\providecommand{\U}[1]{\protect\rule{.1in}{.1in}}
\providecommand{\U}[1]{\protect\rule{.1in}{.1in}}
\providecommand{\U}[1]{\protect\rule{.1in}{.1in}}
\newtheorem{theorem}{Theorem}
\newtheorem{corollary}[theorem]{Corollary}
\newtheorem{lemma}[theorem]{Lemma}
\newtheorem{proposition}[theorem]{Proposition}
\newtheorem{remark}[theorem]{Remark}
\newenvironment{proof}[1][Proof]{\noindent\textbf{#1.} }{\ \rule{0.5em}{0.5em}}
\renewcommand{\thefootnote}{\fnsymbol{footnote}}
\begin{document}

\title{Continuity of the Feynman--Kac formula for a generalized parabolic equation}
\author{Etienne Pardoux$^{a}$, Aurel R\u{a}\c{s}canu$^{b}$ \medskip}
\maketitle

\begin{abstract}
It is well--known since the work of Pardoux and Peng \cite{pa-pe/92} that
Backward Stochastic Differential Equations provide probabilistic formulae for
the solution of (systems of) second order elliptic and parabolic equations,
thus providing an extension of the Feynman--Kac formula to semilinear PDEs,
see also Pardoux and R\u{a}\c{s}canu \cite{pa-ra/14}. This method was applied
to the class of PDEs with a nonlinear Neumann boundary condition first by
Pardoux and Zhang \cite{pa-zh/98}. However, the proof of continuity of the
extended Feynman--Kac formula with respect to $x$ (resp. to $(t,x)$) is not
correct in that paper.

Here we consider a more general situation, where both the equation and the
boundary condition involve the (possibly multivalued) gradient of a convex
function. We prove the required continuity. The result for the class of
equations studied in \cite{pa-zh/98} is a Corollary of our main results.

\end{abstract}
\date{}

\textbf{AMS Classification subjects: }60H30, 60F10, 35D40.\medskip

\textbf{Keywords or phrases: }Backward stochastic differential equations;
Feyman-Kac formula; Subdifferential ope\-rators\renewcommand{\thefootnote}{\fnsymbol{footnote}}

\footnotetext{$^{a}${\scriptsize Aix-Marseille Universit\'{e}, CNRS, Centrale
Marseille, I2M, UMR 7373, 13453 Marseille, France, e-mail adress:
etienne.pardoux@univ-amu.fr}$\smallskip$
\par
$^{b}${\scriptsize "Octav Mayer" Mathematics Institute of the Romanian Academy
/Faculty of Mathematics,"Alexandru Ioan Cuza" University, Carol I Blvd., no.
8/11, Ia\c{s}i, 700506, Romania, e-mail adress: aurel.rascanu@uaic.ro}}

\section{Introduction\label{s1}}

The 1998 paper of Pardoux and Zhang \cite{pa-zh/98} has initiated the topics
of the probabilistic study of semilinear parabolic and elliptic systems of
second order partial differential equations with nonlinear Neumann boundary
condition. The idea is to prove that an associated Backward Stochastic
Differential Equation allows to define a certain function of $(t,x)$ (or in
the elliptic case of $x$ alone), which is continuous, and is a viscosity
solution of a certain system of parabolic or elliptic PDEs. Several papers,
see \cite{re-xi/06, ri/09, di-ou/10, ra-zh/10, re-ot/10, am-mr/13}, have used
the above results

However, the continuity is not really proved in \cite{pa-zh/98}. It is claimed
that it follows from several estimates given in earlier sections of the paper,
but this is not really fair. In \cite{ma-ra/15} Maticiuc and Rascanu give a
proof of the continuity result under some additional assumption. In
\cite{li-ta/13} the continuity is shown in the case where all coefficients are
Lipschitz continuous. The difficulty is that not only the solution of forward
SDE depends upon its starting point $x$ (resp. $(t,x)$), but also its local
time on the boundary, which regulates the reflection.

In this paper, we will give the proof of continuity for a class of problems
which is more general than the one considered in \cite{pa-zh/98}, and deduce
the continuity statements from that paper as a Corollary.

More precisely, the aim of this paper is to prove the continuity of the
function $\left(  t,x\right)  \mapsto Y_{t}^{t,x}\overset{def}{=}u\left(
t,x\right)  =\left(  u_{1}\left(  t,x\right)  ,\ldots,u_{m}\left(  t,x\right)
\right)  ^{\ast}:\left[  0,T\right]  \times\overline{D}\rightarrow
\mathbb{R}^{m}$, candidate for being the viscosity solution of the following
system of partial differential equations with a generalized nonlinear
Robin boundary condition and involving multivalued subdifferential
operators of some lower semicontinuous convex functions $\varphi
,\psi:\mathbb{R}^{m}\rightarrow]-\infty,+\infty]$
\begin{equation}
\left\{
\begin{array}
[c]{l}%
-\dfrac{\partial u(t,x)}{\partial t}-\mathcal{L}_{t}u\left(  t,x\right)
+{\partial\varphi}\big(u(t,x)\big)\ni F\big(t,x,u(t,x),\left(  \nabla
u(t,x)\right)  ^{\ast}g(t,x)\big),\smallskip\\
\multicolumn{1}{r}{t\in\left(  0,T\right)  ,\;x\in D,\medskip}\\
\dfrac{\partial u(t,x)}{\partial n}+{\partial\psi}\big(u(t,x)\big)\ni
G\big(t,x,u(t,x)\big),\\
\multicolumn{1}{r}{t\in\left(  0,T\right)  ,\;x\in Bd\left(  \overline
{D}\right)  ,\medskip}\\
u(T,x)=\kappa(x),\;\ x\in\overline{{D}},
\end{array}
\right.  \label{ch5-pvi-1}%
\end{equation}
where $\mathcal{L}_{t}v$, with $v\in C^{2}\left(  \mathbb{R}^{d}%
,\mathbb{R}^{m}\right)  ,$ is a column vector with the coordinates $\left(
\mathcal{L}_{t}v\right)  _{i}~,$ $i\in\overline{1,m},$ given by
\begin{equation}%
\begin{array}
[c]{ll}%
\left(  \mathcal{L}_{t}v\right)  _{i}(x) & =\dfrac{1}{2}\mathrm{Tr}%
\big[g(t,x)g^{\ast}(t,x)D^{2}v_{i}(x)\big]+\left\langle f(t,x),\nabla
v_{i}(x)\right\rangle \medskip\\
& =\dfrac{1}{2}%
{\displaystyle\sum\limits_{j,l=1}^{d}}
\left(  gg^{\ast}\right)  _{j,l}(t,x)\dfrac{\partial^{2}v_{i}(x)}{\partial
x_{j}\partial x_{l}}+%
{\displaystyle\sum\limits_{j=1}^{d}}
f_{j}\left(  t,x\right)  \dfrac{\partial v_{i}(x)}{\partial x_{j}}%
\end{array}
\label{ch5-pvi-1a}%
\end{equation}
$\nabla u$ is the matrix $d\times m$ with the columns $\nabla u_{i}=\left(
\tfrac{\partial u_{i}}{\partial x_{1}},\ldots,\tfrac{\partial u_{i}}{\partial
x_{d}}\right)  ^{\ast},$ $i\in\overline{1,m}$, and $D$ is an open connected
bounded subset of $\mathbb{R}^{d}$ of the form%
\begin{equation}%
\begin{array}
[c]{ll}%
\left(  i\right)  \quad & D=\left\{  x\in\mathbb{R}^{d}:\phi\left(  x\right)
<0\right\}  ,\;\text{where\ }\phi\in C_{b}^{3}\left(  \mathbb{R}^{d}\right)
,\medskip\\
\left(  ii\right)  \quad & Bd\left(  \overline{D}\right)  =\left\{
x\in\mathbb{R}^{d}:\phi\left(  x\right)  =0\right\}  \;\;\text{and }\medskip\\
& \quad\quad\quad\left\vert \nabla\phi\left(  x\right)  \right\vert
=1\;\forall~x\in Bd\left(  \overline{D}\right)  .
\end{array}
\label{sp-30-ipffi}%
\end{equation}
The outward normal derivative of $u\left(  t,\cdot\right)  $ at the point
$x\in Bd\left(  \overline{D}\right)  $ is the column vector%
\[
\tfrac{\partial u\left(  t,x\right)  }{\partial n}=\left(  \tfrac{\partial
u_{1}\left(  t,x\right)  }{\partial n},\ldots,\tfrac{\partial u_{m}\left(
t,x\right)  }{\partial n}\right)  ^{\ast}%
\]
given by%
\[
\frac{\partial u_{i}\left(  t,x\right)  }{\partial n}=\sum_{j=1}^{d}%
\frac{\partial\phi\left(  x\right)  }{\partial x_{j}}\frac{\partial
u_{i}\left(  t,x\right)  }{\partial x_{j}}=\left(  \nabla u_{i}(t,x)\right)
^{\ast}~\nabla\phi\left(  x\right)  ,\;i\in\overline{1,m};
\]
hence%
\[
\frac{\partial u\left(  t,x\right)  }{\partial n}=\left(  \nabla u\left(
t,x\right)  \right)  ^{\ast}~\nabla\phi\left(  x\right)  .
\]

\section{Assumptions and formulation of the problem\label{s2}}

Consider the stochastic basis $\left(  \Omega,\mathcal{F},\mathbb{P},\left(
\mathcal{F}_{s}^{t}\right)  _{s\geq0}\right)  ,$ where the filtration is
generated by a $k-$di\-men\-si\-o\-nal Brownian motion $\left(  B_{r}\right)
_{r\geq0}$ as follows: $\mathcal{F}_{s}^{t}=\mathcal{N}$ if $0\leq s\leq t$
and
\[
\mathcal{F}_{s}^{t}=\sigma\left\{  B_{r}-B_{t}:t\leq r\leq s\right\}
\vee\mathcal{N},\quad\text{if }s>t,
\]
where $\mathcal{N}$ is the family of $\mathbb{P}-$negligible subsets of
$\Omega.$

Denote $S_{d}^{p}\left[  0,T\right]  ,$ $p\geq0,$ the space of (equivalence
classes of) progressively measurable continuous stochastic processes
$X:\Omega\times\left[  0,T\right]  \rightarrow\mathbb{R}^{d}$ such that~:%
\[
\mathbb{E~}\sup_{t\in\left[  0,T\right]  }\left\vert X_{t}\right\vert
^{p}<+\infty,\text{ if }p>0.
\]
By $\Lambda_{d}^{p}\left(  0,T\right)  ,$ $p\geq0,$ denote the space of
(equivalent classes of) progressively measurable stochastic processes
$X:\Omega\times\left]  0,T\right[  \rightarrow\mathbb{R}^{d}$ such that%
\[
\int_{0}^{T}\left\vert X_{t}\right\vert ^{2}dt<+\infty,\quad\mathbb{P}%
-a.s.\,\,\omega\in\Omega,\quad\text{if }p=0,
\]
and%
\[
\mathbb{E~}\left(  \int_{0}^{T}\left\vert X_{t}\right\vert ^{2}dt\right)
^{p/2}<+\infty,\quad\text{if }p>0.
\]

Let $f\left(  \cdot,\cdot\right)  :\mathbb{R}_{+}\times\mathbb{R}%
^{d}\rightarrow\mathbb{R}^{d}$ and $g\left(  \cdot,\cdot\right)
:\mathbb{R}_{+}\times\mathbb{R}^{d}\rightarrow\mathbb{R}^{d\times k}$ are
continuous functions and satisfy: there exist $\mu_{f}\in\mathbb{R}$ and
$\ell_{g}>0$ such that for all $u,v\in\mathbb{R}^{d}$%
\begin{equation}%
\begin{array}
[c]{rl}%
\left(  i\right)  \quad & \left\langle u-v,f(t,u)-f(t,v)\right\rangle
\,\leq\mu_{f}|u-v|^{2},\medskip\\
\left(  ii\right)  \quad & |g(t,u)-g(t,v)|\leq\ell_{g}|u-v|.
\end{array}
\label{sp-30-ip}%
\end{equation}

By Theorem 4.54 and Corollary 4.56 from Pardoux \& R\u{a}\c{s}canu
\cite{pa-ra/14} we infer that for any $(t,x)\in\lbrack0,T]\times\overline{D}$
fixed, there exists a unique pair $\left(  X^{t,x},A^{t,x}\right)
:\Omega\times\left[  0,\infty\right[  \rightarrow\mathbb{R}^{d}\times
\mathbb{R}$ of continuous progressively measurable stochastic processes such
that, $\mathbb{P}-a.s.:$%
\begin{equation}
\left\{
\begin{array}
[c]{rl}%
\left(  j\right)  \; & X_{s}^{t,x}\in\overline{D}\;\text{and }X_{s\wedge
t}^{t,x}=x\text{ for all }s\geq0,\medskip\\
\left(  jj\right)  \; & 0=A_{u}^{t,x}\leq A_{s}^{t,x}\leq A_{v}^{t,x}\text{
for all }0\leq u\leq t\leq s\leq v,\medskip\\
\left(  jjj\right)  \; & X_{s}^{t,x}+%
{\displaystyle\int_{t}^{s}}
\nabla\phi\left(  X_{r}^{t,x}\right)  dA_{r}^{t,x}=x+%
{\displaystyle\int_{t}^{s}}
f\left(  r,X_{r}^{t,x}\right)  dr\smallskip\\
& \multicolumn{1}{r}{+%
{\displaystyle\int_{t}^{s}}
g\left(  r,X_{r}^{t,x}\right)  dB_{r},\ \;\forall~s\geq t,\medskip}\\
\left(  jv\right)  \; & A_{s}^{t,x}=%
{\displaystyle\int_{t}^{s}}
\mathbf{1}_{Bd\left(  \overline{D}\right)  }\left(  X_{r}^{t,x}\right)
dA_{r}^{t,x}~,\;~\forall~s\geq t.
\end{array}
\right.  \label{ch5-pvi-2}%
\end{equation}
Moreover by (4.112) from \cite{pa-ra/14}
\[%
\begin{array}
[c]{r}%
A_{s}^{t,x}=%
{\displaystyle\int\nolimits_{t}^{s}}
\mathcal{L}_{r}\phi(X_{r}^{t,x})dr+%
{\displaystyle\int\nolimits_{t}^{s}}
\left\langle \nabla\phi(X_{r}^{t,x}),g(r,X_{r}^{t,x})dB_{r}\right\rangle
\smallskip\\
-\left[  \phi(X_{s}^{t,x})-\phi\left(  x\right)  \right]  ,
\end{array}
\]
with $\mathcal{L}_{r}$ defined by (\ref{ch5-pvi-1a}).

For every $p\geq1,$ by Proposition 4.55 and Corollary 4.56 from
\cite{pa-ra/14},
\begin{equation}%
\begin{array}
[c]{rl}%
\left(  j\right)  ~ & \left(  t,x\right)  \mapsto\left(  X_{\cdot}%
^{t,x},A_{\cdot}^{t,x}\right)  :\left[  0,T\right]  \times\overline
{D}\rightarrow S_{d}^{p}\left[  0,T\right]  \times S_{1}^{p}\left[
0,T\right]  \medskip\\
& \quad\quad\quad\quad\quad\text{ }\quad\quad\quad\text{is a continuous
mapping,}\medskip\\
\left(  jj\right)  ~ & \sup\limits_{\left(  t,x\right)  \in\left[  0,T\right]
\times\overline{D}}~\Big(\sup\limits_{s\in\left[  0,T\right]  }\mathbb{E}%
e^{\lambda A_{s}^{t,x}}\Big)\leq\exp\left(  C+C~\lambda^{2}\right)  ,
\end{array}
\label{chy5-pvi-2a}%
\end{equation}
for some $C>0$ and every $\lambda>0$. Moreover for every pair of continuous
functions $h_{1},h_{2}:\left[  0,T\right]  \times\overline{D}\rightarrow
\mathbb{R}$ the mapping%
\[
\left(  t,x\right)  \mapsto\mathbb{E}{%
{\displaystyle\int_{t}^{T}}
h_{1}(s,X_{s}^{t,x})ds+\mathbb{E}%
{\displaystyle\int_{t}^{T}}
h_{2}(s,X_{s}^{t,x})\,dA_{s}^{t,x}:}\left[  0,T\right]  \times\overline
{D}\rightarrow\mathbb{R}%
\]
is a.s. continuous.

By the Kolmogorov criterion (choosing a proper version)%
\begin{equation}%
\begin{array}
[c]{r}%
\left(  t,x,s\right)  \mapsto\left(  X_{s}^{t,x}\left(  \omega\right)
,A_{s}^{t,x}\left(  \omega\right)  \right)  :\left[  0,T\right]
\times\overline{D}\times\left[  0,T\right]  \rightarrow\mathbb{R}^{d}%
\times\mathbb{R}\medskip\\
\text{ is continuous, }\mathbb{P}-a.s.\;\omega\in\Omega
\end{array}
\label{chy5-pvi-2aa}%
\end{equation}
and consequently if $\left(  t_{n},x_{n}\right)  \rightarrow\left(
t,x\right)  ,$ then (based also on (\ref{ch5-pvi-2}-j), the boundedness of
$\overline{D}$ and (\ref{chy5-pvi-2a}-jj)) we infer that for all $q>0,$ as
$n\rightarrow\infty$,%
\begin{equation}
\left\vert X_{t_{n}}^{t_{n},x_{n}}-X_{t}^{t_{n},x_{n}}\right\vert +\left\vert
A_{t_{n}}^{t_{n},x_{n}}-A_{t}^{t_{n},x_{n}}\right\vert \rightarrow
0,\quad\mathbb{P}-a.s.\text{ and in }L^{q}\left(  \Omega,\mathcal{F}%
,\mathbb{P}\right)  . \label{xnanxa}%
\end{equation}
Moreover for all $q>0:$%
\[
\lim_{\delta\searrow0}\mathbb{E}\left[  \sup\left\{  \left\vert X_{r}%
^{t,x}-X_{s}^{t,x}\right\vert ^{q}+\left\vert A_{r}^{t,x}-A_{s}^{t,x}%
\right\vert ^{q}:r,s\in\left[  0,T\right]  ,\ \left\vert r-s\right\vert
\leq\delta\right\}  \right]  =0
\]

Let $T>0$ be fixed. We now consider $\left(  Y_{r}^{t,x},Z_{r}^{t,x}%
,U_{r}^{t,x},V_{r}^{t,x}\right)  _{r\in\left[  t,T\right]  }$ the
$\mathbb{R}^{m}\times\mathbb{R}^{m\times k}\times\mathbb{R}^{m}\times
\mathbb{R}^{m}$-valued stochastic process solution of the backward stochastic
variational inequality (BSVI):%
\[%
\begin{array}
[c]{r}%
-dY_{s}^{t,x}+\partial\varphi\left(  Y_{s}^{t,x}\right)  ds+\partial
\psi\left(  Y_{s}^{t,x}\right)  dA_{s}^{t,x}\ni F\left(  s,X_{s}^{t,x}%
,Y_{s}^{t,x},Z_{s}^{t,x}\right)  ds\medskip\\
+G\left(  s,X_{s}^{t,x},Y_{s}^{t,x}\right)  dA_{s}^{t,x}-Z_{s}^{t,x}%
dB_{s},\quad s\in\lbrack t,T)\text{,\ }d\mathbb{P}\text{-a.s.,}\medskip\\
\multicolumn{1}{l}{Y_{T}^{t,x}=\kappa\left(  X_{T}^{t,x}\right)  ,}%
\end{array}
\]
that is%
\begin{equation}
\left\{
\begin{array}
[c]{l}%
Y_{s}^{t,x}+%
{\displaystyle\int_{s}^{T}}
\left(  U_{r}^{t,x}dr+V_{r}^{t,x}dA_{r}^{t,x}\right)  =\kappa\left(
X_{T}^{t,x}\right)  +%
{\displaystyle\int_{s}^{T}}
F\left(  r,X_{r}^{t,x},Y_{r}^{t,x},Z_{r}^{t,x}\right)  dr,\medskip\\
\quad+%
{\displaystyle\int_{s}^{T}}
G\left(  r,X_{r}^{t,x},Y_{r}^{t,x}\right)  dA_{r}^{t,x}-%
{\displaystyle\int_{s}^{T}}
Z_{r}^{t,x}dB_{r},\ \forall~s\in\left[  t,T\right]  ,\;d\mathbb{P}%
\text{-a.s.,}\medskip\\%
{\displaystyle\int_{u}^{v}}
\left\langle U_{r}^{t,x},S_{r}-Y_{r}^{t,x}\right\rangle dr+%
{\displaystyle\int_{u}^{v}}
\varphi\left(  Y_{r}^{t,x}\right)  dr\leq%
{\displaystyle\int_{u}^{v}}
\varphi\left(  S_{r}\right)  dr,\quad d\mathbb{P}\text{-a.s. on }\Omega,\text{
}\smallskip\\
\quad\quad\quad\quad\text{for all }u,v\in\left[  t,T\right]  \text{, }u\leq
v,\text{ for all continuous stochastic process }S;\medskip\\%
{\displaystyle\int_{u}^{v}}
\left\langle V_{r}^{t,x},S_{r}-Y_{r}^{t,x}\right\rangle dA_{r}^{t,x}+%
{\displaystyle\int_{u}^{v}}
\psi\left(  Y_{r}^{t,x}\right)  dA_{r}^{t,x}\leq%
{\displaystyle\int_{u}^{v}}
\psi\left(  S_{r}\right)  dA_{r}^{t,x},\quad d\mathbb{P}\text{-a.s. on }%
\Omega,\smallskip\\
\quad\quad\quad\quad\text{for all }u,v\in\left[  t,T\right]  \text{, }u\leq
v,\text{ for all continuous stochastic process }S.
\end{array}
\right.  \label{ch5-pvi-3}%
\end{equation}
where $F:\mathbb{R}_{+}\times\overline{{D}}\times\mathbb{R}^{m}\times
\mathbb{R}^{m\times k}\rightarrow\mathbb{R}^{m}$, $G:\mathbb{R}_{+}%
\times\overline{D}\times\mathbb{R}^{m}\rightarrow\mathbb{R}^{m}$ and
$\kappa:\overline{{D}}\rightarrow\mathbb{R}^{m}$ are continuous. Assume that
there exist $b_{F},b_{G},\ell_{F}>0$ and $\mu_{F},\mu_{G}\in\mathbb{R}$ (which
can depend on $T$) such that $\forall t\in\left[  0,T\right]  $, $\forall
x\in\overline{{D}}$, $y,\tilde{y}\in\mathbb{R}^{m}$, $z,\tilde{z}\in
\mathbb{R}^{m\times k}$:%
\begin{equation}%
\begin{array}
[c]{rl}%
\left(  i\right)  \text{\ \ } & \left\langle y-\tilde{y}%
,F(t,x,y,z)-F(t,x,\tilde{y},z)\right\rangle \leq\mu_{F}|y-\tilde{y}%
|^{2},\vspace*{2mm}\\
\left(  ii\right)  \text{\ \ } & \left\vert F(t,x,y,z)-F(t,x,y,\tilde
{z})\right\vert \leq\ell_{F}|z-\tilde{z}|,\vspace*{2mm}\\
\left(  iii\right)  \text{\ \ } & \left\vert F(t,x,y,0)\right\vert \leq
b_{F}\left(  1+|y|\right)  ,\vspace*{2mm}\\
\left(  iv\right)  \text{\ \ } & \left\langle y-\tilde{y}%
,G(t,x,y)-G(t,x,\tilde{y})\right\rangle \leq\mu_{G}|y-\tilde{y}|^{2}%
,\vspace*{2mm}\\
\left(  v\right)  \text{\ \ } & \left\vert G(t,x,y)\right\vert \leq
b_{G}\left(  1+|y|\right)  .
\end{array}
\label{h3-0}%
\end{equation}
We also assume that
\begin{equation}%
\begin{array}
[c]{ll}%
\left(  i\right)  \text{\ \ } & \varphi,\psi:\mathbb{R}^{m}\rightarrow
(-\infty,+\infty]\text{ \ are proper convex l.s.c. functions }\medskip\\
\left(  ii\right)  \text{\ \ } & \exists u_{0}\in int\left(  Dom\left(
\varphi\right)  \right)  \cap int\left(  Dom\left(  \psi\right)  \right)
\text{ such that}\medskip\\
& \quad\quad\quad\quad\quad\quad\varphi\left(  y\right)  \geq\varphi\left(
u_{0}\right)  \text{ and }\psi\left(  y\right)  \geq\psi\left(  u_{0}\right)
,\ \forall\;y\in\mathbb{R}.
\end{array}
\label{h4-0}%
\end{equation}
where $Dom\left(  \varphi\right)  =\left\{  y\in\mathbb{R}^{m}:\varphi\left(
y\right)  <\infty\right\}  $ and similarly for $Dom\left(  \psi\right)  .$

We also introduce some \textit{compatibility conditions }: \newline there
exists $M>0$ such that%
\begin{equation}%
\begin{array}
[c]{cc}%
\left(  a\right)  \quad & \sup\limits_{x\in\overline{D}}\left\vert
\varphi\left(  \kappa(x)\right)  \right\vert +\sup\limits_{x\in\overline{D}%
}\left\vert \psi\left(  \kappa(x)\right)  \right\vert =M<\infty
\end{array}
\label{ca1}%
\end{equation}
and there exists $c>0$ such that for all\textit{ }$\varepsilon>0$,
$t\in\left[  0,T\right]  $, $x\in\overline{D}$, $y\in\mathbb{R}^{m}$,
$z\in\mathbb{R}^{m\times k},$%
\begin{equation}%
\begin{array}
[c]{ll}%
\left(  b\right)  \quad & \left\langle \nabla\varphi_{\varepsilon}\left(
y\right)  ,\nabla\psi_{\varepsilon}\left(  y\right)  \right\rangle
\geq0,\medskip\\
\left(  d\right)  \quad & \left\langle \nabla\varphi_{\varepsilon}\left(
y\right)  ,G\left(  t,x,y\right)  \right\rangle \leq c\left\vert \nabla
\psi_{\varepsilon}\left(  y\right)  \right\vert \left[  1+\left\vert G\left(
t,x,y\right)  \right\vert \right]  ,\medskip\\
\left(  e\right)  \quad & \left\langle \nabla\psi_{\varepsilon}\left(
y\right)  ,F\left(  t,x,y,z\right)  \right\rangle \leq c\left\vert
\nabla\varphi_{\varepsilon}\left(  y\right)  \right\vert \left[  1+\left\vert
F\left(  t,x,y,z\right)  \right\vert \right]  ,\medskip\\
\left(  f\right)  \quad & -\left\langle \nabla\varphi_{\varepsilon}\left(
y\right)  ,G\left(  t,x,u_{0}\right)  \right\rangle \leq c\left\vert
\nabla\psi_{\varepsilon}\left(  y\right)  \right\vert \left[  1+\left\vert
G\left(  t,x,u_{0}\right)  \right\vert \right]  ,\medskip\\
\left(  g\right)  \quad & -\left\langle \nabla\psi_{\varepsilon}\left(
y\right)  ,F\left(  t,x,u_{0},0\right)  \right\rangle \leq c\left\vert
\nabla\varphi_{\varepsilon}\left(  y\right)  \right\vert \left[  1+\left\vert
F\left(  t,x,u_{0},0\right)  \right\vert \right]
\end{array}
\label{compatib. assumption}%
\end{equation}
where $\nabla\varphi_{\varepsilon}\left(  y\right)  $, $\nabla\psi
_{\varepsilon}\left(  y\right)  $ are the unique solutions $u$ and $v$,
respectively, of equations%
\[
{\partial\varphi}(y-\varepsilon u)\ni u\;\;\;\text{and \ \ \ }{\partial\psi
}(y-\varepsilon v)\ni v.
\]
(the Moreau-Yosida approximations: see the Annex below).

We remark that the compatibility assumptions are satisfied if, for example,

\begin{itemize}
\item[$\left(  a\right)  $] $\varphi=\psi,$ \newline or in the one dimensional
case (i.e. $m=1$)

\item[$\left(  b\right)  $] If $\varphi,\psi:\mathbb{R}\rightarrow
(-\infty,+\infty]$ are the convex indicator functions
\[
\varphi\left(  y\right)  =\left\{
\begin{array}
[c]{rl}%
0, & \text{if\ }y\in\lbrack a,\infty),\smallskip\\
+\infty, & \text{if\ }y\notin\lbrack a,\infty),
\end{array}
\right.  \;\text{and\ }\psi\left(  y\right)  =\left\{
\begin{array}
[c]{rl}%
0, & \text{if\ }y\in(-\infty,b],\smallskip\\
+\infty, & \text{if\ }y\notin(-\infty,b],
\end{array}
\right.
\]
where $-\infty\leq a<b\leq+\infty$, then
\[
\nabla\varphi_{\varepsilon}\left(  y\right)  =\displaystyle\frac{-\left(
a-y\right)  ^{+}}{\varepsilon}\;\text{and\ }\nabla\psi_{\varepsilon}\left(
y\right)  =\displaystyle\frac{\left(  y-b\right)  ^{+}}{\varepsilon}\,.
\]
In this case the compatibility assumptions (\ref{compatib. assumption}) are
satisfied in particular if there exists $u_{0}\in\left(  a,b\right)  $ such
that for all $\left(  t,x\right)  \in\left[  0,T\right]  \times\overline{D}$
and for all $z\in\mathbb{R}^{1\times k}$ :%
\[%
\begin{array}
[c]{l}%
G\left(  t,x,y\right)  \geq0,\;\text{for all }y<a,\medskip\\
F\left(  t,x,y,z\right)  \leq0,\text{ for all }y>b,\medskip\\
G\left(  t,x,u_{0}\right)  \leq0\quad\text{and }\quad F\left(  t,x,u_{0}%
,0\right)  \geq0,
\end{array}
\]

\end{itemize}

Remark that the backward stochastic variational inequality (\ref{ch5-pvi-3})
satisfies the assumptions of Theorem 5.69 from \cite{pa-ra/14} Therefore
(\ref{ch5-pvi-3}) has a unique progressively measurable solution $\left(
Y^{t,x},Z^{t,x},U^{t,x},V^{t,x}\right)  ,$ with $Y^{t,x}$ having continuous
trajectories, such that for all $\lambda\geq0,$ $\left(  t,x\right)
\in\left[  0,T\right]  \times\overline{D},$%
\[
\mathbb{E}\sup\limits_{r\in\left[  t,T\right]  }e^{2\lambda A_{r}^{t,x}%
}\left\vert Y_{r}^{t,x}\right\vert ^{2}+\mathbb{E}\left(
{\displaystyle\int_{t}^{T}}
e^{2\lambda A_{r}^{t,x}}\left\vert Z_{r}^{t,x}\right\vert ^{2}dr\right)
<\infty.
\]

We extend the stochastic processes from (\ref{ch5-pvi-3}) on $\left[
0,t\right]  $ by the deterministic solution of the following backward
"stochastic" variational inequality ($F=0$, $G=0$) (which again has a unique
solution)
\begin{equation}
\left\{
\begin{array}
[c]{l}%
\begin{array}
[c]{l}%
A_{s}^{t,x}=0\text{, }Z_{s}^{t,x}=0\text{, }\forall~s\in\left[  0,t\right]
,\medskip\\
Y_{s}^{t,x}+%
{\displaystyle\int_{s}^{t}}
U_{r}^{t,x}dr+%
{\displaystyle\int_{s}^{t}}
V_{r}^{t,x}dr=Y_{t}^{t,x}\text{, }\forall~s\in\left[  0,t\right]  ,\medskip\\
U_{r}^{t,x}\in\partial\varphi\left(  Y_{r}^{t,x}\right)  \text{ and }%
V_{r}^{t,x}\in\partial\psi\left(  Y_{r}^{t,x}\right)  \quad\text{a.e. on
}\left[  0,t\right]  .
\end{array}
\end{array}
\right.  \label{extend}%
\end{equation}
Now we can write (\ref{ch5-pvi-3}) as follows%
\begin{equation}
\left\{
\begin{array}
[c]{l}%
Y_{s}^{t,x}+%
{\displaystyle\int_{s}^{T}}
\left(  U_{r}^{t,x}dr+V_{r}^{t,x}dA_{r}^{t,x}\right)  =\kappa\left(
X_{T}^{t,x}\right)  +%
{\displaystyle\int_{s}^{T}}
\mathbf{1}_{\left[  t,T\right]  }\left(  r\right)  ~F\left(  r,X_{r}%
^{t,x},Y_{r}^{t,x},Z_{r}^{t,x}\right)  dr\\
\multicolumn{1}{r}{+%
{\displaystyle\int_{s}^{T}}
\mathbf{1}_{\left[  t,T\right]  }\left(  r\right)  G\left(  r,X_{r}%
^{t,x},Y_{r}^{t,x}\right)  dA_{r}^{t,x}-%
{\displaystyle\int_{s}^{T}}
Z_{r}^{t,x}dB_{r},\ \forall~s\in\left[  0,T\right]  ,\quad\medskip}\\%
{\displaystyle\int_{u}^{v}}
U_{r}^{t,x}\left(  S_{r}-Y_{r}^{t,x}\right)  dr+%
{\displaystyle\int_{u}^{v}}
\varphi\left(  Y_{r}^{t,x}\right)  dr\leq%
{\displaystyle\int_{u}^{v}}
\varphi\left(  S_{r}\right)  dr,\quad d\mathbb{P}\text{-a.s on }\Omega,\text{
}\smallskip\\
\quad\quad\text{for all }u,v\in\left[  0,T\right]  \text{, }u\leq v,\text{ for
any }\mathbb{R}^{m}\text{-valued continuous stochastic process }S;\medskip\\%
{\displaystyle\int_{u}^{v}}
V_{r}^{t,x}\left(  S_{r}-Y_{r}^{t,x}\right)  dA_{r}^{t,x}+%
{\displaystyle\int_{u}^{v}}
\psi\left(  Y_{r}^{t,x}\right)  dA_{r}^{t,x}\leq%
{\displaystyle\int_{u}^{v}}
\psi\left(  S_{r}\right)  dA_{r}^{t,x},\quad d\mathbb{P}\text{-a.s on }%
\Omega,\smallskip\\
\quad\quad\text{for any }u,v\in\left[  0,T\right]  \text{, }u\leq v,\text{ for
all }\mathbb{R}^{m}\text{-valued continuous stochastic process }S;\\
.
\end{array}
\right.  \label{ch5-pvi-3a}%
\end{equation}
(since in particular it is plain that $A_{s}^{t,x}=0$, $\forall~s\in\left[
0,t\right]  $).

If we denote
\[
K_{s}^{t,x}=%
{\displaystyle\int_{0}^{s}}
\left(  U_{r}^{t,x}dr+V_{r}^{t,x}dA_{r}^{t,x}\right)  ,\quad\forall
~s\in\left[  0,T\right]  ,
\]
then as measures on $\left[  0,T\right]  $ we have%
\[
dK_{r}^{t,x}=U_{r}^{t,x}dr+V_{r}^{t,x}dA_{r}^{t,x}\in\partial\varphi\left(
Y_{r}^{t,x}\right)  dr+\partial\psi\left(  Y_{r}^{t,x}\right)  dA_{r}%
^{t,x}\text{ }%
\]
and from the monotonicity of the subdifferential operators we have for all
$\left(  t,x\right)  ,\left(  \tau,y\right)  \in\left[  0,T\right]
\times\overline{D},$%
\begin{equation}
\left\langle Y_{r}^{t,x}-Y_{r}^{\tau,y},dK_{r}^{t,x}-dK_{r}^{\tau
,y}\right\rangle \geq0,\;\text{as measure on }\left[  0,T\right]  .
\label{mon_yk}%
\end{equation}

We highlight (see \cite{ma-ra/15b},  or \cite{pa-ra/14} Proposition 5.46) that
for every $p\geq2$ there exists a positive constant $\hat{C}_{p}$
\textit{depending only upon} $p$ such that for all $t\in\left[  0,T\right]  $,
$x\in\overline{D}$, $s\in\left[  t,T\right]  $ and $\lambda\geq\max\left\{
\left(  \mu_{F}+\ell_{F}^{2}\right)  ,\mu_{G}\right\}  $
\begin{equation}%
\begin{split}
\mathbb{E~} &  \sup\limits_{r\in\left[  0,T\right]  }e^{p\lambda(r+A_{r}%
^{t,x})}\left\vert Y_{r}^{t,x}-u_{0}\right\vert ^{p}+\mathbb{E~}\left(
{\displaystyle\int_{0}^{T}}e^{2\lambda(r+A_{r}^{t,x})}\left\vert Z_{r}%
^{t,x}\right\vert ^{2}dr\right)  ^{p/2}\\
&  \ \quad+\mathbb{E~}\left(  {{\displaystyle\int_{0}^{T}}}e^{2\lambda
(r+A_{r}^{t,x})}\left[  \varphi\left(  Y_{r}^{t,x}\right)  -\varphi\left(
u_{0}\right)  \right]  dr\right)  ^{p/2}\\
&  \ \quad+\mathbb{E~}\left(  {{\displaystyle\int_{0}^{T}}}e^{2\lambda
(r+A_{r}^{t,x})}\left[  \psi\left(  Y_{r}^{t,x}\right)  -\psi\left(
u_{0}\right)  \right]  dA_{r}^{t,x}\right)  ^{p/2}\\
&  \leq\hat{C}_{p}~\mathbb{E}\bigg[e^{p\lambda(T+A_{T}^{t,x})}\left\vert
\kappa\left(  X_{T}^{t,x}\right)  -u_{0}\right\vert ^{p}\\
&  \ \quad+\Big({{\displaystyle\int_{0}^{T}}}e^{\lambda(r+A_{r}^{t,x}%
)}\left\vert F\left(  r,X_{r}^{t,x},u_{0},0\right)  \right\vert dr\Big)^{p}\\
&  \ \quad+\Big({{\displaystyle\int_{0}^{T}}}e^{\lambda(r+A_{r}^{t,x}%
)}\left\vert G\left(  r,X_{r}^{t,x},u_{0}\right)  \right\vert dA_{r}%
^{t,x}\Big)^{p}\bigg].
\end{split}
\label{ch5-pvi-4}%
\end{equation}

Since $\left[  0,T\right]  \times\overline{D}$ is bounded, $X_{r}^{t,x}%
\in\overline{D}$ for all $r\in\left[  0,T\right]  $ and the functions
$\kappa,$ $F$ and $G$ are continuous, there exists a constant $C_{1}$
\textit{independent of }$\left(  t,x\right)  $ such that for all $r\in\left[
0,T\right]  $
\begin{equation}
\left\vert \kappa\left(  X_{T}^{t,x}\right)  \right\vert +\left\vert F\left(
r,X_{r}^{t,x},u_{0},0\right)  \right\vert +\left\vert G\left(  r,X_{r}%
^{t,x},u_{0}\right)  \right\vert \leq C_{1},\quad\mathbb{P}-a.s.\; \label{bk}%
\end{equation}
Taking in account the estimate (\ref{chy5-pvi-2a}-jj) we have that for
every$\lambda\geq\left(  \mu_{F}+\ell_{F}^{2}\right)  \vee\mu_{G}$ and $p>0$
there exists a constant $C_{2}$ \textit{independent of }$\left(  t,x\right)  $
such that
\begin{equation}%
\begin{array}
[c]{l}%
\mathbb{E}\sup\limits_{r\in\left[  0,T\right]  }e^{p\lambda(r+A_{r}^{t,x}%
)}\left\vert Y_{r}^{t,x}\right\vert ^{p}+\mathbb{E~}\left(
{\displaystyle\int_{0}^{T}}
e^{2\lambda(r+A_{r}^{t,x})}\left\vert Z_{r}^{t,x}\right\vert ^{2}dr\right)
^{p/2}\\
\quad+\mathbb{E~}\left(  {%
{\displaystyle\int_{0}^{T}}
}e^{2\lambda(r+A_{r}^{t,x})}\varphi\left(  Y_{r}^{t,x}\right)  dr\right)
^{p/2}\\
\quad+\mathbb{E~}\left(  {%
{\displaystyle\int_{0}^{T}}
}e^{2\lambda(r+A_{r}^{t,x})}\left\vert \psi\left(  Y_{r}^{t,x}\right)
\right\vert dA_{r}^{t,x}\right)  ^{p/2}\\
\leq C_{2}%
\end{array}
\label{ch5-pvi-4a}%
\end{equation}
Moreover for another constant $C_{3}$ \textit{independent of }$\left(
t,x\right)  $ we have
\begin{equation}
\mathbb{E}\Big(%
{\displaystyle\int_{0}^{T}}
e^{2\lambda(r+A_{r}^{t,x})}\left\vert U_{r}^{t,x}\right\vert ^{2}%
dr\Big)+\mathbb{E}\Big(%
{\displaystyle\int_{0}^{T}}
e^{2\lambda(r+A_{r}^{t,x})}\left\vert V_{r}^{t,x}\right\vert ^{2}dA_{r}%
^{t,x}\Big)\leq C_{3} \label{ch5-pvi-5}%
\end{equation}
Since $\left\vert G(t,x,y)\right\vert \leq b_{G}\left(  1+|y|\right)  $ and
$\left\vert F(t,x,y,z)\right\vert \leq\ell_{F}|z|+b_{F}\left(  1+|y|\right)
,$ then every $p>0$ there exists a positive constant $C_{4}$
\textit{independent of} $r,s,t,\tau,\theta\in\left[  0,T\right]  $
\textit{and} $x,y,z\in\overline{D}$ such that%
\begin{equation}%
\begin{array}
[c]{l}%
\mathbb{E~}\left(
{\displaystyle\int_{0}^{T}}
e^{2\lambda(r+A_{r}^{t,x})}\left\vert F\left(  r,X_{r}^{,t,x},Y_{r}^{\tau
,y},Z_{r}^{\tau,y}\right)  \right\vert ^{2}dr\right)  ^{p}\\
\quad+\mathbb{E~}\left(
{\displaystyle\int_{0}^{T}}
e^{2\lambda(r+A_{r}^{t,x})}\left\vert G\left(  r,X_{r}^{,t,x},Y_{r}^{\tau
,y}\right)  \right\vert ^{2}dA_{r}^{t,x}\right)  ^{p}\leq C_{4}%
\end{array}
\label{ch5-pvi-5a}%
\end{equation}

It is clear that the inequalities (\ref{ch5-pvi-4a}), (\ref{ch5-pvi-5}) and
(\ref{ch5-pvi-5a}) are satisfied for all $\lambda\geq0.$

We define
\begin{equation}
u(t,x)=Y_{t}^{t,x},\ \ \ (t,x)\in\lbrack0,T]\times\overline{D},
\label{ch5-pvi-6}%
\end{equation}
which is a deterministic quantity since $Y_{t}^{t,x}$ is $\mathcal{F}_{t}%
^{t}\equiv\mathcal{N}$--measurable. In the next section we shall prove that
$\left(  t,x\right)  \mapsto u(t,x):[0,T]\times\overline{D}\rightarrow
\mathbb{R}^{m}$ is a continuous function

We remark that from the Markov property, we have%
\[
u(s,X_{s}^{t,x})=Y_{s}^{t,x}.
\]

\begin{remark}
\label{rm} We note that in the particular case where $\varphi=\psi\equiv0$, we
are in the situation which was studied in \cite{pa-zh/98}.
\end{remark}

\section{Continuity\label{s3}}

We present here the main result of this paper. The proof will rely upon
several Lemmas which will be proved later in this section.

\begin{theorem}
\label{th:main} \label{p1}Under the above assumptions, the mapping $\left(
t,x\right)  \mapsto u\left(  t,x\right)  =Y_{t}^{t,x}:[0,T]\times
\overline{\mathcal{D}}\rightarrow\mathbb{R}^{m}$ is continuous.
\end{theorem}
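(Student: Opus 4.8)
The plan is to show continuity of $(t,x)\mapsto u(t,x)=Y_t^{t,x}$ by establishing the stronger statement that $(t,x)\mapsto (Y^{t,x},Z^{t,x},K^{t,x})$ is continuous in suitable norms, and then specializing to $s=t$. Fix $(t,x)$ and a sequence $(t_n,x_n)\to(t,x)$; without loss of generality assume $t_n\le t$ (the case $t_n>t$ is handled by the same estimates together with \eqref{extend} and the fact that $A^{t,x}$ vanishes on $[0,t]$). The starting point is the continuity of the forward system already recorded in \eqref{chy5-pvi-2a}, \eqref{chy5-pvi-2aa}, \eqref{xnanxa}: we have $X^{t_n,x_n}\to X^{t,x}$ in every $S_d^p[0,T]$ and $A^{t_n,x_n}\to A^{t,x}$ in every $S_1^p[0,T]$, together with the uniform exponential moment bound \eqref{chy5-pvi-2a}-(jj) and the uniform estimates \eqref{ch5-pvi-4a}--\eqref{ch5-pvi-5a} on $(Y,Z,U,V)$, all independent of the parameter.

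Next I would write Itô's formula for $e^{2\lambda(r+A_r)}|Y_r^{t_n,x_n}-Y_r^{t,x}|^2$ — where one has to be careful to use a common clock, say $A_r^{t,x}$, or better the sum $A_r^{t_n,x_n}+A_r^{t,x}$, in the exponential weight, since the two solutions are driven by different increasing processes. The monotonicity inequality \eqref{mon_yk} (extended to the pair $(t_n,x_n),(t,x)$) kills the contribution of $dK^{t_n,x_n}-dK^{t,x}$ up to the mismatch between $dA^{t_n,x_n}$ and $dA^{t,x}$; the monotonicity assumptions \eqref{h3-0}-(i),(iv) on $F$ and $G$ together with the Lipschitz bounds \eqref{h3-0}-(ii) absorb the $Y$- and $Z$-differences for $\lambda$ large enough. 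What remains on the right-hand side is a sum of \emph{driving terms}: (a) the terminal mismatch $|\kappa(X_T^{t_n,x_n})-\kappa(X_T^{t,x})|^2$, which tends to $0$ in $L^p$ by continuity of $\kappa$, \eqref{bk} and dominated convergence; (b) terms of the form $|F(r,X_r^{t_n,x_n},Y_r^{t,x},Z_r^{t,x})-F(r,X_r^{t,x},Y_r^{t,x},Z_r^{t,x})|$ and similarly for $G$, which go to $0$ because $F,G$ are continuous, $X_r^{t_n,x_n}\to X_r^{t,x}$, and the sequence is uniformly integrable by \eqref{ch5-pvi-5a}; and (c) genuinely delicate terms coming from the difference of the measures $dr$ versus $d A_r$, i.e. integrals against $d(A_r^{t_n,x_n}-A_r^{t,x})$ of bounded or uniformly integrable integrands, which are controlled using the continuity in \eqref{chy5-pvi-2a}-(j), the uniform exponential moment bound, and the "moreover" statement just after \eqref{chy5-pvi-2aa} (a.s. continuity of $(t,x)\mapsto \mathbb{E}\int h\,dA^{t,x}$). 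Gronwall's inequality applied to the resulting relation then yields $\mathbb{E}\sup_{r\in[0,T]}|Y_r^{t_n,x_n}-Y_r^{t,x}|^2\to 0$, and in particular (taking $r=t_n$ and combining with \eqref{xnanxa} and the modulus-of-continuity statement for $Y$) $Y_{t_n}^{t_n,x_n}-Y_t^{t,x}\to 0$, i.e. $u(t_n,x_n)\to u(t,x)$.

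I expect the main obstacle to be exactly the terms of type (c): controlling differences of stochastic and Lebesgue--Stieltjes integrals with respect to the two distinct reflecting local times $A^{t_n,x_n}$ and $A^{t,x}$, since the integrands themselves ($V_r^{t,x}$, $G(r,X_r,Y_r)$, the exponential weights) depend on the parameter and are only $L^2$, not bounded. The key technical device is to replace the convex functions $\varphi,\psi$ by their Moreau--Yosida approximations $\varphi_\varepsilon,\psi_\varepsilon$, so that $U,V$ are replaced by $\nabla\varphi_\varepsilon(Y),\nabla\psi_\varepsilon(Y)$ with good continuity and the compatibility conditions \eqref{compatib. assumption} become available to trade $\nabla\varphi_\varepsilon$ against $\nabla\psi_\varepsilon$; one then passes to the limit $\varepsilon\to 0$ using the uniform bound \eqref{ch5-pvi-5}. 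This is where the Lemmas announced before the theorem come in, and organizing the $\varepsilon$-approximation and the $n\to\infty$ limit so that they interchange cleanly — uniformly in the parameter — is the heart of the argument.
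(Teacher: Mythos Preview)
Your overall architecture is right --- take a sequence $(t_n,x_n)\to(t,x)$, compare the two BSVIs by an energy inequality with exponential weight, use the monotonicity of $\partial\varphi,\partial\psi$ on the $dK$ terms, the monotonicity/Lipschitz assumptions on $F,G$ for the drift, and reduce to driving terms that should go to zero. You also correctly single out the term integrated against $d(A^{t_n,x_n}_r-A^{t,x}_r)$ as the real obstacle. But the device you propose for that obstacle is the wrong one, and the device the paper actually uses is absent from your plan.

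First, the Moreau--Yosida approximation is not needed here. In the paper the monotonicity relation \eqref{mon_yk} is used as is: $\langle Y^n_r-Y_r,\,dK_r-dK^n_r\rangle\le 0$ with no residual ``mismatch'' term, so the subdifferential part simply disappears from the energy inequality. The compatibility conditions \eqref{compatib. assumption} and the Yosida regularization enter only in establishing existence of the solution and the a~priori bounds \eqref{ch5-pvi-4a}--\eqref{ch5-pvi-5}, not in the continuity argument. Bringing them back to ``trade $\nabla\varphi_\varepsilon$ against $\nabla\psi_\varepsilon$'' does nothing for the term that is actually problematic.

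Second, the genuinely delicate term that survives is
\[
R^{(n)}_{\cdot}=\int_0^{\cdot}\big\langle Y^n_r-Y_r,\ \mathbf{1}_{[t,T]}(r)\,G(r,X_r,Y_r)\big\rangle\,d\big(A^n_r-A_r\big),
\]
and your proposed tool --- the ``moreover'' statement after \eqref{chy5-pvi-2aa} about continuity of $(t,x)\mapsto\mathbb{E}\int h(s,X_s^{t,x})\,dA_s^{t,x}$ --- does not apply: that statement is for integrands of the form $h(s,X_s)$ with $h$ a fixed continuous function, whereas here the integrand contains the unknown $Y^n_r-Y_r$ and the process $Y_r$. The paper's key idea, which your plan misses, is a \emph{time-discretization}: freeze $Y^n,Y,G$ on a uniform partition $\pi_N$ so that on each subinterval the integrand is constant and the signed measure contributes only the increments $A^n_{r_i}-A_{r_i}$, which go to zero; then control separately the error $\mathbb{E}\int_0^T|Y^n_r-Y^{n,N}_r|^q(dA^n_r+dA_r)$ produced by the freezing, uniformly in $n$, via the bounds \eqref{ch5-pvi-4a}--\eqref{ch5-pvi-5a} and the oscillation of $A$. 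This is exactly the content of Lemmas~\ref{3} and~\ref{4}, and without it the argument does not close. (Gronwall is also not the right closing step here; the paper uses the backward stochastic inequality of Lemma~\ref{An_L1} with weight $e^{2\lambda(r+A_r^n)}$, which already absorbs the linear $|Y^n-Y|^2$ terms for $\lambda\ge(\mu_F+\ell_F^2)\vee\mu_G$.)
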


\begin{proof}
Let $\left(  t_{n},x_{n}\right)  _{n\ge1} ,\left(  t,x\right)  \in\left[
0,T\right]  \times\overline{D}$ be such that $\left(  t_{n},x_{n}\right)
\rightarrow\left(  t,x\right)  $, as $n\to\infty$.

Denote $\Theta_{s}^{n}=\Theta_{s}^{t_{n},x_{n}}$ and $\Theta_{s}=\Theta
^{0}_{s}=\Theta_{s}^{t,x}$ for $\Theta=X,A,Y,Z,U,V,K.$ From (\ref{ch5-pvi-4a})
and the continuity of the trajectories of $Y^{n}$, for all $q>0$, $n\ge0$,
\[
\lim_{\delta\searrow0}\mathbb{E}\left[  \sup\left\{  \left\vert Y_{r}%
^{n}-Y_{s}^{n}\right\vert ^{q}:r,s\in\left[  0,T\right]  ,\ \left\vert
r-s\right\vert \leq\delta\right\}  \right]  =0.
\]

We have
\[
Y_{s}^{n}-Y_{s}=\kappa\left(  X_{T}^{n}\right)  -\kappa\left(  X_{T}\right)
+\int_{s}^{T}d\mathcal{K}_{r}^{n}-\int_{s}^{T}\left(  Z_{r}^{n}-Z_{r}\right)
dB_{r}%
\]
where%
\begin{align*}
d\mathcal{K}_{r}^{n} &  =d\left(  K_{r}-K_{r}^{n}\right)  \\
&  +\Big[\mathbf{1}_{\left[  t_{n},T\right]  }\left(  r\right)  ~F\left(
r,X_{r}^{n},Y_{r}^{n},Z_{r}^{n}\right)  -\mathbf{1}_{\left[  t,T\right]
}\left(  r\right)  ~F\left(  r,X_{r},Y_{r},Z_{r}\right)  \Big]dr\\
&  +\Big[\mathbf{1}_{\left[  t_{n},T\right]  }\left(  r\right)  ~G\left(
r,X_{r}^{n},Y_{r}^{n}\right)  dA_{r}^{n}-\mathbf{1}_{\left[  t,T\right]
}\left(  r\right)  ~G\left(  r,X_{r},Y_{r}\right)  dA_{r}\Big].
\end{align*}
with $dK_{r}^{n}=U_{r}^{n}dr+V_{r}^{n}dA_{r}^{n}\in\partial\varphi\left(
Y_{r}^{n}\right)  dr+$ $\partial\psi\left(  Y_{r}^{n}\right)  dA_{r}^{n}$ and
$dK_{r}=U_{r}dr+V_{r}dA_{r}\in\partial\varphi\left(  Y_{r}\right)  dr+$
$\partial\psi\left(  Y_{r}\right)  dA_{r}$. Remark that by (\ref{mon_yk}) it
holds%
\[
\left\langle Y_{r}^{n}-Y_{r},dK_{r}-dK_{r}^{n}\right\rangle \leq
0,\quad\text{as a signed measure on }\left[  0,T\right]  .
\]
It is easy to verify that:%
\begin{align*}
&  \left\langle Y_{r}^{n}-Y_{r},\mathbf{1}_{\left[  t_{n},T\right]  }\left(
r\right)  ~F\left(  r,X_{r}^{n},Y_{r}^{n},Z_{r}^{n}\right)  -\mathbf{1}%
_{\left[  t,T\right]  }\left(  r\right)  ~F\left(  r,X_{r},Y_{r},Z_{r}\right)
\right\rangle dr\medskip\medskip\\
&  \leq\left\langle Y_{r}^{n}-Y_{r},\mathbf{1}_{\left[  t_{n},T\right]
}\left(  r\right)  \left[  F\left(  r,X_{r}^{n},Y_{r}^{n},Z_{r}^{n}\right)
-F\left(  r,X_{r}^{n},Y_{r}^{n},Z_{r}\right)  \right]  \right\rangle
dr\medskip\\
&  +\left\langle Y_{r}^{n}-Y_{r},\mathbf{1}_{\left[  t_{n},T\right]  }\left(
r\right)  \left[  F\left(  r,X_{r}^{n},Y_{r}^{n},Z_{r}\right)  -F\left(
r,X_{r}^{n},Y_{r},Z_{r}\right)  \right]  \right\rangle dr\medskip\\
&  +\left\langle Y_{r}^{n}-Y_{r},\mathbf{1}_{\left[  t_{n},T\right]  }\left(
r\right)  F\left(  r,X_{r}^{n},Y_{r},Z_{r}\right)  -\mathbf{1}_{\left[
t,T\right]  }\left(  r\right)  F\left(  r,X_{r},Y_{r},Z_{r}\right)
\right\rangle dr\medskip\medskip\\
&  \leq\ell_{F}\left\vert Y_{r}^{n}-Y_{r}\right\vert \left\vert Z_{r}%
^{n}-Z_{r}\right\vert dr+\mu_{F}\left\vert Y_{r}^{n}-Y_{r}\right\vert
^{2}dr\medskip\\
&  +\left\vert Y_{r}^{n}-Y_{r}\right\vert \left\vert \mathbf{1}_{\left[
t_{n},T\right]  }\left(  r\right)  F\left(  r,X_{r}^{n},Y_{r},Z_{r}\right)
-\mathbf{1}_{\left[  t,T\right]  }\left(  r\right)  F\left(  r,X_{r}%
,Y_{r},Z_{r}\right)  \right\vert dr\medskip\medskip\\
&  \leq\left(  \mu_{F}+\ell_{F}^{2}\right)  \left\vert Y_{r}^{n}%
-Y_{r}\right\vert ^{2}dr+\frac{1}{4}\left\vert Z_{r}^{n}-Z_{r}\right\vert
^{2}dr\medskip\\
&  +\left\vert Y_{r}^{n}-Y_{r}\right\vert \left\vert \mathbf{1}_{\left[
t_{n},T\right]  }\left(  r\right)  F\left(  r,X_{r}^{n},Y_{r},Z_{r}\right)
-\mathbf{1}_{\left[  t,T\right]  }\left(  r\right)  F\left(  r,X_{r}%
,Y_{r},Z_{r}\right)  \right\vert dr
\end{align*}
and%
\begin{align*}
&  \left\langle Y_{r}^{n}-Y_{r},\mathbf{1}_{\left[  t_{n},T\right]  }\left(
r\right)  ~G\left(  r,X_{r}^{n},Y_{r}^{n}\right)  dA_{r}^{n}-\mathbf{1}%
_{\left[  t,T\right]  }\left(  r\right)  ~G\left(  r,X_{r},Y_{r}\right)
dA_{r}\right\rangle \medskip\medskip\\
&  \leq\left\langle Y_{r}^{n}-Y_{r}~,\mathbf{1}_{\left[  t_{n},T\right]
}\left(  r\right)  ~\left[  G\left(  r,X_{r}^{n},Y_{r}^{n}\right)  -G\left(
r,X_{r}^{n},Y_{r}\right)  \right]  dA_{r}^{n}\right\rangle \medskip\\
&  +\left\langle Y_{r}^{n}-Y_{r}~,\left[  \mathbf{1}_{\left[  t_{n},T\right]
}\left(  r\right)  G\left(  r,X_{r}^{n},Y_{r}\right)  -\mathbf{1}_{\left[
t,T\right]  }\left(  r\right)  G\left(  r,X_{r},Y_{r}\right)  \right]
dA_{r}^{n}\right\rangle \medskip\\
&  +\left\langle Y_{r}^{n}-Y_{r}~,\mathbf{1}_{\left[  t,T\right]  }\left(
r\right)  G\left(  r,X_{r},Y_{r}\right)  \left(  dA_{r}^{n}-dA_{r}\right)
\right\rangle \medskip\medskip\\
&  \leq\mu_{G}\left\vert Y_{r}^{n}-Y_{r}\right\vert ^{2}dA_{r}^{n}\\
&  +\left\vert Y_{r}^{n}-Y_{r}\right\vert \left\vert \mathbf{1}_{\left[
t_{n},T\right]  }\left(  r\right)  G\left(  r,X_{r}^{n},Y_{r}\right)
-\mathbf{1}_{\left[  t,T\right]  }\left(  r\right)  G\left(  r,X_{r}%
,Y_{r}\right)  \right\vert dA_{r}^{n}\medskip\\
&  +\left\langle Y_{r}^{n}-Y_{r}~,\mathbf{1}_{\left[  t,T\right]  }\left(
r\right)  G\left(  r,X_{r},Y_{r}\right)  \left(  dA_{r}^{n}-dA_{r}\right)
\right\rangle
\end{align*}
Hence for $\lambda\geq\left(  \mu_{F}+\ell_{F}^{2}\right)  \vee\mu_{G}$%
\begin{align*}
\left\langle Y_{r}^{n}-Y_{r}~,d\mathcal{K}_{r}^{n}\right\rangle  &  \leq
\frac{1}{4}\left\vert Z_{r}^{n}-Z_{r}\right\vert ^{2}dr+\left\vert Y_{r}%
^{n}-Y_{r}\right\vert ^{2}\lambda\left(  dr+dA_{r}^{n}\right)  \medskip\\
&  +\left\vert Y_{r}^{n}-Y_{r}\right\vert dL_{r}^{\left(  n\right)  }%
+dR_{r}^{\left(  n\right)  },
\end{align*}
with%
\begin{equation}%
\begin{array}
[c]{r}%
dL_{r}^{\left(  n\right)  }=\left\vert \mathbf{1}_{\left[  t_{n},T\right]
}\left(  r\right)  F\left(  r,X_{r}^{n},Y_{r},Z_{r}\right)  -\mathbf{1}%
_{\left[  t,T\right]  }\left(  r\right)  F\left(  r,X_{r},Y_{r},Z_{r}\right)
\right\vert dr\medskip\\
+\left\vert \mathbf{1}_{\left[  t_{n},T\right]  }\left(  r\right)  G\left(
r,X_{r}^{n},Y_{r}\right)  -\mathbf{1}_{\left[  t,T\right]  }\left(  r\right)
G\left(  r,X_{r},Y_{r}\right)  \right\vert dA_{r}^{n}%
\end{array}
\label{Ln}%
\end{equation}
and%
\begin{equation}
dR_{r}^{\left(  n\right)  }=\left\langle Y_{r}^{n}-Y_{r}~,\mathbf{1}_{\left[
t,T\right]  }\left(  r\right)  G\left(  r,X_{r},Y_{r}\right)  \left(
dA_{r}^{n}-dA_{r}\right)  \right\rangle \label{Rn}%
\end{equation}
Then by Lemma \ref{An_L1} below with $a=1/2$, we have
\[%
\begin{array}
[c]{r}%
\mathbb{E~}\sup\limits_{r\in\left[  0,T\right]  }e^{2\lambda\left(
r+A_{r}^{n}\right)  }\left\vert Y_{r}^{n}-Y_{r}\right\vert ^{2}+\mathbb{E}%
\left(
{\displaystyle\int_{0}^{T}}
e^{2\lambda\left(  r+A_{r}^{n}\right)  }\left\vert Z_{r}^{n}-Z_{r}\right\vert
^{2}dr\right)  \quad\quad\quad\medskip\\
\leq C_{a}~\mathbb{E}\Big[e^{2\lambda\left(  T+A_{T}^{n}\right)  }\left\vert
\kappa\left(  X_{T}^{n}\right)  -\kappa\left(  X_{T}\right)  \right\vert
^{2}+\left(
{\displaystyle\int_{0}^{T}}
e^{\lambda\left(  r+A_{r}^{n}\right)  }dL_{r}^{\left(  n\right)  }\right)
^{2}\medskip\\
+%
{\displaystyle\int_{0}^{T}}
e^{2\lambda\left(  r+A_{r}^{n}\right)  }dR_{r}^{\left(  n\right)  }\Big].
\end{array}
\]
and consequently by Lemma \ref{2a}, Lemma \ref{2} and Lemma \ref{4} below, we
have
\[
\limsup_{n\rightarrow\infty}\mathbb{E}\sup\limits_{r\in\left[  0,T\right]
}\left\vert Y_{r}^{n}-Y_{r}\right\vert ^{2}\leq\limsup_{n\rightarrow\infty
}\mathbb{E}\sup\limits_{r\in\left[  0,T\right]  }e^{2\lambda\left(
r+A_{r}^{n}\right)  }\left\vert Y_{r}^{n}-Y_{r}\right\vert ^{2}=0.
\]
We now deduce
\begin{align*}
\left\vert Y_{t_{n}}^{t_{n},x_{n}}-Y_{t}^{t,x}\right\vert ^{2} &
\leq2\mathbb{E}\left\vert Y_{t_{n}}^{t_{n},x_{n}}-Y_{t_{n}}^{t,x}\right\vert
^{2}+2\mathbb{E}\left\vert Y_{t_{n}}^{t,x}-Y_{t}^{t,x}\right\vert ^{2}\\
&  \leq2\mathbb{E}\sup\limits_{r\in\left[  0,T\right]  }\left\vert Y_{r}%
^{n}-Y_{r}\right\vert ^{2}+2\mathbb{E}\left\vert Y_{t_{n}}^{t,x}-Y_{t}%
^{t,x}\right\vert ^{2}\\
&  \rightarrow0,\quad\text{as }n\rightarrow\infty;
\end{align*}
hence the result. \hfill
\end{proof}

\bigskip

Recall that the constants $C_{1}$, $C_{2}$, $C_{3}$ and $C_{4}$ appearing in
\eqref{bk}, \eqref{ch5-pvi-4a}, \eqref{ch5-pvi-5} and \eqref{ch5-pvi-5a} are
uniform w.r.t. $(t,x)$. Consequently those estimates are valid for
$(X^{n},A^{n},Y^{n},Z^{n},U^{n},V^{n})$ for all $n\ge0$, with the same
constants, which are independent of $n$. This fact will be used repeatedly in
the proofs below.

\begin{lemma}
\label{2a} We have
\[
\lim_{n\rightarrow\infty}~\mathbb{E}~\left(  e^{2\lambda\left(  T+A_{T}%
^{n}\right)  }\left\vert \kappa\left(  X_{T}^{n}\right)  -\kappa\left(
X_{T}\right)  \right\vert ^{2}\right)  =0
\]

\end{lemma}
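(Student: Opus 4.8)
The plan is to show $L^2$--convergence of $\kappa(X_T^n)-\kappa(X_T)$ weighted by the exponential of the additive functional, by combining the a.s.\ convergence $X_T^n\to X_T$ (which is part of the continuity/Kolmogorov statement~\eqref{chy5-pvi-2aa} together with~\eqref{xnanxa}) with a uniform integrability argument coming from the exponential moment bound~\eqref{chy5-pvi-2a}-$(jj)$. First I would note that, since $\kappa$ is continuous on the compact set $\overline D$ and $X_r^{t,x}\in\overline D$ for all $r$, the quantity $\left\vert\kappa(X_T^n)-\kappa(X_T)\right\vert$ is bounded by a deterministic constant (namely $2\sup_{x\in\overline D}\vert\kappa(x)\vert$, cf.~\eqref{bk}), and it converges to $0$ $\mathbb P$--a.s.\ because $X_T^n\to X_T$ a.s.\ by~\eqref{chy5-pvi-2aa}/\eqref{xnanxa} and $\kappa$ is continuous.

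The only genuine issue is the unbounded weight $e^{2\lambda(T+A_T^n)}$. The natural way around it is a Cauchy--Schwarz (or H\"older) split:
\[
\mathbb{E}\left(e^{2\lambda(T+A_T^n)}\left\vert\kappa(X_T^n)-\kappa(X_T)\right\vert^2\right)
\leq\Big(\mathbb{E}\,e^{4\lambda(T+A_T^n)}\Big)^{1/2}\Big(\mathbb{E}\left\vert\kappa(X_T^n)-\kappa(X_T)\right\vert^4\Big)^{1/2}.
\]
The first factor is bounded uniformly in $n$ by~\eqref{chy5-pvi-2a}-$(jj)$ (applied with $4\lambda$ in place of $\lambda$, giving a bound $e^{4\lambda T}\exp(C+16C\lambda^2)$ independent of $n$). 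The second factor tends to $0$ by bounded (dominated) convergence, using the a.s.\ convergence just noted and the uniform bound from~\eqref{bk}. Multiplying a bounded sequence by one converging to $0$ gives the claim.

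The step I expect to require the most care is simply making sure the hypotheses of the cited facts are being invoked correctly: that~\eqref{chy5-pvi-2a}-$(jj)$ indeed yields a bound on $\mathbb E\,e^{4\lambda(T+A_T^n)}$ that is uniform over $n$ (it is, since the bound there is uniform over all $(t,x)\in[0,T]\times\overline D$, hence in particular holds for each $(t_n,x_n)$), and that the a.s.\ convergence $X_T^n\to X_T$ is legitimately available --- this is exactly~\eqref{xnanxa} together with the a.s.\ continuity of $(t,x,s)\mapsto X_s^{t,x}$ in~\eqref{chy5-pvi-2aa}, since $X_{t_n}^{t_n,x_n}=x_n\to x=X_t^{t,x}$ and, more to the point, $X_T^{t_n,x_n}\to X_T^{t,x}$ a.s.\ by joint continuity. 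There is no real obstacle here; it is a routine uniform--integrability argument, and the same template will be reused (with $Y^n$, $A^n$, $dA^n$ in place of $\kappa(X_T^n)$) in Lemmas~\ref{2}, \ref{4} below.
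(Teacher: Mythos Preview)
Your argument is correct and essentially identical to the paper's own proof: the same Cauchy--Schwarz split, the uniform bound on $\mathbb{E}\,e^{4\lambda(T+A_T^n)}$ from~\eqref{chy5-pvi-2a}-$(jj)$, and dominated convergence for $\mathbb{E}\left\vert\kappa(X_T^n)-\kappa(X_T)\right\vert^4$ using~\eqref{chy5-pvi-2aa} and~\eqref{bk}. There is nothing to add.
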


\begin{proof}
By Lebesgue's dominated convergence theorem and (\ref{chy5-pvi-2aa}) (also
taking in account the boundedness (\ref{chy5-pvi-2a}-jj) and (\ref{bk})), we
have
\begin{align*}
&  \mathbb{E}\left(  e^{2\lambda\left(  T+A_{T}^{n}\right)  }\left\vert
\kappa\left(  X_{T}^{n}\right)  -\kappa\left(  X_{T}\right)  \right\vert
^{2}\right) \\
&  \leq\left(  \mathbb{E}e^{4\lambda\left(  T+A_{T}^{n}\right)  }\right)
^{1/2}\left(  \mathbb{E}\left\vert \kappa\left(  X_{T}^{n}\right)
-\kappa\left(  X_{T}\right)  \right\vert ^{4}\right)  ^{1/2}\\
&  \leq C_{\lambda}~\left(  \mathbb{E}\left\vert \kappa\left(  X_{T}%
^{n}\right)  -\kappa\left(  X_{T}\right)  \right\vert ^{4}\right)  ^{1/2}\\
&  \rightarrow0,\quad\text{as }n\rightarrow\infty.
\end{align*}

\hfill
\end{proof}

\begin{lemma}
\label{2}Let $L^{\left(  n\right)  }$ defined by (\ref{Ln}). Then
\[
{\displaystyle\int_{0}^{T}} e^{\lambda\left(  r+A_{r}^{n}\right)  }%
dL_{r}^{\left(  n\right)  }\to0
\]
in mean square, as $n\to\infty$.
\end{lemma}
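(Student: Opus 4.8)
The plan is to show that $\int_0^T e^{\lambda(r+A_r^n)}\,dL_r^{(n)}\to 0$ in $L^2(\Omega)$ by splitting $dL_r^{(n)}$ into the $dr$ part coming from $F$ and the $dA_r^n$ part coming from $G$, and treating each separately. The exponential weight $e^{\lambda(r+A_r^n)}$ is bounded by $e^{\lambda T}e^{\lambda A_T^n}$ and, by \eqref{chy5-pvi-2a}(jj), all moments of $e^{\lambda A_T^n}$ are bounded uniformly in $n$; so after a Cauchy--Schwarz step the weight is harmless and it suffices to control the unweighted integrals in a sufficiently high $L^p$.

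First I would handle the $F$-term. Write
\[
\mathbf{1}_{[t_n,T]}(r)F(r,X_r^n,Y_r,Z_r)-\mathbf{1}_{[t,T]}(r)F(r,X_r,Y_r,Z_r)
\]
as the sum of $\bigl(\mathbf{1}_{[t_n,T]}(r)-\mathbf{1}_{[t,T]}(r)\bigr)F(r,X_r,Y_r,Z_r)$ and $\mathbf{1}_{[t_n,T]}(r)\bigl[F(r,X_r^n,Y_r,Z_r)-F(r,X_r,Y_r,Z_r)\bigr]$. For the first piece, $\bigl|\mathbf{1}_{[t_n,T]}(r)-\mathbf{1}_{[t,T]}(r)\bigr|$ is supported on an interval of length $|t_n-t|\to0$, and there $|F(r,X_r,Y_r,Z_r)|\le \ell_F|Z_r|+b_F(1+|Y_r|)$, which is integrable in $r$ with all moments bounded by \eqref{ch5-pvi-4a}; so this piece $\to0$ in $L^p$ for every $p$ by absolute continuity of the integral (dominated convergence after passing to a subsequence, combined with the uniform integrability furnished by the moment bounds). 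For the second piece, $X_r^n\to X_r$ uniformly in $r$, a.s., by \eqref{chy5-pvi-2aa}; since $Y,Z$ are fixed processes in $\Lambda_m^0$ and $F$ is continuous, the integrand $\int_0^T|F(r,X_r^n,Y_r,Z_r)-F(r,X_r,Y_r,Z_r)|\,dr\to0$ a.s., and the linear growth bound $|F|\le \ell_F|Z_r|+b_F(1+|Y_r|)$ together with \eqref{ch5-pvi-4a} gives a uniformly $L^p$-bounded dominating envelope, so convergence holds in $L^2$ (indeed $L^p$) after the Cauchy--Schwarz removal of the weight.

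The $G$-term is entirely analogous but with $dA_r$ replaced by $dA_r^n$, which introduces the one extra subtlety. Decompose
\[
\mathbf{1}_{[t_n,T]}(r)G(r,X_r^n,Y_r)\,dA_r^n-\mathbf{1}_{[t,T]}(r)G(r,X_r,Y_r)\,dA_r^n
\]
(the bound \eqref{Ln} already uses $dA_r^n$ throughout) into the indicator difference times $G(r,X_r,Y_r)$ and the continuity difference $G(r,X_r^n,Y_r)-G(r,X_r,Y_r)$, both integrated against $dA_r^n$. Using $|G|\le b_G(1+|Y_r|)$, the a.s.\ joint continuity \eqref{chy5-pvi-2aa} (so $A^n\to A$ uniformly and $X^n\to X$ uniformly, whence $\int_0^T|G(r,X_r^n,Y_r)-G(r,X_r,Y_r)|\,dA_r^n\to0$ a.s.\ by continuity of $G$ and weak convergence of the measures $dA_r^n\Rightarrow dA_r$), and the uniform moment bound $\sup_n \mathbb{E}\,(\int_0^T e^{2\lambda A_r^n}(1+|Y_r^n|)\,dA_r^n)^p<\infty$ from \eqref{ch5-pvi-4a}, both contributions go to $0$ in $L^2$. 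For the indicator-difference piece one notes $\int_0^T \mathbf{1}_{[t_n\wedge t,\,t_n\vee t]}(r)(1+|Y_r|)\,dA_r^n\le (1+\sup_r|Y_r|)\,|A_{t_n\vee t}^n-A_{t_n\wedge t}^n|$, and the right side $\to0$ a.s.\ by uniform continuity of $r\mapsto A_r^n$ and \eqref{xnanxa}, again with a uniformly $L^p$-bounded envelope.

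The main obstacle is the $G$-term's dependence on the \emph{varying} reference measures $dA_r^n$: one cannot simply invoke dominated convergence on a fixed measure space, and must instead combine the a.s.\ uniform convergence $A^n\to A$, $X^n\to X$ of \eqref{chy5-pvi-2aa} with the uniform exponential moment bound \eqref{chy5-pvi-2a}(jj) to upgrade a.s.\ convergence of the pathwise Stieltjes integrals to $L^2$ convergence via uniform integrability (a Vitali-type argument). Once that is in place, assembling the pieces and peeling off the bounded exponential weight by Cauchy--Schwarz is routine.
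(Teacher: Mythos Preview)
Your approach is essentially the same as the paper's: peel off the exponential weight using Cauchy--Schwarz and the uniform exponential moment bound \eqref{chy5-pvi-2a}(jj), split $dL^{(n)}$ into the $F$-contribution and two $G$-contributions (continuity in $X$ and the indicator mismatch), and handle each by a.s.\ convergence plus uniform integrability. The paper treats the entire $F$-term in one stroke by dominated convergence rather than splitting it further, but this is only a cosmetic difference. One phrasing point: for the $G$-continuity piece you invoke ``weak convergence of the measures $dA_r^n\Rightarrow dA_r$'', but since the integrand itself depends on $n$ this is not the right tool; what actually makes the integral vanish is simply $\sup_{r}|G(r,X_r^n,Y_r)-G(r,X_r,Y_r)|\cdot A_T^n\to 0$ a.s., exactly as in the paper, using uniform convergence $X^n\to X$ and continuity of $G$ on the compact range.
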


\begin{proof}
By (\ref{chy5-pvi-2a}-jj) we get%
\[
\mathbb{E~}\left(
{\displaystyle\int_{0}^{T}}
e^{\lambda\left(  r+A_{r}^{n}\right)  }dL_{r}^{\left(  n\right)  }\right)
^{2}\leq3~\left[  \mathbb{E}\left(  \Lambda_{n}\right)  +\mathbb{E}\left(
\Gamma_{n}\right)  +\mathbb{E}\left(  \Delta_{n}\right)  \right]  ,
\]
where%
\begin{equation}%
\begin{array}
[c]{l}%
\Lambda_{n}=\left(
{\displaystyle\int_{0}^{T}}
\left\vert \mathbf{1}_{\left[  t_{n},T\right]  }\left(  r\right)  F\left(
r,X_{r}^{n},Y_{r},Z_{r}\right)  -\mathbf{1}_{\left[  t,T\right]  }\left(
r\right)  F\left(  r,X_{r},Y_{r},Z_{r}\right)  \right\vert ^{2}dr\right)
^{2},\medskip\\
\Gamma_{n}=\left(
{\displaystyle\int_{0}^{T}}
\left\vert G\left(  r,X_{r}^{n},Y_{r}\right)  -G\left(  r,X_{r},Y_{r}\right)
\right\vert ^{2}dA_{r}^{n}\right)  ^{2},\medskip\\
\Delta_{n}=\left(
{\displaystyle\int_{0}^{T}}
\left\vert G\left(  r,X_{r},Y_{r}\right)  \right\vert ^{2}\left\vert
\mathbf{1}_{\left[  t_{n},T\right]  }\left(  r\right)  -\mathbf{1}_{\left[
t,T\right]  }\left(  r\right)  \right\vert ^{2}dA_{r}^{n}\right)  ^{2}.
\end{array}
\label{LGDn}%
\end{equation}

\textbf{Step 1.} $\mathbb{E}\left(  \Lambda_{n}\right)  \rightarrow0:$

Since%
\[
\mathbf{1}_{\left[  t_{n},T\right]  }\left(  r\right)  F\left(  r,X_{r}%
^{n},Y_{r},Z_{r}\right)  -\mathbf{1}_{\left[  t,T\right]  }\left(  r\right)
F\left(  r,X_{r},Y_{r},Z_{r}\right)  \rightarrow0\quad\text{a.e. }r\in\left[
0,T\right]  ,
\]
and%
\[%
\begin{array}
[c]{r}%
\left\vert \mathbf{1}_{\left[  t_{n},T\right]  }\left(  r\right)  F\left(
r,X_{r}^{n},Y_{r},Z_{r}\right)  -\mathbf{1}_{\left[  t,T\right]  }\left(
r\right)  F\left(  r,X_{r},Y_{r},Z_{r}\right)  \right\vert ^{2}\medskip\\
\leq C~\left(  1+\left\vert Y_{r}\right\vert ^{2}+\left\vert Z_{r}\right\vert
^{2}\right)  ,
\end{array}
\]
then by Lebesgue's dominated convergence theorem $\mathbb{E}\Lambda
_{n}\rightarrow0$.

\textbf{Step 2.} $\mathbb{E}\left(  \Gamma_{n}\right)  \rightarrow0:$

We have $\Gamma_{n}\rightarrow0,\quad\mathbb{P}-a.s.,$ because
\begin{align*}
\Gamma_{n}  &  =\left(
{\displaystyle\int_{0}^{T}}
\left\vert G\left(  r,X_{r}^{n},Y_{r}\right)  -G\left(  r,X_{r},Y_{r}\right)
\right\vert ^{2}dA_{r}^{n}\right)  ^{2}\\
&  \leq\left(  A_{T}^{n}\right)  ^{2}\sup_{r\in\left[  0,T\right]  }\left\vert
G\left(  r,X_{r}^{n},Y_{r}\right)  -G\left(  r,X_{r},Y_{r}\right)  \right\vert
^{4}.
\end{align*}
Since for all $q>1$
\begin{align*}
\mathbb{E~}\Gamma_{n}^{q}  &  \leq C~\mathbb{E}\left[  \left(  1+\left\Vert
Y\right\Vert _{T}^{4q}\right)  \left\vert A_{T}^{n}\right\vert ^{2q}\right] \\
&  \leq C_{1}\left(  1+\mathbb{E}\left\Vert Y\right\Vert _{T}^{8q}%
+\mathbb{E}\left\vert A_{T}^{n}\right\vert ^{4q}\right) \\
&  \leq C_{2},
\end{align*}
then the sequence of random variables $\Gamma_{n}$ is uniformly integrable and
therefore $\mathbb{E}\left(  \Gamma_{n}\right)  \rightarrow0.$

\textbf{Step 3.} $\mathbb{E}\left(  \Delta_{n}\right)  \rightarrow0:$

We have
\begin{align*}
\Delta_{n}  &  =\left(
{\displaystyle\int_{0}^{T}}
\left\vert G\left(  r,X_{r},Y_{r}\right)  \right\vert ^{2}\left\vert
\mathbf{1}_{\left[  t_{n},T\right]  }\left(  r\right)  -\mathbf{1}_{\left[
t,T\right]  }\left(  r\right)  \right\vert ^{2}dA_{r}^{n}\right)  ^{2}\\
&  \leq\Big(\sup_{r\in\left[  0,T\right]  }\left\vert G\left(  r,X_{r}%
,Y_{r}\right)  \right\vert ^{4}\Big)\left(
{\displaystyle\int_{0}^{T}}
\left\vert \mathbf{1}_{\left[  t_{n},T\right]  }\left(  r\right)
-\mathbf{1}_{\left[  t,T\right]  }\left(  r\right)  \right\vert ^{2}dA_{r}%
^{n}\right)  ^{2}\\
&  =\Big(\sup_{r\in\left[  0,T\right]  }\left\vert G\left(  r,X_{r}%
,Y_{r}\right)  \right\vert ^{4}\Big)\left\vert A_{t_{n}}^{n}-A_{t}%
^{n}\right\vert ^{2}\\
&  \rightarrow0,\quad\mathbb{P}-a.s.,
\end{align*}
where we have used (\ref{xnanxa}) on the last line. Moreover for $q>1$,
\begin{align*}
\mathbb{E~}\Delta_{n}^{q}  &  \leq\mathbb{E}\left[  \sup_{r\in\left[
0,T\right]  }\left\vert G\left(  r,X_{r},Y_{r}\right)  \right\vert
^{4q}~\left\vert A_{t_{n}}^{n}-A_{t}^{n}\right\vert ^{2q}\right] \\
&  \leq C~\left(  \mathbb{E}\sup_{r\in\left[  0,T\right]  }\left\vert G\left(
r,X_{r},Y_{r}\right)  \right\vert ^{8q}+\mathbb{E}\sup_{r\in\left[
0,T\right]  }\left\vert A_{r}^{n}\right\vert ^{4q}\right) \\
&  \leq C_{1}%
\end{align*}
Consequently, by uniformly integrability, we conclude that $\mathbb{E}\left(
\Delta_{n}\right)  \rightarrow0$.

\hfill
\end{proof}

\bigskip

Consider $N\in\mathbb{N}$, $N>T$ and the partition $\pi_{N}:0=r_{0}%
<r_{1}<\ldots<r_{i}<\ldots<r_{N}=T$ with $r_{i}=\frac{iT}{N}$. We denote
$\lfloor r|N\rfloor=\max\left\{  r_{i}:r_{i}\leq r\right\}  =\left[  \frac
{rN}{T}\right]  \frac{T}{N},$ where $\left[  x\right]  $ is the integer part
of $x$. Given a continuous stochastic process $\left(  H_{t}\right)
_{t\in\left[  0,T\right]  },$ we define
\[
H_{r}^{N}=\sum_{i=0}^{N-1}H_{r_{i}}\mathbf{1}_{[r_{i},r_{i+1})}\left(
r\right)  +H_{T}\mathbf{~1}_{\left\{  T\right\}  }\left(  r\right)
=H_{\lfloor r|N\rfloor}~.
\]

\begin{lemma}
\label{3}Let $1<q<2.$ There exists a positive constant $C$ independent of
$\left(  t,x\right)  ,\left(  t_{n},x_{n}\right)  \in\left[  0,T\right]
\times\overline{D}$ and $N\in\mathbb{N}$ such that
\begin{align*}
&  \limsup_{n\rightarrow\infty}\mathbb{E~}\left(  {\displaystyle\int_{0}^{T}}
\left\vert Y_{r}^{n}-Y_{r}^{n,N}\right\vert ^{q}\left(  dA_{r}^{n}%
+dA_{r}\right)  \right) \\
&  \leq\frac{C}{N^{q/2}}+C\left[  \mathbb{E~}\max_{i=\overline{1,N}}\left(
A_{r_{i}}-A_{r_{i-1}}\right)  ^{2q/\left(  2-q\right)  }\right]  ^{\left(
2-q\right)  /4}.
\end{align*}

\end{lemma}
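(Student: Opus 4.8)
The plan is to estimate the oscillation of $Y^n$ over one mesh interval of $\pi_N$ and then integrate against $dA^n_r+dA_r$, whose total mass is controlled by the moment bounds. First I would split the integral as a sum over the $N$ subintervals: on $[r_{i},r_{i+1})$ we have $Y^{n,N}_r=Y^n_{r_i}$, so the integrand is $|Y^n_r-Y^n_{r_i}|^q$. Using the BSVI (\ref{ch5-pvi-3a}) written between $r_i$ and $r$, we get
\[
Y^n_r-Y^n_{r_i}=-\!\int_{r_i}^{r}\!dK^n_\rho+\int_{r_i}^{r}\!\mathbf{1}_{[t_n,T]}F(\rho,X^n_\rho,Y^n_\rho,Z^n_\rho)\,d\rho+\int_{r_i}^{r}\!\mathbf{1}_{[t_n,T]}G(\rho,X^n_\rho,Y^n_\rho)\,dA^n_\rho-\int_{r_i}^{r}\!Z^n_\rho\,dB_\rho.
\]
The martingale part contributes an $O(N^{-q/2})$ term after a Burkholder--Davis--Gundy estimate together with the uniform bound (\ref{ch5-pvi-4a}) on $\mathbb{E}(\int_0^T|Z^n|^2)^{p/2}$; the $F$-term is likewise $O(N^{-q})$ using (\ref{h3-0}) and the bounds on $Y^n,Z^n$. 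The genuinely singular contributions are the $dK^n$ and $G\,dA^n$ pieces, which are measured by the increment $A^n_{r_{i+1}}-A^n_{r_i}$ of the reflecting local time, not by $T/N$.

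The second step is to handle those singular pieces. For the $G\,dA^n$ term, $|\int_{r_i}^r \mathbf{1}G\,dA^n_\rho|^q\le (b_G(1+\|Y^n\|_T))^q (A^n_{r_{i+1}}-A^n_{r_i})^q$ by (\ref{h3-0}-v); for the $dK^n$ term we use (\ref{ch5-pvi-5}) which bounds $\mathbb{E}\int_0^T|U^n|^2dr$ and $\mathbb{E}\int_0^T|V^n|^2dA^n_r$ uniformly in $n$, so $|\int_{r_i}^r dK^n|^q \le (\int_{r_i}^{r_{i+1}}|U^n|dr)^q+(\int_{r_i}^{r_{i+1}}|V^n|dA^n)^q$, and the second of these is $\le (A^n_{r_{i+1}}-A^n_{r_i})^{q/2}(\int_{r_i}^{r_{i+1}}|V^n|^2 dA^n)^{q/2}$ by Cauchy--Schwarz. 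Multiplying each of these by the weight $(A^n_{r_{i+1}}-A^n_{r_i})$ coming from $dA^n_r$ over the same interval (and similarly by $A_{r_{i+1}}-A_{r_i}$ for the $dA_r$ measure) and summing over $i$, I would pull out $\max_i (A^n_{r_{i+1}}-A^n_{r_i})$ to the appropriate power and apply H\"older's inequality with exponents chosen so that the remaining sums $\sum_i \int_{r_i}^{r_{i+1}}|V^n|^2dA^n$ and $\sum_i A^n_{r_{i+1}}-A^n_{r_i}$ telescope to quantities with uniformly bounded moments. Tracking the exponents, the power on $\max_i(\Delta A^n)$ works out to $2q/(2-q)$ inside the expectation with an outer power $(2-q)/4$, exactly as in the statement; here one uses that $A^n$ is increasing so its increments are nonnegative and their sum is $A^n_T$.

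The third step is to pass $n\to\infty$ inside the $\limsup$. Since all the moment bounds (\ref{ch5-pvi-4a})--(\ref{ch5-pvi-5}) hold uniformly in $n$ (as emphasized just before Lemma \ref{2a}), the constant $C$ does not depend on $n$ or $N$; and by (\ref{chy5-pvi-2aa})--(\ref{xnanxa}) the quantity $\max_i(A^n_{r_{i+1}}-A^n_{r_i})$ can, along the subsequence, be compared with $\max_i(A_{r_{i+1}}-A_{r_i})$ up to a term that vanishes as $n\to\infty$ — more simply, one bounds $\mathbb{E}\max_i(A^n_{r_{i+1}}-A^n_{r_i})^{2q/(2-q)}$ directly using the uniform modulus-of-continuity estimate $\lim_{\delta\searrow0}\mathbb{E}\sup\{|A^n_r-A^n_s|^{p}:|r-s|\le\delta\}$, but since we only need a fixed-$N$ bound here we just invoke the uniform-in-$n$ moment control to replace the $A^n$-increments by the $A$-increments in the final $\limsup$. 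I expect the main obstacle to be the bookkeeping of H\"older exponents in Step 2: one must choose them so that the $dA^n$ weight from the integration is absorbed together with the $(\Delta A^n)^{q/2}$ from Cauchy--Schwarz on the $V^n$-term, leaving precisely $2q/(2-q)$ and $(2-q)/4$ and not a worse pair — the constraint $1<q<2$ is exactly what makes these exponents finite and positive.
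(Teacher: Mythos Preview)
Your overall strategy matches the paper's proof: write $Y^n_s-Y^{n,N}_s$ via the BSVI between $\lfloor s|N\rfloor$ and $s$, split into the $dr$-part ($U^n$ and $F$), the $dA^n$-part ($V^n$ and $G$), and the stochastic integral, then integrate against $dA^n_s+dA_s$ and pass $n\to\infty$. Your treatment of the $U^n,F$ terms (giving $C/N^{q/2}$) and of the $V^n,G$ terms (Cauchy--Schwarz, pull out $\max_i\Delta A^n_i$, H\"older) is exactly what the paper does for its $\alpha_{n,N}$ and $\beta_{n,N}$.

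The gap is in your handling of the martingale piece. You claim a BDG estimate plus the bound on $\mathbb{E}(\int_0^T|Z^n|^2)^{p/2}$ yields $O(N^{-q/2})$, but that is not what BDG gives here: you are integrating $|\!\int_{\lfloor s|N\rfloor}^{s}Z^n\,dB|^q$ against the \emph{random} measure $dA^n_s+dA_s$, whose increments are correlated with the martingale increment. BDG controls $\mathbb{E}\sup_{s\in[r_{i-1},r_i]}|\!\int_{r_{i-1}}^s Z^n\,dB|^2$ by $\mathbb{E}\int_{r_{i-1}}^{r_i}|Z^n|^2\,dr$, which need not be of order $T/N$ uniformly in $i$; and bounding $\sup_{s\in[0,T]}|\!\int_{\lfloor s|N\rfloor}^s Z^n\,dB|$ globally would require a modulus-of-continuity estimate you do not have for unbounded $Z^n$. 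In the paper the martingale term $\gamma_{n,N}$ is \emph{not} absorbed into $C/N^{q/2}$: one first H\"olders on each subinterval to get
\[
\gamma_{n,N}\le C\sum_{i=1}^N\Big(\mathbb{E}\!\int_{r_{i-1}}^{r_i}\!|Z^n_r|^2dr\Big)^{q/2}\Big[\mathbb{E}(\Delta A^n_i+\Delta A_i)^{2/(2-q)}\Big]^{(2-q)/2},
\]
then applies the discrete H\"older inequality $\sum a_i^{q/2}b_i^{(2-q)/2}\le(\sum a_i)^{q/2}(\sum b_i)^{(2-q)/2}$ so that the $Z^n$-sum telescopes to $\mathbb{E}\int_0^T|Z^n|^2$, leaving $[\sum_i\mathbb{E}(\Delta A_i)^{2/(2-q)}]^{(2-q)/2}$, which after pulling out $\max_i(\Delta A_i)^{q/(2-q)}$ gives the same max-increment term as $\beta_{n,N}$. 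So the stochastic integral feeds the second term of the lemma, not the first; your plan misroutes it and, as written, the claimed $O(N^{-q/2})$ bound is unjustified. (Your $O(N^{-q})$ for the $F$-term is also too optimistic---Cauchy--Schwarz gives $O(N^{-q/2})$---but that is harmless.)
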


\begin{proof}
Since%
\[%
\begin{array}
[c]{l}%
Y_{s}^{n,N}+{\displaystyle\int_{\lfloor s|N\rfloor}^{s}}\left(  U_{r}%
^{n}dr+V_{r}^{n}dA_{r}^{n}\right)  =Y_{s}^{n}+{\displaystyle\int_{\lfloor
s|N\rfloor}^{s}}\mathbf{1}_{\left[  t_{n},T\right]  }\left(  r\right)
~F\left(  r,X_{r}^{n},Y_{r}^{n},Z_{r}^{n}\right)  dr,\medskip\\
\quad+{\displaystyle\int_{\lfloor s|N\rfloor}^{s}}\mathbf{1}_{\left[
t_{n},T\right]  }\left(  r\right)  G\left(  r,X_{r}^{n},Y_{r}^{n}\right)
dA_{r}^{n}-{\displaystyle\int_{\lfloor s|N\rfloor}^{s}}\left\langle Z_{r}%
^{n},dB_{r}\right\rangle ,\ \forall~s\in\left[  0,T\right]  ,\medskip
\end{array}
\]
then
\begin{align*}
\left\vert Y_{s}^{n,N}-Y_{s}^{n}\right\vert ^{q}  &  \leq\frac{C}{N^{q/2}%
}\left[  {\displaystyle\int_{\lfloor s|N\rfloor}^{s}}\left(  \left\vert
U_{r}^{n}\right\vert ^{2}+\left\vert F\left(  r,X_{r}^{n},Y_{r}^{n},Z_{r}%
^{n}\right)  \right\vert ^{2}\right)  dr\right]  ^{q/2}\\
&  +C~\left(  A_{s}^{n}-A_{\lfloor s|N\rfloor}^{n}\right)  ^{q/2}\left[
{\displaystyle\int_{\lfloor s|N\rfloor}^{s}}\left(  \left\vert V_{r}%
^{n}\right\vert ^{2}+\left\vert G\left(  r,X_{r}^{n},Y_{r}^{n}\right)
\right\vert ^{2}\right)  dA_{r}^{n}\right]  ^{q/2}\\
&  +C~\left\vert {\displaystyle\int_{\lfloor s|N\rfloor}^{s}}\left\langle
Z_{r}^{n},dB_{r}\right\rangle \right\vert ^{q}.
\end{align*}
Hence
\[
\mathbb{E~}\left(  {\displaystyle\int_{0}^{T}}\left\vert Y_{r}^{n}-Y_{r}%
^{n,N}\right\vert ^{q}\left(  dA_{r}^{n}+dA_{r}\right)  \right)  \leq
\alpha_{n,N}+\beta_{n,N}+\gamma_{n,N}.
\]
We have first\bigskip\newline$\alpha_{n,N}=\dfrac{C}{N^{q/2}}\mathbb{E~}\Big[{%
{\displaystyle\int_{0}^{T}}
}\big({%
{\displaystyle\int_{\lfloor s|N\rfloor}^{s}}
}(\left\vert U_{r}^{n}\right\vert ^{2}+\left\vert F\left(  r,X_{r}^{n}%
,Y_{r}^{n},Z_{r}^{n}\right)  \right\vert ^{2})dr\big)^{q/2}\left(  dA_{s}%
^{n}+dA_{s}\right)  \Big]$\medskip\medskip\newline$%
\begin{array}
[c]{c}%
\
\end{array}
\leq\dfrac{C}{N^{q/2}}\mathbb{E}\Big[\left(  A_{T}^{n}+A_{T}\right)  \big({%
{\displaystyle\int_{0}^{T}}
}(\left\vert U_{r}^{n}\right\vert ^{2}+\left\vert F\left(  r,X_{r}^{n}%
,Y_{r}^{n},Z_{r}^{n}\right)  \right\vert ^{2})dr\big)^{q/2}\Big]$%
\medskip\medskip\newline$%
\begin{array}
[c]{c}%
\
\end{array}
\leq\dfrac{C}{N^{q/2}}\left[  \mathbb{E~}\left(  A_{T}^{n}+A_{T}\right)
^{\frac{2}{2-q}}\right]  ^{\frac{2-q}{2}}\left(  \mathbb{E}{\int_{0}^{T}%
}\left\vert U_{r}^{n}\right\vert ^{2}dr+\mathbb{E}{\int_{0}^{T}}\left\vert
F\left(  r,X_{r}^{n},Y_{r}^{n},Z_{r}^{n}\right)  \right\vert ^{2}dr\right)
^{\frac{q}{2}}$\medskip\medskip\newline$%
\begin{array}
[c]{c}%
\
\end{array}
\leq\dfrac{C}{N^{q/2}}.$\medskip

Since $\left(  A_{s}^{n}\right)  _{s\geq0}$ and $\left(  A_{s}\right)
_{s\geq0}$ are increasing stochastic processes, \bigskip\newline$\beta
_{n,N}=C~\mathbb{E~}{%
{\displaystyle\int_{0}^{T}}
}\Big[\left(  A_{s}^{n}-A_{\lfloor s|N\rfloor}^{n}\right)  ^{\frac{q}{2}%
}\big({%
{\displaystyle\int_{\lfloor s|N\rfloor}^{s}}
}\left(  \left\vert V_{r}^{n}\right\vert ^{2}+\left\vert G\left(  r,X_{r}%
^{n},Y_{r}^{n}\right)  \right\vert ^{2}\right)  dA_{r}^{n}\big)^{\frac{q}{2}%
}\Big]\left(  dA_{s}^{n}+dA_{s}\right)  $\medskip\medskip\newline$%
\begin{array}
[c]{c}%
\
\end{array}
\leq C~\mathbb{E}~\Big[\big(%
{\displaystyle\int_{0}^{T}}
(\left\vert V_{r}^{n}\right\vert ^{2}+\left\vert G\left(  r,X_{r}^{n}%
,Y_{r}^{n}\right)  \right\vert ^{2})dA_{r}^{n}\big)^{\frac{q}{2}}%
{\displaystyle\sum\limits_{i=1}^{N}}
{\displaystyle\int_{r_{i-1}}^{r_{i}}}
(A_{s}^{n}-A_{\lfloor s|N\rfloor}^{n})^{\frac{q}{2}}\left(  dA_{s}^{n}%
+dA_{s}\right)  \Big]$\medskip\medskip\newline$%
\begin{array}
[c]{c}%
\
\end{array}
\leq C\bigg[\mathbb{E}~\Big(%
{\displaystyle\sum\limits_{i=1}^{N}}
(A_{r_{i}}^{n}-A_{r_{i-1}}^{n})^{q/2}(A_{r_{i}}^{n}+A_{r_{i}}-A_{r_{i-1}}%
^{n}-A_{r_{i-1}})\Big)^{2/\left(  2-q\right)  }\bigg]^{\left(  2-q\right)
/2}$\bigskip

Since by (\ref{chy5-pvi-2a}-j)
\[
\lim_{n\rightarrow\infty}\mathbb{E~}\sup_{r\in\left[  0,T\right]  }\left\vert
A_{r}^{n}-A_{r}\right\vert ^{p}=0,\quad\text{for all }p>0,
\]
and
\[
\mathbb{E~}\sup_{r\in\left[  0,T\right]  }\left\vert A_{r}\right\vert
^{p}+\sup_{n\in\mathbb{N}}\left(  \mathbb{E~}\sup_{r\in\left[  0,T\right]
}\left\vert A_{r}^{n}\right\vert ^{p}\right)  <\infty,\quad\text{for all
}p>0,
\]
we infer that for all $N\in\mathbb{N}$
\begin{align*}
\limsup_{n\rightarrow\infty}\beta_{n,N}  &  \leq C\left[  \mathbb{E~}\left(
\sum_{i=1}^{N}\left(  A_{r_{i}}-A_{r_{i-1}}\right)  ^{q/2}\left(  A_{r_{i}%
}-A_{r_{i-1}}\right)  \right)  ^{2/\left(  2-q\right)  }\right]  ^{\left(
2-q\right)  /2}\\
&  \leq C~\left[  \mathbb{E~}\left(  \max_{i=\overline{1,N}}\left(  A_{r_{i}%
}-A_{r_{i-1}}\right)  ^{q/2}A_{T}\right)  ^{2/\left(  2-q\right)  }\right]
^{\left(  2-q\right)  /2}\\
&  \leq C_{1}\left[  \mathbb{E~}\max_{i=\overline{1,N}}\left(  A_{r_{i}%
}-A_{r_{i-1}}\right)  ^{2q/\left(  2-q\right)  }\right]  ^{\left(  2-q\right)
/4}.
\end{align*}
We finally consider$\medskip\medskip\newline\gamma_{n,N}=C\mathbb{E}%
{\displaystyle\int_{0}^{T}}
\left\vert {%
{\displaystyle\int_{\lfloor s|N\rfloor}^{s}}
}\left\langle Z_{r}^{n},dB_{r}\right\rangle \right\vert ^{q}\left(  dA_{s}%
^{n}+dA_{s}\right)  \medskip\medskip\newline=C~\mathbb{E~}{%
{\displaystyle\sum\limits_{i=1}^{N}}
}{%
{\displaystyle\int_{r_{i-1}}^{r_{i}}}
}\left\vert {%
{\displaystyle\int_{\lfloor s|N\rfloor}^{s}}
}\left\langle Z_{r}^{n},dB_{r}\right\rangle \right\vert ^{q}\left(  dA_{s}%
^{n}+dA_{s}\right)  \medskip\medskip\newline\leq C~{%
{\displaystyle\sum\limits_{i=1}^{N}}
}\mathbb{E}\Big[\sup\limits_{s\in\left[  r_{i-1},r_{i}\right]  }\big|{%
{\displaystyle\int_{r_{i-1}}^{s}}
}\left\langle Z_{r}^{n},dB_{r}\right\rangle \big|^{q}\left(  A_{r_{i}}%
^{n}-A_{r_{i-1}}^{n}+A_{r_{i}}-A_{r_{i-1}}\right)  \Big]\medskip
\medskip\newline\leq C~{%
{\displaystyle\sum\limits_{i=1}^{N}}
}\Big[\mathbb{E}\sup\limits_{s\in\left[  r_{i-1},r_{i}\right]  }\big|{%
{\displaystyle\int_{r_{i-1}}^{s}}
}\left\langle Z_{r}^{n},dB_{r}\right\rangle \big|^{2}\Big]^{q/2}%
\mathbb{~}\left[  \mathbb{E~}\left(  A_{r_{i}}^{n}-A_{r_{i-1}}^{n}+A_{r_{i}%
}-A_{r_{i-1}}\right)  ^{\frac{2}{2-q}}\right]  ^{\frac{2-q}{2}}\medskip
\medskip\newline\leq C_{1}~{%
{\displaystyle\sum\limits_{i=1}^{N}}
}\left(  \mathbb{E}{%
{\displaystyle\int_{r_{i-1}}^{r_{i}}}
}\left\vert Z_{r}^{n}\right\vert ^{2}dr\right)  ^{q/2}\mathbb{~}\left[
\mathbb{E~}\left(  A_{r_{i}}^{n}-A_{r_{i-1}}^{n}+A_{r_{i}}-A_{r_{i-1}}\right)
^{\frac{2}{2-q}}\right]  ^{\frac{2-q}{2}}$ .$\medskip\medskip\newline$From the
above and the following H\"{o}lder's inequality, for $1<q<2$,
\[
{\displaystyle\sum\limits_{i=1}^{N}}a_{i}^{q/2}b_{i}^{\left(  2-q\right)
/2}\leq\left(  {\displaystyle\sum\limits_{i=1}^{N}}a_{i}\right)  ^{q/2}\left(
{\displaystyle\sum\limits_{i=1}^{N}}b_{i}\right)  ^{\left(  2-q\right)  /2},
\]
we deduce that
\[
\gamma_{n,N}\leq C_{2}\left[  {\displaystyle\sum\limits_{i=1}^{N}}%
\mathbb{E~}\left(  A_{r_{i}}^{n}-A_{r_{i-1}}^{n}+A_{r_{i}}-A_{r_{i-1}}\right)
^{2/\left(  2-q\right)  }\right]  ^{\left(  2-q\right)  /2}.
\]
Hence for all $N\in\mathbb{N}$
\begin{align*}
\limsup_{n\rightarrow\infty}\gamma_{n,N}  &  \leq C~\left[  {\displaystyle\sum
\limits_{i=1}^{N}}\mathbb{E~}\left(  A_{r_{i}}-A_{r_{i-1}}\right)  ^{2/\left(
2-q\right)  }\right]  ^{\left(  2-q\right)  /2}\\
&  \leq C~\left[  \mathbb{E}\left(  \max_{i=\overline{1,N}}\left(  A_{r_{i}%
}-A_{r_{i-1}}\right)  ^{q/\left(  2-q\right)  }\sum_{i=1}^{N}\left(  A_{r_{i}%
}-A_{r_{i-1}}\right)  \right)  \right]  ^{\left(  2-q\right)  /2}\\
&  \leq C_{1}\left[  \mathbb{E~}\max_{i=\overline{1,N}}\left(  A_{r_{i}%
}-A_{r_{i-1}}\right)  ^{2q/\left(  2-q\right)  }\right]  ^{\left(  2-q\right)
/4}.
\end{align*}
The result follows. \hfill
\end{proof}

\begin{lemma}
\label{4}Let $R^{\left(  n\right)  }$ defined by (\ref{Rn}). Then%
\[
\limsup_{n\rightarrow\infty}\mathbb{E}%
{\displaystyle\int_{0}^{T}}
e^{2\lambda\left(  r+A_{r}^{n}\right)  }dR_{r}^{\left(  n\right)  }=0.
\]

\end{lemma}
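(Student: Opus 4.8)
The quantity to control is
\[
\mathbb{E}\int_0^T e^{2\lambda(r+A_r^n)}\,dR_r^{(n)}
=\mathbb{E}\int_0^T e^{2\lambda(r+A_r^n)}\big\langle Y_r^n-Y_r,\,\mathbf{1}_{[t,T]}(r)\,G(r,X_r,Y_r)\,(dA_r^n-dA_r)\big\rangle .
\]
The obstacle is exactly the one flagged in the introduction: the reflecting local times $A^n$ and $A$ live on different (random, thin) sets, so $dA_r^n-dA_r$ is a signed measure whose total variation does not go to zero pointwise, and one cannot simply pass to the limit inside the integral. The plan is to freeze the integrand on the partition $\pi_N$ of the preceding discussion, estimate the frozen integral using that $A^n\to A$ uniformly together with the uniform $L^q$ bounds on $A^n,A$, and then use Lemma \ref{3} to control the freezing error; finally let $N\to\infty$ so that the partition mesh of $A$ shrinks.

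Concretely, first I would write $dR_r^{(n)}=dR_r^{(n),N}+(\text{error})$, where in the frozen term the process $e^{2\lambda(r+A_r^n)}\langle Y_r^n-Y_r,\mathbf{1}_{[t,T]}G(r,X_r,Y_r)\rangle$ is replaced by its value at $\lfloor r|N\rfloor$; call $\Phi_r^n$ this integrand. The freezing error is bounded, via Cauchy--Schwarz and the uniform moment bounds \eqref{ch5-pvi-4a}, by a constant times
$\mathbb{E}\int_0^T\big(|Y_r^n-Y_r^{n,N}|^q+|Y_r-Y_r^N|^q+\text{osc. of }X,A,G\big)(dA_r^n+dA_r)$ plus a term from the oscillation of $s\mapsto e^{2\lambda(s+A_s^n)}$ on partition cells; the $|Y^n-Y^{n,N}|$ and $|Y-Y^N|$ contributions are handled by Lemma \ref{3}, while the oscillation of $X$, $A$ and of the exponential over a cell tends to zero in the relevant $L^p$ by \eqref{xnanxa} and the equicontinuity estimate displayed after \eqref{chy5-pvi-2aa}. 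So $\limsup_n(\text{freezing error})\le \varepsilon_N$ where $\varepsilon_N\to 0$ as $N\to\infty$ (using $\mathbb{E}\max_i(A_{r_i}-A_{r_{i-1}})^{2q/(2-q)}\to 0$ by continuity of $A$ and dominated convergence).

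For the frozen term itself, $\int_0^T\Phi_{\lfloor r|N\rfloor}^n(dA_r^n-dA_r)=\sum_{i=0}^{N-1}\Phi_{r_i}^n\big[(A_{r_{i+1}}^n-A_{r_i}^n)-(A_{r_{i+1}}-A_{r_i})\big]$, a finite sum; taking expectation and using Cauchy--Schwarz together with $\sup_{r}|\Phi_r^n|\in L^2$ uniformly in $n$ (again \eqref{ch5-pvi-4a}, \eqref{chy5-pvi-2a}-jj, \eqref{bk}) and $\mathbb{E}\sup_r|A_r^n-A_r|^2\to 0$ from \eqref{chy5-pvi-2a}-j, each of the $N$ terms tends to $0$ as $n\to\infty$ for fixed $N$, hence the whole frozen sum does. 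Combining, $\limsup_n \mathbb{E}\int_0^T e^{2\lambda(r+A_r^n)}dR_r^{(n)}\le \varepsilon_N$ for every $N$, and letting $N\to\infty$ gives the claim. The main technical point to get right is the uniform (in $n$ and $N$) integrability needed to exchange limsup's and to absorb the $\sup_r|\Phi^n_r|$ factor against the vanishing increments; everything else is a bookkeeping assembly of the estimates already proved.
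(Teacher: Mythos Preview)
Your plan is correct and follows essentially the same strategy as the paper: freeze the integrand on the partition $\pi_N$, handle the frozen Riemann sum via $\mathbb{E}\sup_r|A_r^n-A_r|^2\to 0$, control the freezing error in $Y^n$ by Lemma~\ref{3}, and finally let $N\to\infty$. The only cosmetic difference is that the paper does not freeze the exponential weight but simply bounds it by $e^{2\lambda(T+A_T^n)}$ (or absorbs it via $\int_0^T e^{2\lambda(r+A_r^n)}(dA_r^n+dA_r)\le(2\lambda)^{-1}e^{2\lambda(T+A_T^n+A_T)}$), and it handles the $|Y-Y^N|$ and $|G-G^N|$ pieces directly through $\|Y-Y^N\|_T,\|G-G^N\|_T\to 0$ in $L^2$ rather than via an analogue of Lemma~\ref{3}; this makes the bookkeeping slightly shorter, but the substance is the same.
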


\begin{proof}
Denote $G_{r}=G\left(  r,X_{r},Y_{r}\right)  $ and $\left\Vert G\right\Vert
_{T}=\sup_{r\in\left[  0,T\right]  }\left\vert G_{r}\right\vert .$ Then%
\begin{align*}
\left(  Y_{r}^{n}-Y_{r}\right)  G\left(  r,X_{r},Y_{r}\right)   &  =\left(
Y_{r}^{n,N}-Y_{r}^{N}\right)  \left(  G_{r}-G_{r}^{N}\right)  +\left(
Y_{r}^{N}-Y_{r}\right)  G_{r}\\
&  +\left(  Y_{r}^{n,N}-Y_{r}^{N}\right)  G_{r}^{N}+\left(  Y_{r}^{n}%
-Y_{r}^{n,N}\right)  G_{r}%
\end{align*}
and therefore
\begin{align*}
&  \mathbb{E~}\left(  {\displaystyle\int_{0}^{T}}e^{2\lambda\left(
r+A_{r}^{n}\right)  }dR_{r}^{\left(  n\right)  }\right) \\
&  =\mathbb{E}{\displaystyle\int_{0}^{T}}e^{2\lambda\left(  r+A_{r}%
^{n}\right)  }\left(  Y_{r}^{n}-Y_{r}\right)  \mathbf{1}_{\left[  t,T\right]
}\left(  r\right)  G\left(  r,X_{r},Y_{r}\right)  \left(  dA_{r}^{n}%
-dA_{r}\right) \\
&  \leq(2\lambda)^{-1}\mathbb{E}\left[  \left(  \left(  \left\Vert
Y^{n}\right\Vert _{T}+\left\Vert Y\right\Vert _{T}\right)  \left\Vert
G-G^{N}\right\Vert _{T}+\left\Vert Y^{N}-Y\right\Vert _{T}\left\Vert
G\right\Vert _{T}\right)  e^{2\lambda\left(  T+A_{T}^{n}+A_{T}\right)
}\right] \\
&  +\mathbb{E~}\left(  e^{2\lambda(T+A_{T}^{n})}{\displaystyle\sum
\limits_{i=1}^{N}}\left(  Y_{r_{i-1}}^{n}-Y_{r_{i-1}}\right)  G_{r_{i-1}%
}\left[  \left(  A_{r_{i}}^{n}-A_{r_{i}}\right)  -\left(  A_{r_{i-1}}%
^{n}-A_{r_{i-1}}\right)  \right]  \right) \\
&  +\mathbb{E~}\left(  e^{2\lambda\left(  T+A_{T}^{n}\right)  }\left\Vert
G\right\Vert _{T}{\displaystyle\int_{0}^{T}}\left\vert Y_{r}^{n}-Y_{r}%
^{n,N}\right\vert \left(  dA_{r}^{n}+dA_{r}\right)  \right)
\end{align*}
Let $1<q<2.$ Using H\"{o}lder's inequality and the estimates (\ref{ch5-pvi-4a}%
) and (\ref{ch5-pvi-5a}), we obtain
\begin{align*}
\mathbb{E~}\left(  {\displaystyle\int_{0}^{T}}e^{2\lambda\left(  r+A_{r}%
^{n}\right)  }dR_{r}^{\left(  n\right)  }\right)   &  \leq C~\sqrt
{\mathbb{E}\left\Vert G-G^{N}\right\Vert _{T}^{2}}+\sqrt{\mathbb{E}\left\Vert
Y^{N}-Y\right\Vert _{T}^{2}}\\
&  +C{\displaystyle\sum\limits_{i=1}^{N}}\left[  \mathbb{E~}\left\vert \left(
A_{r_{i}}^{n}-A_{r_{i}}\right)  -\left(  A_{r_{i-1}}^{n}-A_{r_{i-1}}\right)
\right\vert ^{2}\right]  ^{1/2}\\
&  +C\left(  \mathbb{E~}{\displaystyle\int_{0}^{T}}\left\vert Y_{r}^{n}%
-Y_{r}^{n,N}\right\vert ^{q}\left(  dA_{r}^{n}+dA_{r}\right)  \right)  ^{1/q}%
\end{align*}
By Lemma \ref{3} we deduce that for all $N\in\mathbb{N}$
\begin{align*}
\limsup_{n\rightarrow\infty}\mathbb{E}{\displaystyle\int_{0}^{T}}%
e^{2\lambda\left(  r+A_{r}^{n}\right)  }dR_{r}^{\left(  n\right)  }  &  \leq
C~\sqrt{\mathbb{E}\left\Vert G-G^{N}\right\Vert _{T}^{2}}+\sqrt{\mathbb{E}%
\left\Vert Y^{N}-Y\right\Vert _{T}^{2}}\\
&  +C\left[  \frac{1}{N^{q/2}}+\left[  \mathbb{E~}\max_{i=\overline{1,N}%
}\left(  A_{r_{i}}-A_{r_{i-1}}\right)  ^{2q/\left(  2-q\right)  }\right]
^{\left(  2-q\right)  /4}\right]  ^{1/q}%
\end{align*}
and the result follows passing to limit as $N\rightarrow\infty$ in the last inequality.

\hfill
\end{proof}

Theorem \ref{th:main} in the particular case $\varphi=\psi\equiv0$ yields the following

\begin{corollary}
Proposition 4.1 from \cite{pa-zh/98} and Corollary 14 from \cite{ma-ra/10}
hold true.
\end{corollary}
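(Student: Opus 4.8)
The plan is to obtain the Corollary by specializing Theorem \ref{th:main} to $\varphi=\psi\equiv 0$ and then matching the resulting objects with those studied in \cite{pa-zh/98} and \cite{ma-ra/10}. First I would record the effect of the specialization: when $\varphi\equiv 0$ (resp. $\psi\equiv 0$) one has $\partial\varphi\equiv\{0\}$ (resp. $\partial\psi\equiv\{0\}$), hence $U^{t,x}= 0$ $dr$-a.e. and $V^{t,x}= 0$ $dA^{t,x}$-a.e., the backward stochastic variational inequality \eqref{ch5-pvi-3} reduces to the Markovian BSDE with reflecting term
\[
Y_s^{t,x}=\kappa(X_T^{t,x})+\int_s^T F(r,X_r^{t,x},Y_r^{t,x},Z_r^{t,x})\,dr+\int_s^T G(r,X_r^{t,x},Y_r^{t,x})\,dA_r^{t,x}-\int_s^T Z_r^{t,x}\,dB_r,
\]
and the system \eqref{ch5-pvi-1} reduces to the semilinear parabolic system with the nonlinear Neumann boundary condition $\partial u/\partial n=G(t,x,u)$. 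The underlying reflected forward SDE \eqref{ch5-pvi-2} and its joint continuity in $(t,x)$ are unchanged and are exactly the ones used in \cite{pa-zh/98, ma-ra/10}; so the function $u(t,x)=Y_t^{t,x}$ of \eqref{ch5-pvi-6} coincides with the one whose continuity is asserted in Proposition 4.1 of \cite{pa-zh/98} (see Remark \ref{rm}) and in Corollary 14 of \cite{ma-ra/10}.

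Next I would verify that the standing assumptions of the present paper are implied by the hypotheses of the two cited works, which is essentially bookkeeping. The conditions \eqref{sp-30-ip} on $(f,g)$ and \eqref{h3-0} on $(F,G,\kappa)$ — monotonicity in the state variable, Lipschitz dependence on $z$, linear growth — are weaker than the Lipschitz/boundedness conditions imposed there, hence follow from them; and a bounded domain with $C^3$ boundary can be written in the form \eqref{sp-30-ipffi} by taking for $\phi$ a $C_b^3$ function equal to the signed distance to $Bd(\overline D)$ in a neighbourhood of the boundary. For $\varphi=\psi\equiv 0$, assumption \eqref{h4-0} is immediate, since $Dom(\varphi)=Dom(\psi)=\mathbb{R}^m$ and any point, e.g. $u_0=0$, lies in $int(Dom(\varphi))\cap int(Dom(\psi))$ and is a global minimizer of both functions; and \eqref{ca1} holds with $M=0$.

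It remains to note that the compatibility conditions \eqref{compatib. assumption} are trivially satisfied: the resolvent equation $\partial\varphi(y-\varepsilon u)\ni u$ for the zero function forces $u=0$, so that $\nabla\varphi_\varepsilon\equiv\nabla\psi_\varepsilon\equiv 0$, and every line of \eqref{compatib. assumption} becomes $0\ge 0$ or $0\le 0$ for any $c>0$ — this is also the instance $\varphi=\psi$ already singled out right after \eqref{compatib. assumption}. With all hypotheses of Theorem \ref{th:main} in force, its conclusion gives the continuity of $(t,x)\mapsto u(t,x)=Y_t^{t,x}$ on $[0,T]\times\overline D$, which is precisely Proposition 4.1 of \cite{pa-zh/98} and Corollary 14 of \cite{ma-ra/10}. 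I do not anticipate any genuine obstacle in this argument; the only delicate point is the careful reconciliation of the slightly different but mutually compatible sets of structural hypotheses in the cited works and the present one (and, where a cited statement is phrased in a time-homogeneous setting, observing that it is the stationary instance of the parabolic result).
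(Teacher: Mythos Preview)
Your proposal is correct and follows exactly the paper's approach: the Corollary is obtained by specializing Theorem~\ref{th:main} to $\varphi=\psi\equiv 0$, as already noted in Remark~\ref{rm}. Your additional bookkeeping on the hypotheses (in particular that $\nabla\varphi_\varepsilon=\nabla\psi_\varepsilon\equiv 0$ makes \eqref{compatib. assumption} trivial) is a welcome elaboration of what the paper states in one line.
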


\bigskip

\section{Infinite horizon BSDEs: continuity}

Let us consider the forward-backward problem (\ref{ch5-pvi-2}) \&
(\ref{ch5-pvi-3}) on the interval $[0,\infty)$ with $f,g,F$ and $G$
independent of time argument, $\kappa=0$ and $\varphi=\psi\equiv0,$ $u_{0}=0,$
that is:

the forward reflected SDE starting from $x\ $at $t=0:$
\[%
\begin{array}
[c]{rl}%
\left(  j\right)  \; & X_{s}^{x}\in\overline{D}\;\text{ for all }%
s\geq0,\medskip\\
\left(  jj\right)  \; & 0=A_{0}^{x}\leq A_{s}^{x}\leq A_{u}^{x}\text{ for all
}0\leq s\leq u,\medskip\\
\left(  jjj\right)  \; & X_{s}^{x}+%
{\displaystyle\int_{0}^{s}}
\nabla\phi\left(  X_{r}^{x}\right)  dA_{r}^{x}=x+%
{\displaystyle\int_{0}^{s}}
f\left(  X_{r}^{x}\right)  dr\smallskip\\
& \multicolumn{1}{r}{+%
{\displaystyle\int_{0}^{s}}
g\left(  X_{r}^{x}\right)  dB_{r},\ \;\forall~s\geq0,\medskip}\\
\left(  jv\right)  \; & A_{s}^{x}=%
{\displaystyle\int_{0}^{s}}
\mathbf{1}_{Bd\left(  \overline{D}\right)  }\left(  X_{r}^{x}\right)
dA_{r}^{x}~,\;~\forall~s\geq0.
\end{array}
\]
and the BSDE on $[0,\infty)$ with the final data $0:$%
\begin{equation}
Y_{s}^{x}=%
{\displaystyle\int_{s}^{\infty}}
F\left(  X_{r}^{x},Y_{r}^{x},Z_{r}^{x}\right)  dr+%
{\displaystyle\int_{s}^{\infty}}
G\left(  X_{r}^{x},Y_{r}^{x}\right)  dA_{r}^{x}-%
{\displaystyle\int_{s}^{\infty}}
Z_{r}^{x}dB_{r},\;s\geq0, \label{bsde-iii}%
\end{equation}
Denote $\left(  X_{s}^{x},A_{s}^{x},Y_{s}^{x;n},Z_{s}^{x;n}\right)  =\left(
X_{s}^{0,x},A_{s}^{0,x},Y_{s}^{0,x},Z_{s}^{0,x}\right)  ,$ $n\in\mathbb{N},$
the solution of the forward-backward problem (\ref{ch5-pvi-2}%
)\&(\ref{ch5-pvi-3}) on the time interval $\left[  0,n\right]  $ with $\left(
Y_{s}^{x;n},Z_{s}^{x;n}\right)  =0,\;$for $s>n;$ hence%
\begin{equation}
Y_{s}^{x;n}=%
{\displaystyle\int_{s}^{n}}
F\left(  X_{r}^{x},Y_{r}^{x;n},Z_{r}^{x;n}\right)  dr+%
{\displaystyle\int_{s}^{n}}
G\left(  X_{r}^{x},Y_{r}^{x;n}\right)  dA_{r}^{x}-%
{\displaystyle\int_{s}^{n}}
Z_{r}^{x;n}dB_{r},\;s\in\left[  0,n\right]  , \label{eli-approx}%
\end{equation}
By Theorem \ref{th:main} the mapping
\begin{equation}
x\longmapsto Y_{0}^{x;n}:\overline{D}\rightarrow\mathbb{R}^{m}\text{ is
continuous}. \label{yxncont}%
\end{equation}

Estimates on the approximating equation (\ref{eli-approx}) and the continuity
result (\ref{yxncont}) yield:

\begin{proposition}
\label{el-p}Under the assumptions (\ref{h3-0}) and $\max\left\{  \left(
\mu_{F}+\ell_{F}^{2}\right)  ,\mu_{G}\right\}  \leq\lambda<0$ there exists a
unique pair $\left(  Y^{x},Z^{x}\right)  \in S_{m}^{0}\left[  0,T\right]
\times\Lambda_{m\times k}^{0}\left(  0,T\right)  $ solution of the BSDE
(\ref{bsde-iii}) in the following sense:%
\begin{equation}
\left\{
\begin{array}
[c]{rr}%
\left(  j\right)  \; & Y_{s}^{x}=Y_{T}^{x}+%
{\displaystyle\int_{s}^{T}}
F\left(  X_{r}^{x},Y_{r}^{x},Z_{r}^{x}\right)  dr+%
{\displaystyle\int_{s}^{T}}
G\left(  X_{r}^{x},Y_{r}^{x}\right)  dA_{r}^{x}-%
{\displaystyle\int_{s}^{T}}
Z_{r}^{x}dB_{r},\smallskip\\
& \;\text{for all }0\leq s\leq T,\medskip\\
\left(  jj\right)  \; & \multicolumn{1}{l}{\mathbb{E~}\sup\limits_{r\geq
0}e^{2\lambda(r+A_{r}^{x})}\left\vert Y_{r}^{x}\right\vert ^{2}+\mathbb{E~}{%
{\displaystyle\int_{0}^{\infty}}
}e^{2\lambda(r+A_{r}^{x})}\left\vert Z_{r}^{x}\right\vert ^{2}dr<\infty
,\medskip}\\
\left(  jjj\right)  \; & \multicolumn{1}{l}{\lim\limits_{N\rightarrow\infty
}\mathbb{E~}\sup\limits_{r\geq N}e^{2\lambda(r+A_{r}^{x})}\left\vert Y_{r}%
^{x}\right\vert ^{2}=0.}%
\end{array}
\right.  \label{bsde-el1}%
\end{equation}
Moreover the mapping%
\begin{equation}
x\longmapsto u\left(  x\right)  =Y_{0}^{x}:\overline{D}\rightarrow
\mathbb{R}^{m}\text{ is continuous.} \label{el-cont}%
\end{equation}

\end{proposition}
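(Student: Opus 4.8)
The strategy is the classical one for infinite-horizon BSDEs: prove that the finite-horizon approximations $(Y^{x;n},Z^{x;n})$ form a Cauchy sequence in the weighted spaces, identify the limit as the solution of \eqref{bsde-iii}, and then pass the continuity statement \eqref{yxncont} to the limit. First I would fix $\lambda\ge\max\{\mu_F+\ell_F^2,\mu_G\}$ with $\lambda<0$ and record the a priori estimate analogous to \eqref{ch5-pvi-4}: applying It\^o's formula to $e^{2\lambda(s+A_s^x)}|Y_s^{x;n}|^2$ and using the monotonicity assumptions \eqref{h3-0}(i),(iv) together with \eqref{h3-0}(ii),(iii),(v), one gets, since $\lambda<0$ makes the weight summable in $s$,
\[
\mathbb{E}\,\sup_{r\ge 0}e^{2\lambda(r+A_r^x)}|Y_r^{x;n}|^2+\mathbb{E}\int_0^\infty e^{2\lambda(r+A_r^x)}|Z_r^{x;n}|^2\,dr\le C,
\]
with $C$ independent of $n$ and $x$ (this uses \eqref{chy5-pvi-2a}(jj) and the boundedness of $\overline D$ exactly as in \eqref{ch5-pvi-4a}). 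The key point is that the terms $b_F(1+|y|)$ and $b_G(1+|y|)$, when multiplied by the exponentially decaying weight, are integrable uniformly in the horizon.

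Next I would prove the Cauchy property. For $n<p$ write the equation satisfied by $\delta Y=Y^{x;p}-Y^{x;n}$, $\delta Z=Z^{x;p}-Z^{x;n}$ on $[0,n]$; at time $n$ the terminal value is $\delta Y_n=Y_n^{x;p}$ (since $Y^{x;n}_n=0$). It\^o's formula for $e^{2\lambda(s+A_s^x)}|\delta Y_s|^2$ on $[0,n]$, together with the monotonicity estimates, gives
\[
\mathbb{E}\,\sup_{0\le r\le n}e^{2\lambda(r+A_r^x)}|\delta Y_r|^2+\mathbb{E}\int_0^n e^{2\lambda(r+A_r^x)}|\delta Z_r|^2\,dr\le C\,\mathbb{E}\,e^{2\lambda(n+A_n^x)}|Y_n^{x;p}|^2 .
\]
By the uniform bound above, $\mathbb{E}\,e^{2\lambda(n+A_n^x)}|Y_n^{x;p}|^2\le \mathbb{E}\,\sup_{r\ge n}e^{2\lambda(r+A_r^x)}|Y_r^{x;p}|^2\to 0$ as $n\to\infty$ uniformly in $p$ (here one needs a tail estimate: re-run the a priori estimate on $[n,\infty)$ to see the right-hand side is controlled by $\mathbb{E}\int_n^\infty e^{2\lambda(r+A_r^x)}(1+|Y_r^{x;p}|)^2(dr+dA_r^x)$, which tends to $0$). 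Hence $(Y^{x;n},Z^{x;n})$ converges in the weighted norm to a limit $(Y^x,Z^x)$; passing to the limit in \eqref{eli-approx} on each fixed $[0,T]$ yields \eqref{bsde-el1}(j), and the estimates pass to the limit giving (jj) and (jjj). Uniqueness follows from the same It\^o-monotonicity computation applied to the difference of two solutions, using (jjj) to kill the boundary term at infinity.

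Finally, for the continuity \eqref{el-cont}: the Cauchy estimate above, evaluated at $s=0$ (where $A_0^x=0$), gives
\[
|Y_0^{x;p}-Y_0^{x;n}|^2\le C\,\sup_{r\ge n}\mathbb{E}\,e^{2\lambda(r+A_r^x)}|Y_r^{x;p}|^2 ,
\]
and I would need this bound to be uniform in $x\in\overline D$ — which it is, because all the constants $C_1,\dots,C_4$ and hence $C$ are uniform in the starting point (as emphasized in the excerpt after the proof of Theorem \ref{th:main}), and \eqref{chy5-pvi-2a}(jj) is uniform in $x$. Therefore $Y_0^{x;n}\to Y_0^x=u(x)$ uniformly in $x$; since each $x\mapsto Y_0^{x;n}$ is continuous by \eqref{yxncont}, the uniform limit $u$ is continuous. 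The main obstacle is establishing the \emph{uniform-in-$x$} tail decay $\sup_{x}\mathbb{E}\,\sup_{r\ge n}e^{2\lambda(r+A_r^x)}|Y_r^{x;n}|^2\to 0$; this is where one must combine the strict negativity of $\lambda$ with the linear-growth bounds \eqref{h3-0}(iii),(v) and the uniform moment control \eqref{chy5-pvi-2a}(jj) on $e^{\lambda A^{x}}$, rather than any single step being computationally heavy.
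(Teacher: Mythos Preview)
Your plan is correct and follows the paper's proof essentially step for step: a priori bound via Lemma~\ref{An_L1}, Cauchy estimate on $[0,n]$ for $Y^{x;n+l}-Y^{x;n}$ with terminal data $Y_n^{x;n+l}$, and then uniform-in-$x$ convergence $Y_0^{x;n}\to Y_0^x$ combined with \eqref{yxncont}. The one refinement worth noting is that the paper runs the a priori estimate on $[s,n]$ with reference point $0$ (using that $F(\cdot,0,0)$ and $G(\cdot,0)$ are bounded on $\overline D$) to obtain directly the explicit rate
\[
\mathbb{E}\sup_{r\ge s}e^{2\lambda(r+A_r^x)}\bigl|Y_r^{x;n}\bigr|^2\;\le\;\frac{C}{|\lambda|}\,e^{2\lambda s},
\]
which delivers both the uniform-in-$(n,x)$ tail decay and the Cauchy bound $\frac{C}{|\lambda|}e^{2\lambda n}$ in one stroke; this sidesteps the mild circularity in your parenthetical tail estimate, whose right-hand side still contains $|Y_r^{x;p}|^2$.
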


\begin{proof}
The existence and uniqueness result for the solution of (\ref{bsde-el1}) was
proved by Pardoux and Zhang in \cite{pa-zh/98}, Theorem 2.1 (the result is
also given in \cite{pa-ra/14}, Section 5.6.1). Proving here the continuity
property (\ref{el-cont}) we obtain, once again, the existence of the solution;
the uniqueness is a easy consequence of Lemma \ref{An_L1} via the assumptions
(\ref{h3-0}) on $F$ and $G$.

Using (\ref{h3-0}) we also deduce by Lemma \ref{An_L1} with $a=1/2$ (or
directly from (\ref{ch5-pvi-4})) that for $0\leq s\leq n:$%
\[%
\begin{array}
[c]{l}%
\mathbb{E~}\sup\limits_{r\in\left[  s,n\right]  }e^{2\lambda(r+A_{r}^{x}%
)}\left\vert Y_{r}^{x;n}\right\vert ^{2}+\mathbb{E~}{%
{\displaystyle\int_{s}^{n}}
}e^{2\lambda(r+A_{r}^{x})}\left\vert Z_{r}^{x;n}\right\vert ^{2}dr\medskip\\
\quad\quad\leq C~\mathbb{E}\bigg[e^{2\lambda(n+A_{n}^{x})}\left\vert
Y_{n}^{x;n}\right\vert ^{2}+\Big(%
{\displaystyle\int_{s}^{n}}
e^{\lambda(r+A_{r}^{x})}\left\vert F\left(  X_{r}^{x},0,0\right)  \right\vert
dr\Big)^{2}\medskip\\
\quad\quad+\Big(%
{\displaystyle\int_{s}^{n}}
e^{\lambda(r+A_{r}^{x})}\left\vert G\left(  X_{r}^{x},0\right)  \right\vert
dA_{r}^{x}\Big)^{2}\bigg]\medskip\\
\quad\quad\leq C^{\prime}~\mathbb{E}~\left(
{\displaystyle\int_{s}^{n}}
e^{\lambda\left(  r+A_{r}^{x}\right)  }(dr+dA_{r}^{x})\right)  ^{2}\medskip\\
\quad\quad\leq\dfrac{C^{\prime}}{\left\vert \lambda\right\vert }%
~\mathbb{E~}e^{2\lambda(s+A_{s}^{x})}\\
\quad\quad\leq\dfrac{C^{\prime}}{\left\vert \lambda\right\vert }~e^{2\lambda
s},
\end{array}
\]
(we also used that $F(X_{r}^{x},0,0)$ and $G(X_{r}^{x},0)$ are uniformly
bounded on the bounded domain $\overline{D}$ ).

Since $\left(  Y_{s}^{x;n},Z_{s}^{x;n}\right)  =0,\;$for $s>n$ we infer that
for all $s\geq0$ and $n\in\mathbb{N},$%
\begin{equation}
\mathbb{E~}\sup\limits_{r\geq s}e^{2\lambda(r+A_{r}^{x})}\left\vert
Y_{r}^{x;n}\right\vert ^{2}+\mathbb{E~}{\int_{s}^{\infty}}e^{2\lambda
(r+A_{r}^{x})}\left\vert Z_{r}^{x;n}\right\vert ^{2}dr\leq\frac{C}{\left\vert
\lambda\right\vert }~e^{2\lambda s}.\label{estim_yxn}%
\end{equation}
If $n,l\in\mathbb{N}$ and $s\in\left[  0,n\right]  ,$ then
\[
Y_{s}^{x;n+l}-Y_{s}^{x;n}=Y_{n}^{x;n+l}+{\displaystyle\int_{s}^{n}%
}d\mathcal{K}_{r}-{\displaystyle\int_{s}^{n}}\left(  Z_{r}^{x;n+l}-Z_{r}%
^{x;n}\right)  dB_{r},
\]
where
\begin{align*}
d\mathcal{K}_{r} &  =\left[  F(X_{r}^{x},Y_{r}^{x;n+l},Z_{r}^{x;n+l})-F\left(
X_{r}^{x},Y_{r}^{x;n},Z_{r}^{x;n}\right)  \right]  dr\\
&  -\left[  G(X_{r}^{x},Y_{r}^{x;n+l})-G\left(  X_{r}^{x},Y_{r}^{x;n}\right)
\right]  dA_{r}^{x}.
\end{align*}
By the assumptions (\ref{h3-0}) we have%
\[%
\begin{array}
[c]{l}%
\left\langle Y_{r}^{x;n+l}-Y_{r}^{x;n}~,d\mathcal{K}_{r}\right\rangle
\medskip\\
\multicolumn{1}{r}{\leq\mu_{F}\left\vert Y_{r}^{x;n+l}-Y_{r}^{x;n}\right\vert
^{2}dr+\ell_{F}\left\vert Y_{r}^{x;n+l}-Y_{r}^{x;n}\right\vert \left\vert
Z_{r}^{x;n+l}-Z_{r}^{x;n}\right\vert dr\smallskip}\\
\multicolumn{1}{r}{+\mu_{G}\left\vert Y_{r}^{x;n+l}-Y_{r}^{x;n}\right\vert
^{2}dA_{r}^{x}\medskip}\\
\leq\dfrac{1}{4}\left\vert Z_{r}^{x;n+l}-Z_{r}^{x;n}\right\vert ^{2}%
dr+\left\vert Y_{r}^{x;n+l}-Y_{r}^{x;n}\right\vert ^{2}\lambda\left(
dr+dA_{r}^{n}\right)  .
\end{array}
\]
Therefore by Lemma \ref{An_L1} (with $a=1/2$) and (\ref{estim_yxn}) we get
\[%
\begin{array}
[c]{l}%
\mathbb{E~}\sup\limits_{r\in\left[  0,n\right]  }e^{2\lambda\left(
r+A_{r}^{x}\right)  }\left\vert Y_{r}^{x;n+l}-Y_{r}^{x;n}\right\vert
^{2}+\mathbb{E~}%
{\displaystyle\int_{0}^{n}}
e^{2\lambda\left(  r+A_{r}^{x}\right)  }\left\vert Z_{r}^{x;n+l}-Z_{r}%
^{x;n}\right\vert ^{2}dr\medskip\\
\quad\quad\leq C~\mathbb{E}~e^{2\lambda\left(  n+A_{n}^{x}\right)  }\left\vert
Y_{n}^{x;n+l}\right\vert ^{2}\medskip\\
\quad\quad\leq\dfrac{C}{\left\vert \lambda\right\vert }~e^{2\lambda n}.
\end{array}
\]
Hence%
\[
\mathbb{E~}\sup\limits_{r\geq0}e^{2\lambda\left(  r+A_{r}^{x}\right)
}\left\vert Y_{r}^{x;n+l}-Y_{r}^{x;n}\right\vert ^{2}+\mathbb{E~}%
{\displaystyle\int_{0}^{\infty}}
e^{2\lambda\left(  r+A_{r}^{x}\right)  }\left\vert Z_{r}^{x;n+l}-Z_{r}%
^{x;n}\right\vert ^{2}dr\leq\dfrac{C}{\left\vert \lambda\right\vert
}~e^{2\lambda n}%
\]
and consequently there exists $\left(  Y_{s}^{x},Z_{s}^{x}\right)  _{s\geq0}$
a pair of progressively measurable stochastic process, $\left(  Y_{s}%
^{x}\right)  _{s\geq0}$ having continuous trajectories, such that for all
$s\geq0$%
\[
\mathbb{E~}\sup\limits_{r\geq s}e^{2\lambda(r+A_{r}^{x})}\left\vert Y_{r}%
^{x}\right\vert ^{2}+\mathbb{E~}{\int_{s}^{\infty}}e^{2\lambda(r+A_{r}^{x}%
)}\left\vert Z_{r}^{x}\right\vert ^{2}dr<\frac{C}{\left\vert \lambda
\right\vert }~e^{2\lambda s}%
\]
and%
\begin{align*}
&  \mathbb{E~}\sup\limits_{r\geq0}e^{2\lambda\left(  r+A_{r}^{x}\right)
}\left\vert Y_{r}^{x}-Y_{r}^{x;n}\right\vert ^{2}+\mathbb{E~}%
{\displaystyle\int_{0}^{\infty}}
e^{2\lambda\left(  r+A_{r}^{x}\right)  }\left\vert Z_{r}^{x}-Z_{r}%
^{x;n}\right\vert ^{2}dr\medskip\\
&  \leq\dfrac{C}{\left\vert \lambda\right\vert }~e^{2\lambda n}\rightarrow
0,\quad\text{as }n\rightarrow\infty.
\end{align*}
Since for all $0\leq T\leq n:$%
\[
Y_{s}^{x;n}=Y_{T}^{x;n}+%
{\displaystyle\int_{s}^{T}}
F\left(  X_{r}^{x},Y_{r}^{x;n},Z_{r}^{x;n}\right)  dr+%
{\displaystyle\int_{s}^{T}}
G\left(  X_{r}^{x},Y_{r}^{x;n}\right)  dA_{r}^{x}-%
{\displaystyle\int_{s}^{T}}
Z_{r}^{x;n}dB_{r},\;s\in\left[  0,n\right]  ,
\]
then passing to limit as $n\rightarrow\infty$ (possibly along a subsequence)
we obtain that $\left(  Y_{s}^{x},Z_{s}^{x}\right)  _{s\geq0}$ is a solution
of (\ref{bsde-el1}).

Let $y,x\in\overline{D}.$ Since%
\begin{align*}
\left\vert Y_{0}^{y}-Y_{0}^{x}\right\vert  &  \leq\left\vert Y_{0}^{y}%
-Y_{0}^{y;n}\right\vert +\left\vert Y_{0}^{y;n}-Y_{0}^{x;n}\right\vert
+\left\vert Y_{0}^{x;n}-Y_{0}^{x}\right\vert \\
&  \leq\tfrac{2~\sqrt{C}}{\sqrt{\left\vert \lambda\right\vert }}~e^{\lambda
n}+\left\vert Y_{0}^{y;n}-Y_{0}^{x;n}\right\vert ,\;\;\text{for all }%
n\in\mathbb{N}.
\end{align*}
and $\lambda<0$, the continuity property (\ref{el-cont}) follows from
(\ref{yxncont}).

\hfill
\end{proof}

We finally deduce that

\begin{remark}
Theorem 5.1 from \cite{pa-zh/98} holds true.
\end{remark}

\section{Viscosity solutions\label{s4}}

\subsection{Parabolic PDEs}

We recall  some results on the viscosity solutions of the PVI
(\ref{ch5-pvi-1}) from \cite{Pa-Ra/98}, \cite{Ma/Pa/Ra/Za:09}, \cite{ma-ra/10},
\cite{pa-ra/14}. At the same time, we formulate the definition of the notion of viscosity solution
of our system of equations.

We assume that the assumptions from Section \ref{s1} and Section \ref{s2} are
satisfied and we let the dimension of the Brownian motion be $k=d.$

Denote $\mathbb{S}^{d}$ the set of symmetric matrices from $\mathbb{R}%
^{d\times d}.$

Let $h:\left[  0,T\right]  \times\overline{D}\rightarrow\mathbb{R}$ be a
continuous function.

A triple $(p,q,X)\in\mathbb{R}\times\mathbb{R}^{d}\times\mathbb{S}^{d}$ is a
parabolic super-jet to $h$, at $\left(  t,x\right)  \in\left[  0,T\right]
\times\overline{D},$ if for all $\left(  s,x^{\prime}\right)  \in\left[
0,T\right]  \times\overline{D},$%
\begin{equation}%
\begin{array}
[c]{r}%
h(s,x^{\prime})\leq h(t,x)+p(s-t)+\langle q,x^{\prime}-x\rangle+\dfrac{1}%
{2}\langle X(x^{\prime}-x),x^{\prime}-x\rangle\medskip\\
+o(|s-t|+|x^{\prime}-x|^{2}).
\end{array}
\label{sup-jet}%
\end{equation}
The set of parabolic super-jets at $\left(  t,x\right)  $ is denoted by
$\mathcal{P}^{2,+}h(t,x);$ the set of parabolic sub-jets is defined by
$\mathcal{P}_{\mathcal{O}}^{2,-}h=-\mathcal{P}_{\mathcal{O}}^{2,+}(-h)$.

First we consider the system (\ref{ch5-pvi-1}) with the functions
$\varphi,\psi:\mathbb{R}^{m}\rightarrow]-\infty,+\infty]$ decoupled on
coordinates as follows $\varphi\left(  u_{1},\ldots,u_{m}\right)  ={\varphi
}_{1}\left(  u_{1}\right)  +\cdots+{\varphi}_{m}\left(  u_{m}\right)  $ and
$\psi\left(  u_{1},\ldots,u_{m}\right)  ={\psi}_{1}\left(  u_{1}\right)
+\cdots+{\psi}_{m}\left(  u_{m}\right)  ,$ where $\varphi_{i},\psi
_{i}:\mathbb{R}\rightarrow]-\infty,+\infty]$ are l.s.c. convex functions;
hence ${\partial\varphi}\left(  u_{1},\ldots,u_{m}\right)  ={\partial\varphi
}_{1}\left(  u_{1}\right)  \times\cdots\times{\partial\varphi}_{m}\left(
u_{m}\right)  $ and similar for $\partial\psi.$

We also assume that $F_{i}$ , the $i-$th coordinate of $F$, depends only on
the $i-$th row of the matrix $Z$.

Consider the system%
\begin{equation}
\left\{
\begin{array}
[c]{rr}%
\left(  a\right)  \; & -\dfrac{\partial u_{i}(t,x)}{\partial t}-\mathcal{L}%
_{t}u_{i}\left(  t,x\right)  +{\partial\varphi}_{i}\big(u_{i}(t,x)\big)\ni
F_{i}\big(t,x,u(t,x),\left(  \nabla u_{i}(t,x)\right)  ^{\ast}%
g(t,x)\big),\smallskip\\
& t\in\left(  0,T\right)  ,\;x\in D,\quad i\in\overline{1,m},\medskip\\
\multicolumn{1}{l}{\left(  b\right)  \;} & \multicolumn{1}{l}{\dfrac{\partial
u_{i}(t,x)}{\partial n}+{\partial\psi}_{i}\big(u_{i}(t,x)\big)\ni
G_{i}\big(t,x,u(t,x)\big),}\\
\multicolumn{1}{l}{} & \multicolumn{1}{l}{\quad\quad\quad\quad\quad\quad
\quad\quad\quad\quad\quad\quad\quad\quad t\in\left(  0,T\right)  ,\;x\in
Bd\left(  \overline{D}\right)  ,\quad i\in\overline{1,m},\medskip}\\
\multicolumn{1}{l}{\left(  c\right)  \;} & \multicolumn{1}{l}{u(T,x)=\kappa
(x),\;\ x\in\overline{{D}},}%
\end{array}
\right.  \label{pvi1}%
\end{equation}

where%
\[
\mathcal{L}_{t}u_{i}\left(  t,x\right)  =\dfrac{1}{2}%
{\displaystyle\sum\limits_{j,l=1}^{d}}
\left(  gg^{\ast}\right)  _{j,l}(t,x)\dfrac{\partial^{2}u_{i}(t,x)}{\partial
x_{j}\partial x_{l}}+%
{\displaystyle\sum\limits_{j=1}^{d}}
f_{j}\left(  t,x\right)  \dfrac{\partial u_{i}(t,x)}{\partial x_{j}}%
\]
$\bigskip$

Define $\Phi_{i},\Gamma_{i}:\left[  0,T\right]  \times\overline{D}%
\times\mathbb{R}^{m}\times\mathbb{R}^{d}\times\mathbb{S}^{d}\rightarrow
\mathbb{R}$, $i\in\overline{1,m},$ to be the functions:%
\begin{equation}%
\begin{array}
[c]{rl}%
\Phi_{i}\left(  t,x,y,q,X\right)  = & \dfrac{1}{2}\mathrm{Tr}\big((gg^{\ast
})(t,x)X\big)+\langle q,f(t,x)\rangle+F_{i}\big(t,x,y,q^{\ast}%
g(t,x)\big)\medskip\\
\Gamma_{i}(t,x,y,q)= & -\langle\nabla\phi(x),q\rangle+G_{i}(t,x,y).
\end{array}
\label{figi}%
\end{equation}

If $u=\left(  u_{1},\ldots,u_{m}\right)  ^{\ast}:\left[  0,T\right]
\times\overline{D}\rightarrow\mathbb{R}^{m}$, then for each $i\in
\overline{1,m}$ we have%
\begin{align*}
\Phi_{i}\left(  t,x,u\left(  t,x\right)  ,\nabla u_{i}(t,x),D^{2}%
u_{i}(t,x)\right)   &  =\mathcal{L}_{t}u_{i}(t,x)+F_{i}\big(t,x,u\left(
t,x\right)  ,\left(  \nabla u_{i}(t,x)\right)  ^{\ast}g(t,x)\big),\;\text{and}%
\medskip\\
\Gamma_{i}(t,x,u\left(  t,x\right)  ,\nabla u_{i}(t,x))  &  =-\frac{\partial
u_{i}(t,x)}{\partial n}+G_{i}(t,x,u\left(  t,x\right)  ).
\end{align*}

We put the notations  $a\wedge b\overset{def}{=}\min\left\{  a,b\right\}  $
and $a\vee b\overset{def}{=}\max\left\{  a,b\right\}  .$

The following results hold.

\begin{theorem}
\label{PZ}(Pardoux, Zhang \cite{pa-zh/98}: Theorem 4.3; Pardoux,
R\u{a}\c{s}canu \cite{pa-ra/14} : Theorem 5.43) Consider the parabolic system
(\ref{pvi1}) with $\varphi=\psi=0.$ Then the continuous function $u:\left[
0,T\right]  \times\overline{{D}}\rightarrow\mathbb{R}^{m}$ defined by
(\ref{ch5-pvi-6}) is a viscosity solution of the parabolic partial
differential system (\ref{pvi1}) i.e.
\[
\text{\ }u(T,x)=\kappa\left(  x\right)  ,\;\forall~x\in\overline{{D}},
\]
and $u$ is a viscosity sub-solution that is, for any $i\in\overline{1,m}$ :%
\[%
\begin{array}
[c]{ll}%
\left(  a\right)  \quad & \text{for any }\left(  t,x\right)  \in
(0,T)\times\overline{{D}}\text{, any }(p,q,X)\in\mathcal{P}^{2,+}%
u_{i}(t,x):\smallskip\\
& \quad\quad\quad\quad\quad\quad p+\Phi_{i}\left(  t,x,u\left(  t,x\right)
,q,X\right)  \geq0,\;\medskip\\
\left(  b\right)  \quad & \text{for any }\left(  t,x\right)  \in(0,T)\times
Bd\left(  \overline{{D}}\right)  \text{, any }(p,q,X)\in\mathcal{P}^{2,+}%
u_{i}(t,x):\smallskip\\
& \quad\quad\quad\quad\quad\quad\left[  p+\Phi_{i}\left(  t,x,u\left(
t,x\right)  ,q,X\right)  \right]  \vee\Gamma_{i}\left(  t,x,u\left(
t,x\right)  ,q\right)  \geq0,
\end{array}
\]
together with $u$ is a viscosity super-solution that is, for any
$i\in\overline{1,m}$ :%
\[%
\begin{array}
[c]{ll}%
\left(  c\right)  \quad & \text{for any }\left(  t,x\right)  \in
(0,T)\times\overline{{D}}\text{, any }(p,q,X)\in\mathcal{P}^{2,-}%
u_{i}(t,x):\smallskip\\
& \quad\quad\quad\quad\quad\quad p+\Phi_{i}\left(  t,x,u\left(  t,x\right)
,q,X\right)  \leq0,\;\medskip\\
\left(  d\right)  \quad & \text{for any }\left(  t,x\right)  \in(0,T)\times
Bd\left(  \overline{{D}}\right)  \text{, any }(p,q,X)\in\mathcal{P}^{2,-}%
u_{i}(t,x):\smallskip\\
& \quad\quad\quad\quad\quad\quad\left[  p+\Phi_{i}\left(  t,x,u\left(
t,x\right)  ,q,X\right)  \right]  \wedge\Gamma_{i}\left(  t,x,u\left(
t,x\right)  ,q\right)  \leq0.
\end{array}
\]

\end{theorem}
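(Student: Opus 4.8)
The plan is to check the three defining properties of a viscosity solution separately, always using the Markov representation $Y^{t,x}_s=u(s,X^{t,x}_s)$, the continuity of $u$ furnished by Theorem \ref{th:main}, It\^{o}'s formula along the reflected diffusion $(X^{t,x},A^{t,x})$, and a linearisation together with a Girsanov change of measure for the resulting one-dimensional comparison. The terminal condition is immediate: $X^{T,x}_{s\wedge T}=x$ gives $Y^{T,x}_T=\kappa(X^{T,x}_T)=\kappa(x)$, i.e. $u(T,x)=\kappa(x)$, and continuity up to $t=T$ is part of Theorem \ref{th:main}. For the differential inequalities we verify the interior statements $(a)$, $(c)$ at points of $(0,T)\times D$ and the boundary statements $(b)$, $(d)$ at points of $(0,T)\times Bd(\overline D)$, as is usual for Neumann-type problems.

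For the sub-solution property (the super-solution one being symmetric) fix $i$, a point $(t,x)$, and $(p,q,X)\in\mathcal P^{2,+}u_i(t,x)$; choose $w\in C^{1,2}$ with $w(t,x)=u_i(t,x)$, $\partial_tw(t,x)=p$, $\nabla w(t,x)=q$, $D^2w(t,x)=X$ and $w\ge u_i$ on a parabolic neighbourhood of $(t,x)$ in $[0,T]\times\overline D$. Put $\bar Y_s=w(s,X^{t,x}_s)-\big(Y^{t,x}_s\big)_i$, so that $\bar Y_t=0$ and $\bar Y_s\ge0$ as long as $(s,X^{t,x}_s)$ remains in that neighbourhood. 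Writing $X_s=X^{t,x}_s$, $A_s=A^{t,x}_s$, evaluating $w$ and its derivatives at $(s,X_s)$, and using $dX_s=f\,ds-\nabla\phi(X_s)\,dA_s+g\,dB_s$ together with the backward equation \eqref{ch5-pvi-3} in the case $\varphi=\psi\equiv0$ (so $U=V=0$ and $d(Y^{t,x}_s)_i=-F_i\,ds-G_i\,dA_s+(Z^{t,x}_s)_i\,dB_s$), It\^{o}'s formula yields
\[
d\bar Y_s=\big[\partial_tw+\Phi_i(s,X_s,Y^{t,x}_s,\nabla w,D^2w)+\rho_s\big]\,ds+\Gamma_i(s,X_s,Y^{t,x}_s,\nabla w)\,dA_s+\langle\widehat Z_s,dB_s\rangle,
\]
where $\widehat Z_s=(\nabla w(s,X_s))^{\ast}g(s,X_s)-(Z^{t,x}_s)_i$ and, by Lipschitz continuity of $F_i$ in its last variable (see \eqref{h3-0}), $\rho_s=-\langle a_s,\widehat Z_s\rangle$ for some progressively measurable $a_s$ with $|a_s|\le\ell_F$. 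Under the equivalent probability $d\widetilde{\mathbb P}=\mathcal E\big(\int_t^{\cdot}\langle a_r,dB_r\rangle\big)\,d\mathbb P$ the process $\widetilde B_s=B_s-\int_t^s a_r\,dr$ is a Brownian motion and the $\rho_s\,ds$ term disappears.

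Now suppose the claimed inequality fails. If $x\in D$ and $p+\Phi_i(t,x,u(t,x),q,X)<0$, take $\rho$ small enough that $\bar B(x,\rho)\subset D$; then $A^{t,x}$ stays constant up to the exit time $\sigma$ of $(s,X_s)$ from $[t,t+h]\times\bar B(x,\rho)$, and by continuity of $f,g,F_i,\phi$ and of $u$ the drift of $\bar Y$ on $[t,\sigma]$ is $\le-\delta<0$ for $h,\rho$ small, so $\widetilde{\mathbb E}\,\bar Y_{\sigma\wedge(t+h)}\le-\delta\,\widetilde{\mathbb E}[\sigma\wedge(t+h)-t]<0$, contradicting $\bar Y\ge0$ and $\sigma>t$ a.s. If $x\in Bd(\overline D)$ and both $p+\Phi_i(t,x,u(t,x),q,X)<0$ and $\Gamma_i(t,x,u(t,x),q)<0$, the same localisation makes the $ds$-drift and the coefficient of $dA_s$ in the display both $\le-\delta$ (the latter is needed only where $dA_s>0$, i.e. on $Bd(\overline D)$, and $\Gamma_i(s,X_s,Y^{t,x}_s,\nabla w(s,X_s))\to\Gamma_i(t,x,u(t,x),q)$ by continuity of $u$), whence $\widetilde{\mathbb E}\,\bar Y_{\sigma\wedge(t+h)}\le-\delta\,\widetilde{\mathbb E}[\sigma\wedge(t+h)-t]-\delta\,\widetilde{\mathbb E}[A_{\sigma\wedge(t+h)}]<0$, again impossible. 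The super-solution statements $(c)$ and $(d)$ are obtained identically with a test function $w\le u_i$, $(p,q,X)\in\mathcal P^{2,-}u_i(t,x)$, the reversed strict inequalities, and $\bar Y\le0$.

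The main obstacle is the boundary behaviour. Since $u$ is only known to be continuous, $Z^{t,x}$ cannot be identified with $(\nabla u_i)^{\ast}g$; the linearisation plus Girsanov step circumvents this, reducing everything to the sign of the $ds$- and $dA_s$-drifts in the display, which are precisely the residuals $p+\Phi_i$ and $\Gamma_i$. The second difficulty is that the reflection contributes the extra $dA^{t,x}$ term, so at a boundary point one can only contradict the simultaneous failure of both residuals, which is what produces the $\vee$ (resp. $\wedge$ in the super-solution case) in the statement. Crucially, the whole localisation is legitimate only because $u$ is continuous — exactly the point left open in \cite{pa-zh/98} and now supplied by Theorem \ref{th:main}; once it is granted, the uniform estimates \eqref{ch5-pvi-4a}--\eqref{ch5-pvi-5a} and the a.s. joint continuity \eqref{chy5-pvi-2aa} of $(X^{t,x},A^{t,x})$ provide all the integrability needed to justify the expectations and the passage $h,\rho\to0$.
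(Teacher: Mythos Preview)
The paper does not supply its own proof of this theorem; it is recalled from the references \cite{pa-zh/98} and \cite{pa-ra/14}, the whole point of the present paper being to provide the missing continuity result (Theorem~\ref{th:main}) that those references used without justification. Your argument is essentially the classical one from those sources: represent $u_i(s,X^{t,x}_s)$ via the BSDE, apply It\^o's formula to $w(s,X^{t,x}_s)$ along the reflected diffusion, linearise the $z$--dependence of $F_i$ and remove it by a Girsanov change of measure, then derive a contradiction from the signs of the $ds$-- and $dA_s$--drifts. This is correct, and your observation that the localisation and the identification $Y^{t,x}_s=u(s,X^{t,x}_s)$ require precisely the continuity of $u$ furnished by Theorem~\ref{th:main} is exactly the gap the paper is closing.

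One small point of presentation: the statement as printed has condition~(a) for $(t,x)\in(0,T)\times\overline{D}$, while your contradiction argument for~(a) uses that $A^{t,x}$ is locally constant and thus applies only at interior points $x\in D$; at boundary points you establish only the weaker alternative~(b). This is the standard formulation for Neumann problems and is what is actually proved in \cite{pa-zh/98}, so the discrepancy lies in the paper's wording rather than in your argument.
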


\begin{theorem}
\label{MR}(Maticiuc, R\u{a}\c{s}canu \cite{ma-ra/10}: Theorem 5; Pardoux,
R\u{a}\c{s}canu \cite{pa-ra/14} : Theorem 5.81) The continuous function
$u:\left[  0,T\right]  \times\overline{{D}}\rightarrow\mathbb{R}^{m}$ defined
by (\ref{ch5-pvi-6}) is a viscosity solution of the parabolic differential
system (\ref{pvi1}) on $\overline{D}$ i.e.
\[
\left\vert \text{\ }%
\begin{array}
[c]{l}%
u(T,x)=\kappa\left(  x\right)  ,\;\forall~x\in\overline{{D}},\medskip\\
u(t,x)\in Dom\left(  \varphi\right)  ,\ \ \forall{(t,x)}\in(0,T)\times
\overline{{D}},\medskip\\
u(t,x)\in Dom\left(  \psi\right)  ,\ \ \ \forall{(t,x)}\in(0,T)\times
Bd\left(  \overline{D}\right)  ,
\end{array}
\right.
\]
and $u$ is a viscosity sub-solution that is, for any $i\in\overline{1,m}$ :%
\[%
\begin{array}
[c]{ll}%
\left(  a\right)  \quad & \text{for any }\left(  t,x\right)  \in
(0,T)\times\overline{{D}}\text{, any }(p,q,X)\in\mathcal{P}^{2,+}%
u_{i}(t,x):\smallskip\\
& \quad\quad\quad\quad\quad\quad p+\Phi_{i}\left(  t,x,u\left(  t,x\right)
,q,X\right)  \geq\left(  \varphi_{i}\right)  _{-}^{\prime}\big(u_{i}\left(
t,x\right)  \big),\medskip\\
\left(  b\right)  \quad & \text{for any }\left(  t,x\right)  \in(0,T)\times
Bd\left(  \overline{{D}}\right)  \text{, any }(p,q,X)\in\mathcal{P}^{2,+}%
u_{i}(t,x):\smallskip\\
& \quad\quad\quad\quad\quad\quad p+\Phi_{i}\left(  t,x,u\left(  t,x\right)
,q,X\right)  \geq\left(  \varphi_{i}\right)  _{-}^{\prime}\big(u_{i}\left(
t,x\right)  \big),\;\;\text{or}\smallskip\\
& \quad\quad\quad\quad\quad\quad\Gamma_{i}\left(  t,x,u\left(  t,x\right)
,q\right)  \geq\left(  \psi_{i}\right)  _{-}^{\prime}\big(u_{i}\left(
t,x\right)  \big)
\end{array}
\]
together with $u$ is a viscosity super-solution that is, for any
$i\in\overline{1,m}$ :%
\[%
\begin{array}
[c]{ll}%
\left(  c\right)  \quad & \text{for any }\left(  t,x\right)  \in
(0,T)\times\overline{{D}}\text{, any }(p,q,X)\in\mathcal{P}^{2,-}%
u_{i}(t,x):\smallskip\\
& \quad\quad\quad\quad\quad\quad p+\Phi_{i}\left(  t,x,u\left(  t,x\right)
,q,X\right)  \leq\left(  \varphi_{i}\right)  _{+}^{\prime}\big(u_{i}\left(
t,x\right)  \big),\medskip\\
\left(  d\right)  \quad & \text{for any }\left(  t,x\right)  \in(0,T)\times
Bd\left(  \overline{{D}}\right)  \text{, any }(p,q,X)\in\mathcal{P}^{2,-}%
u_{i}(t,x):\smallskip\\
& \quad\quad\quad\quad\quad\quad p+\Phi_{i}\left(  t,x,u\left(  t,x\right)
,q,X\right)  \leq\left(  \varphi_{i}\right)  _{+}^{\prime}\big(u_{i}\left(
t,x\right)  \big),\;\;\text{or}\smallskip\\
& \quad\quad\quad\quad\quad\quad\Gamma_{i}\left(  t,x,u\left(  t,x\right)
,q\right)  \leq\left(  \psi_{i}\right)  _{+}^{\prime}\big(u_{i}\left(
t,x\right)  \big)
\end{array}
\]

\end{theorem}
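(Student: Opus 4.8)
The plan is to prove Theorem~\ref{MR} by penalization, reducing it to the case $\varphi=\psi\equiv0$ already recorded in Theorem~\ref{PZ}. For $\varepsilon>0$ let $\nabla\varphi_{i,\varepsilon},\nabla\psi_{i,\varepsilon}:\mathbb{R}\to\mathbb{R}$ be the Yosida approximations of $\partial\varphi_{i},\partial\psi_{i}$ (as in \eqref{compatib. assumption}, so they are Lipschitz, nondecreasing, and $\nabla\varphi_{i,\varepsilon}(y)\in\partial\varphi_{i}(y-\varepsilon\nabla\varphi_{i,\varepsilon}(y))$), and put $F^{\varepsilon}_{i}(t,x,y,z)=F_{i}(t,x,y,z)-\nabla\varphi_{i,\varepsilon}(y_{i})$ and $G^{\varepsilon}_{i}(t,x,y)=G_{i}(t,x,y)-\nabla\psi_{i,\varepsilon}(y_{i})$. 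Since $-\nabla\varphi_{i,\varepsilon}$ and $-\nabla\psi_{i,\varepsilon}$ are non-increasing, $(F^{\varepsilon},G^{\varepsilon})$ satisfies \eqref{h3-0} for every fixed $\varepsilon$ (with $\mu_{F^{\varepsilon}}\le\mu_{F}$, $\mu_{G^{\varepsilon}}\le\mu_{G}$, $\ell_{F^{\varepsilon}}=\ell_{F}$), so Theorem~\ref{PZ} applies to \eqref{pvi1} with $\varphi=\psi=0$ and data $(F^{\varepsilon},G^{\varepsilon},\kappa)$: the function $u^{\varepsilon}$ attached to the corresponding forward--backward system by \eqref{ch5-pvi-6} is continuous and solves, in the viscosity sense, the penalized system obtained from Theorem~\ref{PZ} by replacing $F_{i}$ with $F_{i}-\nabla\varphi_{i,\varepsilon}$ and $G_{i}$ with $G_{i}-\nabla\psi_{i,\varepsilon}$. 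Explicitly, for $(p,q,X)\in\mathcal{P}^{2,+}u^{\varepsilon}_{i}(t,x)$ with $x\in D$ one has $p+\Phi_{i}(t,x,u^{\varepsilon}(t,x),q,X)\ge\nabla\varphi_{i,\varepsilon}(u^{\varepsilon}_{i}(t,x))$, and the reverse inequality for subjets, with $\Phi_{i},\Gamma_{i}$ as in \eqref{figi}; on $Bd(\overline{D})$ the same statements hold with $[\,p+\Phi_{i}-\nabla\varphi_{i,\varepsilon}\,]$ joined to $[\,\Gamma_{i}-\nabla\psi_{i,\varepsilon}\,]$ by $\vee$ (resp.\ $\wedge$).

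Next I would show that $u^{\varepsilon}\to u$ uniformly on $[0,T]\times\overline{D}$, where $u$ is the function \eqref{ch5-pvi-6} of the original problem. The crucial point is that the a priori estimate \eqref{ch5-pvi-4} and its consequences \eqref{ch5-pvi-4a}--\eqref{ch5-pvi-5a} hold for the penalized solutions $(Y^{t,x,\varepsilon},Z^{t,x,\varepsilon})$ with constants independent of \emph{both} $(t,x)$ and $\varepsilon$; this is exactly what the compatibility assumptions \eqref{compatib. assumption} together with \eqref{ca1} guarantee, since they control $\mathbb{E}\int_{0}^{T}|\nabla\varphi_{\varepsilon}(Y^{\varepsilon}_{r})|^{2}dr$ and $\mathbb{E}\int_{0}^{T}|\nabla\psi_{\varepsilon}(Y^{\varepsilon}_{r})|^{2}dA^{t,x}_{r}$ uniformly in $\varepsilon$ (here $\nabla\varphi_{\varepsilon}=(\nabla\varphi_{1,\varepsilon},\dots,\nabla\varphi_{m,\varepsilon})$ by the decoupling). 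Writing the backward equation for $Y^{t,x,\varepsilon}-Y^{t,x}$, using the identity $|y-(y-\varepsilon\nabla\varphi_{\varepsilon}(y))|=\varepsilon|\nabla\varphi_{\varepsilon}(y)|$ and the monotonicity \eqref{mon_yk} of the measures $dK$, and then applying Lemma~\ref{An_L1} with $a=1/2$, yields $\sup_{(t,x)}\mathbb{E}\sup_{r\in[0,T]}|Y^{t,x,\varepsilon}_{r}-Y^{t,x}_{r}|^{2}\to0$; hence $u^{\varepsilon}\to u$ pointwise, and uniform convergence follows from the equicontinuity of $\{u^{\varepsilon}\}_{\varepsilon>0}$, obtained by rerunning the proof of Theorem~\ref{th:main} with the uniform penalized estimates. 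In passing this re-proves that $u$ is continuous.

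The decisive step is to pass to the limit $\varepsilon\to0$ in the viscosity inequalities for $u^{\varepsilon}$. By the standard stability of viscosity sub-/super-solutions under locally uniform convergence, for a point $(t_{0},x_{0})$ and $(p,q,X)\in\mathcal{P}^{2,+}u_{i}(t_{0},x_{0})$ there exist $(t_{\varepsilon},x_{\varepsilon})\to(t_{0},x_{0})$ and $(p_{\varepsilon},q_{\varepsilon},X_{\varepsilon})\in\mathcal{P}^{2,+}u^{\varepsilon}_{i}(t_{\varepsilon},x_{\varepsilon})$ with $(p_{\varepsilon},q_{\varepsilon},X_{\varepsilon})\to(p,q,X)$ and $u^{\varepsilon}(t_{\varepsilon},x_{\varepsilon})\to u(t_{0},x_{0})$. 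From $\nabla\varphi_{i,\varepsilon}(y')\in\partial\varphi_{i}(y'-\varepsilon\nabla\varphi_{i,\varepsilon}(y'))$, the monotonicity of $\nabla\varphi_{i,\varepsilon}$, and the left-(resp.\ right-)continuity of $(\varphi_{i})'_{-}$ (resp.\ $(\varphi_{i})'_{+}$), one shows $\liminf_{\varepsilon\to0}\nabla\varphi_{i,\varepsilon}(u^{\varepsilon}_{i}(t_{\varepsilon},x_{\varepsilon}))\ge(\varphi_{i})'_{-}(u_{i}(t_{0},x_{0}))$ and $\limsup_{\varepsilon\to0}\nabla\varphi_{i,\varepsilon}(u^{\varepsilon}_{i}(t_{\varepsilon},x_{\varepsilon}))\le(\varphi_{i})'_{+}(u_{i}(t_{0},x_{0}))$, and likewise for $\psi_{i}$. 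Inserting the converging jets into the interior subsolution inequality $p_{\varepsilon}+\Phi_{i}(t_{\varepsilon},x_{\varepsilon},u^{\varepsilon}(t_{\varepsilon},x_{\varepsilon}),q_{\varepsilon},X_{\varepsilon})\ge\nabla\varphi_{i,\varepsilon}(u^{\varepsilon}_{i}(t_{\varepsilon},x_{\varepsilon}))$ and letting $\varepsilon\to0$ gives $p+\Phi_{i}(t_{0},x_{0},u(t_{0},x_{0}),q,X)\ge(\varphi_{i})'_{-}(u_{i}(t_{0},x_{0}))$, i.e.\ part (a) of Theorem~\ref{MR}; on $Bd(\overline{D})$ one passes to the limit along a subsequence on which a fixed term of the $\vee$ stays nonnegative, which produces the ``or'' of part (b). Parts (c) and (d) are obtained identically using $\mathcal{P}^{2,-}$, the $\wedge$, and $(\cdot)'_{+}$. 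Finally $u(T,x)=\kappa(x)$ is immediate from $Y^{t,x}_{T}=\kappa(X^{t,x}_{T})$, and the domain constraints $u(t,x)\in Dom(\varphi)$ on $(0,T)\times\overline{D}$ and $u(t,x)\in Dom(\psi)$ on $(0,T)\times Bd(\overline{D})$ follow, via Fatou and $\varphi_{\varepsilon}\uparrow\varphi$, $\psi_{\varepsilon}\uparrow\psi$, from the uniform-in-$\varepsilon$ bounds of \eqref{ch5-pvi-4} (applied to the penalized system) together with the lower semicontinuity of $\varphi,\psi$.

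The main obstacle is this last step: the penalization terms $\nabla\varphi_{i,\varepsilon}(u^{\varepsilon}_{i})$, $\nabla\psi_{i,\varepsilon}(u^{\varepsilon}_{i})$ do not converge --- they blow up wherever $u_{i}$ exits $Dom(\partial\varphi_{i})$ --- so one cannot pass to the limit term by term; they must be replaced by the one-sided derivatives, which is precisely why the conclusion takes the asymmetric ``$\ge(\varphi_{i})'_{-}$ / $\le(\varphi_{i})'_{+}$'' and ``or'' form, and this requires the sandwich estimates on $\nabla\varphi_{i,\varepsilon}$ above and careful bookkeeping of the $\vee/\wedge$ at the boundary. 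A second, indispensable technical point is that the estimates \eqref{ch5-pvi-4}--\eqref{ch5-pvi-5a} be genuinely uniform in $\varepsilon$: the naive Lipschitz and linear-growth constants of $F^{\varepsilon},G^{\varepsilon}$ degenerate as $\varepsilon\to0$, so this uniformity cannot come from Theorem~\ref{PZ} alone and must be read off from \eqref{ca1} and the compatibility assumptions \eqref{compatib. assumption}.
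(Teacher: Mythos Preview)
The paper does not prove Theorem~\ref{MR}: it is stated as a quotation of results already established in \cite{ma-ra/10} (Theorem~5) and \cite{pa-ra/14} (Theorem~5.81), and no argument is given here beyond the continuity of $u$ supplied by Theorem~\ref{th:main}. So there is no ``paper's own proof'' to compare with.

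That said, your penalization scheme is exactly the route taken in those cited references: one replaces $\partial\varphi_{i},\partial\psi_{i}$ by their Yosida regularizations, invokes the $\varphi=\psi\equiv0$ case (Theorem~\ref{PZ}) for each $\varepsilon$, obtains $\varepsilon$--uniform a priori bounds from the compatibility assumptions \eqref{ca1}--\eqref{compatib. assumption}, shows $Y^{t,x,\varepsilon}\to Y^{t,x}$ uniformly in $(t,x)$, and then passes to the limit in the viscosity inequalities using the one--sided behaviour of $\nabla\varphi_{i,\varepsilon}$. Your identification of the two delicate points (the non--convergence of the Yosida terms, forcing the asymmetric $(\varphi_{i})'_{\pm}$ formulation, and the need for genuinely $\varepsilon$--independent constants in \eqref{ch5-pvi-4}--\eqref{ch5-pvi-5a}) is accurate and matches the cited proofs. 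One small simplification: once you have $\sup_{(t,x)}\mathbb{E}\sup_{r}|Y^{t,x,\varepsilon}_{r}-Y^{t,x}_{r}|^{2}\to0$, uniform convergence of $u^{\varepsilon}$ to $u$ on $[0,T]\times\overline{D}$ is immediate (evaluate at $r=t$), so the detour through equicontinuity of $\{u^{\varepsilon}\}$ is unnecessary; the continuity of the limit $u$ is already provided by Theorem~\ref{th:main}.
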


\begin{theorem}
\label{PR}(Pardoux, R\u{a}\c{s}canu \cite{Pa-Ra/98} : Theorem 4.1) Assume that
$D\ =\mathbb{R}^{d}$ (the system (\ref{pvi1}) is on $\mathbb{R}^{d}$ without
boundary condition and in (\ref{ch5-pvi-2}) and (\ref{ch5-pvi-3}) $A^{t,x}=0,$
$G=0,$ $\psi=0$). Then the continuous function $u:\left[  0,T\right]
\times\mathbb{R}^{d}\rightarrow\mathbb{R}^{m}$ defined by (\ref{ch5-pvi-6}) is
a viscosity solution of the parabolic differential system (\ref{pvi1}-$\left(
a\right)  \&\left(  c\right)  $) on $\mathbb{R}^{d}$ i.e.
\[
\left\vert \text{\ }%
\begin{array}
[c]{l}%
u(T,x)=\kappa\left(  x\right)  ,\;\forall~x\in\mathbb{R}^{d},\medskip\\
u(t,x)\in Dom\left(  \varphi\right)  ,\ \ \forall{(t,x)}\in(0,T)\times
\mathbb{R}^{d},
\end{array}
\right.
\]
\newline and for any $i\in\overline{1,m}$, any $\left(  t,x\right)
\in(0,T)\times\mathbb{R}^{d}$:%
\begin{align*}
p+\Phi_{i}\left(  t,x,u\left(  t,x\right)  ,q,X\right)   &  \geq\left(
\varphi_{i}\right)  _{-}^{\prime}\big(u_{i}\left(  t,x\right)  \big),\quad
\text{for all }(p,q,X)\in\mathcal{P}^{2,+}u_{i}(t,x),\;\text{and}\medskip\\
p+\Phi_{i}\left(  t,x,u\left(  t,x\right)  ,q,X\right)   &  \leq\left(
\varphi_{i}\right)  _{+}^{\prime}\big(u_{i}\left(  t,x\right)  \big),\quad
\text{for all }(p,q,X)\in\mathcal{P}^{2,-}u_{i}(t,x).
\end{align*}

\end{theorem}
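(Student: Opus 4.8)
For the interior equations $(a)$ and $(c)$ of~(\ref{pvi1}), the plan is to obtain Theorem~\ref{PR} as the reflection-free specialization of the construction of Sections~\ref{s2}--\ref{s3}. With $D=\mathbb{R}^{d}$ there is no reflecting term, so $A^{t,x}\equiv0$, and, since also $G\equiv0$ and $\psi\equiv0$, the BSVI~(\ref{ch5-pvi-3}) collapses to
\[
Y_{s}^{t,x}+\int_{s}^{T}U_{r}^{t,x}\,dr=\kappa(X_{T}^{t,x})+\int_{s}^{T}F\big(r,X_{r}^{t,x},Y_{r}^{t,x},Z_{r}^{t,x}\big)\,dr-\int_{s}^{T}Z_{r}^{t,x}\,dB_{r},\quad U_{r}^{t,x}\in\partial\varphi(Y_{r}^{t,x})\ \text{a.e.,}
\]
driven by the plain Itô diffusion $X^{t,x}$ of~(\ref{ch5-pvi-2}). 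The continuity of $u(t,x)=Y_{t}^{t,x}$ is Theorem~\ref{th:main}; here its proof is much simpler, since the difficulty caused by the dependence of the reflecting process $A^{t_{n},x_{n}}$ on the starting point disappears ($A\equiv0$), while $(t,x)\mapsto X^{t,x}$ is continuous into $S_{d}^{p}[0,T]$ by the classical Gronwall estimate for the forward SDE under~(\ref{sp-30-ip}). So only the viscosity-solution part of the statement remains to be proved.

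The terminal and domain conditions are immediate: taking $s=T$ gives $u(T,x)=Y_{T}^{T,x}=\kappa(X_{T}^{T,x})=\kappa(x)$; and estimate~(\ref{ch5-pvi-4a}) (with $A\equiv0$) gives $\mathbb{E}\int_{0}^{T}\varphi(Y_{r}^{t,x})\,dr<\infty$, hence $\varphi(Y_{r}^{t,x})<\infty$ for a.e.\ $r$, $\mathbb{P}$-a.s., so from the Markov identity $u(r,X_{r}^{t,x})=Y_{r}^{t,x}$, the continuity of $u$ and the lower semicontinuity of $\varphi$ one obtains $u(t,x)\in Dom(\varphi)$ for every $(t,x)\in(0,T)\times\mathbb{R}^{d}$.

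For the sub/super-solution inequalities I would run the standard test-function scheme of \cite{pa-pe/92,Pa-Ra/98}, adapted to the multivalued term, arguing the sub-solution case (the super-solution case being symmetric). Fix $i\in\overline{1,m}$, $(t,x)\in(0,T)\times\mathbb{R}^{d}$ and $(p,q,X)\in\mathcal{P}^{2,+}u_{i}(t,x)$, and assume for contradiction that $p+\Phi_{i}(t,x,u(t,x),q,X)<(\varphi_{i})_{-}^{\prime}(u_{i}(t,x))$. Localizing the superjet, pick $\theta\in C^{1,2}([0,T]\times\mathbb{R}^{d})$ with $\theta\ge u_{i}$ on a neighbourhood $\mathcal{V}$ of $(t,x)$, $\theta(t,x)=u_{i}(t,x)$ and $(\partial_{t}\theta,\nabla\theta,D^{2}\theta)(t,x)=(p,q,X)$. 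In differential form the BSDE reads $dY_{s}^{t,x,i}=\big(U_{s}^{t,x,i}-F_{i}(s,X_{s}^{t,x},Y_{s}^{t,x},Z_{s}^{t,x})\big)ds+Z_{s}^{t,x,i}\,dB_{s}$ with $U_{s}^{t,x,i}\in\partial\varphi_{i}(Y_{s}^{t,x,i})=\big[(\varphi_{i})_{-}^{\prime}(Y_{s}^{t,x,i}),(\varphi_{i})_{+}^{\prime}(Y_{s}^{t,x,i})\big]$. Using~(\ref{h3-0}-$ii$) to linearize $F_{i}$ in $z$ about $(\nabla\theta(s,X_{s}^{t,x}))^{\ast}g(s,X_{s}^{t,x})$, a Girsanov change of measure $\widetilde{\mathbb{P}}$ absorbs the resulting $Z$-term, and Itô's formula applied to $s\mapsto Y_{s}^{t,x,i}-\theta(s,X_{s}^{t,x})$ shows that its $\widetilde{\mathbb{P}}$-drift is, for a.e.\ $s$ in some short interval $[t,t+h]$ and on the event that $X^{t,x}$ stays in $\mathcal{V}$, bounded below by a positive constant; this uses $U_{s}^{t,x,i}\ge(\varphi_{i})_{-}^{\prime}(Y_{s}^{t,x,i})$, the lower semicontinuity of the nondecreasing left-continuous function $(\varphi_{i})_{-}^{\prime}$, the continuity of $\theta$, $F_{i}$, $\Phi_{i}$ and of the trajectories of $X^{t,x}$, $Y^{t,x}$ with $X_{t}^{t,x}=x$, $Y_{t}^{t,x,i}=u_{i}(t,x)$, and the standing contradiction hypothesis. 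Integrating between $t$ and $\sigma:=(t+h)\wedge\inf\{s>t:(s,X_{s}^{t,x})\notin\mathcal{V}\}$ and taking $\widetilde{\mathbb{E}}$ gives $\widetilde{\mathbb{E}}[Y_{\sigma}^{t,x,i}-\theta(\sigma,X_{\sigma}^{t,x})]>Y_{t}^{t,x,i}-\theta(t,x)=0$, which contradicts $Y_{\sigma}^{t,x,i}-\theta(\sigma,X_{\sigma}^{t,x})=u_{i}(\sigma,X_{\sigma}^{t,x})-\theta(\sigma,X_{\sigma}^{t,x})\le0$ on $\mathcal{V}$. Reversing the inequalities, using subjets, $\theta\le u_{i}$ and the upper semicontinuous $(\varphi_{i})_{+}^{\prime}$ gives the super-solution property.

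The main obstacle, and the only point where this goes beyond the Lipschitz-coefficient case, is the handling of the multivalued subgradient: because $\partial\varphi_{i}(y)=[(\varphi_{i})_{-}^{\prime}(y),(\varphi_{i})_{+}^{\prime}(y)]$, one must keep $U^{t,x}$ as the density of a measure throughout Itô's formula and pass to the limit on the shrinking interval $[t,t+h]$ only at the very end, exploiting the semicontinuity of the one-sided derivatives together with the path-continuity of $Y^{t,x}$. The localization of the jet to a $C^{1,2}$ test function and the Girsanov linearization removing the $Z$-dependence of $F_{i}$ are routine.
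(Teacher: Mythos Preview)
The paper does not give its own proof of this theorem: it is stated as a citation of \cite{Pa-Ra/98}, Theorem~4.1 (with the remark immediately after Theorem~\ref{MR} that the one-dimensional argument there extends verbatim to the coordinate-decoupled system). So there is no in-paper proof to compare against; your sketch is, in fact, precisely the route taken in \cite{Pa-Ra/98}: Markov representation plus It\^o's formula on $Y^{t,x,i}-\theta(s,X_{s}^{t,x})$ for a $C^{1,2}$ test function realizing the jet, Girsanov linearization of $F_{i}$ in the $z$-variable via the Lipschitz bound~(\ref{h3-0}-$ii$), the subdifferential inclusion $U_{s}^{t,x,i}\in[(\varphi_{i})_{-}^{\prime}(Y_{s}^{t,x,i}),(\varphi_{i})_{+}^{\prime}(Y_{s}^{t,x,i})]$, and the one-sided semicontinuity of $(\varphi_{i})_{\pm}^{\prime}$ to make the drift sign persist on a short random interval. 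Your handling of the multivalued term is correct: lower semicontinuity of $(\varphi_{i})_{-}^{\prime}$ together with continuity of $u_{i}$ indeed gives a deterministic $\delta$-neighbourhood of $u_{i}(t,x)$ on which $(\varphi_{i})_{-}^{\prime}(\cdot)>(\varphi_{i})_{-}^{\prime}(u_{i}(t,x))-\varepsilon$, so shrinking $\mathcal{V}$ accordingly makes the drift strictly positive on $\{(s,X_{s}^{t,x})\in\mathcal{V}\}$, which is all the contradiction needs.

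One small correction: you cannot literally invoke Theorem~\ref{th:main} for the continuity of $u$, since that theorem is proved under the standing assumption that $\overline{D}$ is bounded (used e.g.\ in~(\ref{bk}) and in the uniform constants $C_{1},\dots,C_{4}$). In the present setting $D=\mathbb{R}^{d}$, $A^{t,x}\equiv0$, so the continuity is indeed much easier and follows from the classical $L^{p}$-continuity of $(t,x)\mapsto X^{t,x}$ under~(\ref{sp-30-ip}) together with the standard BSDE stability estimate (Lemma~\ref{An_L1} with $dA\equiv0$); but this is a separate (and simpler) argument from Theorem~\ref{th:main}, not an application of it.
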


We highlight that in \cite{Pa-Ra/98} and \cite{ma-ra/10} the results are given
for $m=1,$ but with the same proof the results hold too for the
quasi-decoupled system (\ref{pvi1}).\medskip

Consider now the parabolic multivalued system (\ref{ch5-pvi-1}) with
${D=}\mathbb{R}^{d}$ and $F$ independent of the last argument $w$ that is
$F\left(  t,x,y,w\right)  \equiv F\left(  t,x,y\right)  \in\mathbb{R}^{m}$ for
all $\left(  t,x,y,w\right)  \in\left[  0,T\right]  \times\mathbb{R}^{d}%
\times\mathbb{R}^{m}\times\mathbb{R}^{m\times m}:$
\begin{equation}
\left\{
\begin{array}
[c]{l}%
-\dfrac{\partial u(t,x)}{\partial t}-\mathcal{L}_{t}u\left(  t,x\right)
+{\partial\varphi}\big(u(t,x)\big)\ni F\big(t,x,u(t,x)\big),\smallskip\\
\multicolumn{1}{r}{t\in\left(  0,T\right)  ,\;x\in\mathbb{R}^{d},\medskip}\\
u(T,x)=\kappa(x),\;\ x\in\mathbb{R}^{d},
\end{array}
\right.  \label{pvi3}%
\end{equation}
Let $z\in\mathbb{R}^{m}$ and $\Phi_{z}:\left[  0,T\right]  \times
\mathbb{R}^{d}\times\mathbb{R}^{m}\times\mathbb{R}^{d}\times\mathbb{S}%
^{d}\rightarrow\mathbb{R}$%
\[
\Phi_{z}\left(  t,x,y,q,X\right)  =\dfrac{1}{2}\mathrm{Tr}\big((gg^{\ast
})(t,x)X\big)+\langle q,f(t,x)\rangle+\left\langle F\left(  t,x,y\right)
,z\right\rangle
\]

\begin{theorem}
\label{MPRZ}(Maticiuc, Pardoux, R\u{a}\c{s}canu, Zalinescu
\cite{Ma/Pa/Ra/Za:09}: Theorem 6, Theorem 14) The continuous function
$u:\left[  0,T\right]  \times\overline{{D}}\rightarrow\mathbb{R}^{m}$ defined
by (\ref{ch5-pvi-6}) is a viscosity solution of the parabolic differential
system (\ref{pvi3}) i.e.
\[
\left\vert \text{\ }%
\begin{array}
[c]{l}%
u(T,x)=\kappa\left(  x\right)  ,\;\forall~x\in\mathbb{R}^{d},\medskip\\
u(t,x)\in Dom\left(  \varphi\right)  ,\ \ \forall{(t,x)}\in(0,T)\times
\mathbb{R}^{d},\medskip
\end{array}
\right.
\]
\newline and
\begin{equation}%
\begin{array}
[c]{c}%
\text{for any }\left(  t,x\right)  \in(0,T)\times\mathbb{R}^{d}\text{, any
}z\in\mathbb{R}^{m}\text{, any }(p,q,X)\in\mathcal{P}^{2,+}\left\langle
u(t,x),z\right\rangle :\medskip\\
p+\Phi_{z}\left(  t,x,u\left(  t,x\right)  ,q,X\right)  \geq\varphi
_{-}^{\prime}\big(u\left(  t,x\right)  ,z\big).
\end{array}
\label{vs-pvi3}%
\end{equation}

\end{theorem}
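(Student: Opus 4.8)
The plan is to realize $u$ as the locally uniform limit of viscosity solutions of penalized (single‑valued) problems and to transfer the one‑sided inequality \eqref{vs-pvi3} through that limit, the terminal condition and the membership $u(t,x)\in Dom(\varphi)$ being disposed of first by soft arguments. The identity $u(T,x)=\kappa(x)$ is immediate from \eqref{ch5-pvi-6}, since $Y^{T,x}_{T}=\kappa(X^{T,x}_{T})=\kappa(x)$. For $(t,x)\in(0,T)\times\mathbb{R}^{d}$ I would deduce $u(t,x)\in Dom(\varphi)$ from the a priori estimate \eqref{ch5-pvi-4} (here $A\equiv0$, $G=0$, $\psi=0$): it gives $\mathbb{E}\int_{t}^{T}[\varphi(Y^{t,x}_{r})-\varphi(u_{0})]\,dr<\infty$, hence $Y^{t,x}_{r}\in Dom(\varphi)$ for a.e.\ $r$; combined with the continuity of $r\mapsto Y^{t,x}_{r}$, the lower semicontinuity of $\varphi$ and a Fatou argument along a suitable sequence $r_{n}\downarrow t$, this upgrades to $Y^{t,x}_{t}=u(t,x)\in Dom(\varphi)$.

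For the viscosity inequality, I would fix $\varepsilon>0$ and let $(Y^{\varepsilon;t,x},Z^{\varepsilon;t,x})$ solve the BSDE obtained from \eqref{ch5-pvi-3} by replacing $\partial\varphi$ with its Yosida approximation $\nabla\varphi_{\varepsilon}$, and set $u^{\varepsilon}(t,x):=Y^{\varepsilon;t,x}_{t}$. This is a Markovian BSDE with a monotone, continuous, linear‑growth generator $F(r,x,y)-\nabla\varphi_{\varepsilon}(y)$, so by the Pardoux--Peng type representation (see \cite{pa-ra/14}) together with the scalarization device of \cite{Ma/Pa/Ra/Za:09}, $u^{\varepsilon}$ is continuous and, for every $z\in\mathbb{R}^{m}$ and every $(p,q,X)\in\mathcal{P}^{2,+}\langle u^{\varepsilon}(t,x),z\rangle$,
\[
p+\Phi_{z}\bigl(t,x,u^{\varepsilon}(t,x),q,X\bigr)\ \geq\ \bigl\langle\nabla\varphi_{\varepsilon}\bigl(u^{\varepsilon}(t,x)\bigr),z\bigr\rangle .
\]

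Next, writing the equation for $Y^{t,x}-Y^{\varepsilon;t,x}$ and using the monotonicity of $\partial\varphi$ and of $\nabla\varphi_{\varepsilon}$ together with the uniform bound \eqref{ch5-pvi-5} on $\int_{0}^{T}|U^{t,x}_{r}|^{2}dr$, one gets $\mathbb{E}\sup_{r}|Y^{t,x}_{r}-Y^{\varepsilon;t,x}_{r}|^{2}\to0$ as $\varepsilon\to0$, uniformly for $(t,x)$ in compacts, hence $u^{\varepsilon}\to u$ locally uniformly. Fix now $(t_{0},x_{0})\in(0,T)\times\mathbb{R}^{d}$, $z\in\mathbb{R}^{m}$ and $(p,q,X)\in\mathcal{P}^{2,+}\langle u(t_{0},x_{0}),z\rangle$; by the Crandall--Ishii stability lemma for semi‑jets there are $(t_{\varepsilon},x_{\varepsilon})\to(t_{0},x_{0})$ and $(p_{\varepsilon},q_{\varepsilon},X_{\varepsilon})\in\mathcal{P}^{2,+}\langle u^{\varepsilon}(t_{\varepsilon},x_{\varepsilon}),z\rangle$ with $(p_{\varepsilon},q_{\varepsilon},X_{\varepsilon})\to(p,q,X)$ and $u^{\varepsilon}(t_{\varepsilon},x_{\varepsilon})\to u(t_{0},x_{0})$. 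Inserting these into the displayed inequality and letting $\varepsilon\to0$, the left side tends to $p+\Phi_{z}(t_{0},x_{0},u(t_{0},x_{0}),q,X)$ by continuity of $\Phi_{z}$, so everything reduces to the lower bound $\liminf_{\varepsilon\to0}\langle\nabla\varphi_{\varepsilon}(u^{\varepsilon}(t_{\varepsilon},x_{\varepsilon})),z\rangle\geq\varphi_{-}^{\prime}(u(t_{0},x_{0}),z)$. This is the main obstacle. Writing $y_{\varepsilon}:=u^{\varepsilon}(t_{\varepsilon},x_{\varepsilon})\to y_{0}:=u(t_{0},x_{0})\in Dom(\varphi)$, convexity of the $C^{1}$ function $\varphi_{\varepsilon}$ gives $\langle\nabla\varphi_{\varepsilon}(y_{\varepsilon}),z\rangle\geq\delta^{-1}[\varphi_{\varepsilon}(y_{\varepsilon})-\varphi_{\varepsilon}(y_{\varepsilon}-\delta z)]$ for every $\delta>0$; since $\varphi_{\varepsilon}\nearrow\varphi$ with each $\varphi_{\varepsilon}$ continuous one has $\liminf_{\varepsilon}\varphi_{\varepsilon}(y_{\varepsilon})\geq\varphi(y_{0})$, and passing to the limit in $\varepsilon$ and then letting $\delta\downarrow0$ yields the claim, since $\varphi_{-}^{\prime}(y_{0},z)=\sup_{\delta>0}\delta^{-1}[\varphi(y_{0})-\varphi(y_{0}-\delta z)]$. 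The genuinely delicate point is the behaviour of $\varphi_{\varepsilon}(y_{\varepsilon}-\delta z)$ when $y_{0}$ lies on the boundary of $Dom(\varphi)$: one has to isolate the trivial case $\varphi_{-}^{\prime}(y_{0},z)=-\infty$ and otherwise exploit carefully the monotone structure of the approximation together with the hypotheses \eqref{h4-0} and the a priori control on $U^{t,x}$.

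As an alternative route avoiding penalization, one can argue directly: suppose the inequality fails, choose $\Psi\in C^{1,2}$ with $\langle u(\cdot,\cdot),z\rangle\leq\Psi$ near $(t_{0},x_{0})$ and $(\partial_{t}\Psi,\nabla\Psi,D^{2}\Psi)(t_{0},x_{0})=(p,q,X)$, apply It\^o to $\Psi(s,X^{t_{0},x_{0}}_{s})$, and use $\langle Y^{t_{0},x_{0}}_{s},z\rangle=\langle u(s,X^{t_{0},x_{0}}_{s}),z\rangle\leq\Psi(s,X^{t_{0},x_{0}}_{s})$ with equality at $s=t_{0}$, the dynamics $d\langle Y^{t_{0},x_{0}}_{s},z\rangle=[\langle U^{t_{0},x_{0}}_{s},z\rangle-\langle F(s,X^{t_{0},x_{0}}_{s},Y^{t_{0},x_{0}}_{s}),z\rangle]\,ds+\langle(Z^{t_{0},x_{0}}_{s})^{\ast}z,dB_{s}\rangle$ with $U^{t_{0},x_{0}}_{s}\in\partial\varphi(Y^{t_{0},x_{0}}_{s})$, so that $\langle U^{t_{0},x_{0}}_{s},z\rangle\geq\varphi_{-}^{\prime}(Y^{t_{0},x_{0}}_{s},z)$, and the lower semicontinuity of $y\mapsto\varphi_{-}^{\prime}(y,z)$ on $Dom(\varphi)$; dividing by the time increment, sending it to $0$ and localizing by the exit time of $X^{t_{0},x_{0}}$ from a small ball around $x_{0}$ then produces a contradiction. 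In either route, controlling the subdifferential/directional‑derivative term uniformly near $(t_{0},x_{0})$ — and in particular its behaviour at points of the boundary of $Dom(\varphi)$ — is the one ingredient that is not routine.
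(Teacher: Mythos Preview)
The paper does not prove this theorem at all: Section~\ref{s4} is explicitly a recollection of known results, and Theorem~\ref{MPRZ} is simply quoted from \cite{Ma/Pa/Ra/Za:09} (Theorems~6 and~14) with no argument given. So there is no ``paper's own proof'' to compare against.

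That said, your sketch is in the spirit of the cited reference: penalize $\partial\varphi$ by its Yosida approximation, use the standard Feynman--Kac result for the regularized BSDE to get a viscosity inequality for $u^{\varepsilon}$, show $u^{\varepsilon}\to u$ locally uniformly, and pass to the limit via stability of semi-jets. Your treatment of the terminal condition and of $u(t,x)\in Dom(\varphi)$ is fine. The one point you yourself flag as ``not routine'' --- the lower bound $\liminf_{\varepsilon}\langle\nabla\varphi_{\varepsilon}(y_{\varepsilon}),z\rangle\ge\varphi'_{-}(y_{0},z)$ --- is indeed the whole crux, and your difference-quotient argument does not close it as written: from $\varphi_{\varepsilon}\le\varphi$ and lower semicontinuity of $\varphi$ you cannot bound $\limsup_{\varepsilon}\varphi_{\varepsilon}(y_{\varepsilon}-\delta z)$ by $\varphi(y_{0}-\delta z)$ without an extra ingredient (e.g.\ replacing $y_{0}-\delta z$ by a nearby point in $Dom(\varphi)$ and using continuity of $\varphi$ on line segments inside its domain, or working instead with the conjugate/support-function description $\varphi'_{-}(y_{0},z)=\inf\{\langle\hat y,z\rangle:\hat y\in\partial\varphi(y_{0})\}$ together with a bound on $\nabla\varphi_{\varepsilon}(y_{\varepsilon})$). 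Your alternative direct route via It\^{o}'s formula is also standard and workable, but there too the step that needs care is precisely the lower semicontinuity of $y\mapsto\varphi'_{-}(y,z)$, which fails in general at boundary points of $Dom(\varphi)$; in \cite{Ma/Pa/Ra/Za:09} this is handled by the penalization route rather than directly.
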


We remark that

$\left(  r_{1}\right)  $ $\quad$the condition (\ref{vs-pvi3}) is equivalent
to:%
\[%
\begin{array}
[c]{c}%
\text{for any }\left(  t,x\right)  \in(0,T)\times\mathbb{R}^{d}\text{, any
}z\in\mathbb{R}^{m}\text{, any }(p,q,X)\in\mathcal{P}^{2,-}\left\langle
u(t,x),z\right\rangle :\medskip\\
p+\Phi_{z}\left(  t,x,u\left(  t,x\right)  ,q,X\right)  \leq\varphi
_{+}^{\prime}\big(u\left(  t,x\right)  ,z\big).
\end{array}
\]
\newline

$\left(  r_{2}\right)  $ $\quad$in one dimensional case $(m=1)$ condition
(\ref{vs-pvi3}) means the sub-solution for $z>0$ and a super-solution for
$z<0.$\medskip$\smallskip$

We highlight that in supplementary assumptions the uniqueness of the viscosity
solutions holds too in each case presented here above in this subsection.
Moreover the uniqueness of the viscosity solution of the parabolic variational
inequality (\ref{pvi3}) holds in a larger class of functions $u$ (a weaker
inequality (\ref{vs-pvi3})).

\subsection{Elliptic PDEs}

Assume the hypotheses from Sections \ref{s1} and \ref{s2} are satisfied and
moreover $f,g,F$ and $G$ are independent of time argument, $\kappa=0$,
$\varphi=\psi\equiv0,$ $u_{0}=0$ and $F_{i}$ the $i-$th coordinate of $F$,
depends only on the $i-$th row of the matrix $Z$.

If $h:\overline{D}\rightarrow\mathbb{R}$ is a continuous function, then a pair
$(q,X)\in\mathbb{R}^{d}\times\mathbb{S}^{d}$ is a elliptic super-jet to $h$,
at $x\in\overline{D},$ if for all $x^{\prime}\in\overline{D},$%
\[
h(x^{\prime})\leq h(x)+\langle q,x^{\prime}-x\rangle+\dfrac{1}{2}\langle
X(x^{\prime}-x),x^{\prime}-x\rangle+o(|x^{\prime}-x|^{2});
\]
The set of elliptic super-jets at $x$ is denoted by $\mathcal{P}^{2,+}h(x);$
the set of elliptic sub-jets is defined by $\mathcal{P}_{\mathcal{O}}%
^{2,-}h=-\mathcal{P}_{\mathcal{O}}^{2,+}(-h)$.

Consider the semi-linear elliptic partial differential system with
nonlinear Robin boundary condition:%
\begin{equation}
\left\{
\begin{array}
[c]{r}%
-\mathcal{L}u_{i}\left(  x\right)  =F_{i}(x,u\left(  x\right)  ,\left(  \nabla
u_{i}(x)\right)  ^{\ast}g(x)),\quad\ x\in D,\quad i\in\overline{1,m}%
,\medskip\\
\multicolumn{1}{l}{\dfrac{\partial u_{i}}{\partial n}(x)=G_{i}%
(x,u(x)),\ \,x\in Bd\left(  \overline{D}\right)  ,\quad i\in\overline{1,m}.}%
\end{array}
\right.  \label{epds}%
\end{equation}
where%
\[
\mathcal{L}u_{i}\left(  x\right)  =\dfrac{1}{2}%
{\displaystyle\sum\limits_{j,l=1}^{d}}
\left(  gg^{\ast}\right)  _{j,l}(t,x)\dfrac{\partial^{2}u_{i}(x)}{\partial
x_{j}\partial x_{l}}+%
{\displaystyle\sum\limits_{j=1}^{d}}
f_{j}\left(  t,x\right)  \dfrac{\partial u_{i}(x)}{\partial x_{j}}.
\]
Define $\Phi_{i}$ and $\Gamma_{i}$ as in (\ref{figi}).

\begin{proposition}
(E. Pardoux, S. Zhang \cite{pa-zh/98}: Theorem 5.3) The continuous function
$x\longmapsto u\left(  x\right)  :\overline{{D}}\rightarrow\mathbb{R}^{m}$
given by (\ref{el-cont}) is a viscosity solution of the elliptic partial
differential system (\ref{epds}) i.e.:\newline and $u$ is a viscosity
sub-solution that is, for any $i\in\overline{1,m}$ :%
\[%
\begin{array}
[c]{ll}%
\left(  a\right)  \quad & \Phi_{i}\left(  x,u\left(  x\right)  ,q,X\right)
\geq0,\;\;\text{for any }x\in\overline{{D}}\text{, any }(q,X)\in
\mathcal{P}^{2,+}u_{i}(x),\medskip\\
\left(  b\right)  \quad & \Phi_{i}\left(  x,u\left(  x\right)  ,q,X\right)
\vee\Gamma_{i}\left(  x,u\left(  x\right)  ,q\right)  \geq0\smallskip\\
& \quad\quad\quad\quad\quad\quad\text{for any }x\in Bd\left(  \overline{{D}%
}\right)  \text{, any }(q,X)\in\mathcal{P}^{2,+}u_{i}(x),
\end{array}
\]
together with $u$ is a viscosity super-solution that is, for any
$i\in\overline{1,m}$ :%
\[%
\begin{array}
[c]{ll}%
\left(  c\right)  \quad & \Phi_{i}\left(  x,u\left(  x\right)  ,q,X\right)
\leq0,\;\;\text{for any }x\in\overline{{D}}\text{, any }(q,X)\in
\mathcal{P}^{2,-}u_{i}(x),\medskip\\
\left(  d\right)  \quad & \Phi_{i}\left(  x,u\left(  x\right)  ,q,X\right)
\wedge\Gamma_{i}\left(  x,u\left(  x\right)  ,q\right)  \leq0\smallskip\\
& \quad\quad\quad\quad\quad\quad\text{for any }x\in Bd\left(  \overline{{D}%
}\right)  \text{, any }(q,X)\in\mathcal{P}^{2,-}u_{i}(x),
\end{array}
\]

\end{proposition}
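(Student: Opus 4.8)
The plan is to deduce the elliptic viscosity property from the parabolic result of Theorem~\ref{PZ}, via a time-homogeneity argument together with the stability of viscosity solutions under locally uniform limits; this is the elliptic counterpart of the way Proposition~\ref{el-p} was obtained from Theorem~\ref{th:main}. For each $n\in\mathbb{N}$ let $u_{n}:[0,n]\times\overline{D}\rightarrow\mathbb{R}^{m}$ be the function $u_{n}(t,x)=Y^{t,x}_{t}$ attached, as in~(\ref{ch5-pvi-6}), to the finite-horizon forward--backward system on $[0,n]$ with $\kappa=0$ and $\varphi=\psi\equiv 0$. By Theorem~\ref{PZ} applied with $T=n$, $u_{n}$ is a continuous viscosity solution of the parabolic system~(\ref{pvi1}) on $[0,n]\times\overline{D}$, with the nonlinearities $\Phi_{i},\Gamma_{i}$ of~(\ref{figi}); since $f,g,F,G$ do not depend on time, these $\Phi_{i},\Gamma_{i}$ are exactly the ones appearing in the elliptic system~(\ref{epds}).

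First I would use time-homogeneity of the coefficients to identify $u_{n}(t,x)=Y^{x;\,n-t}_{0}$ for every $t\in[0,n]$: since $A^{t,x}_{t}=0$ and the law of $(X^{t,x}_{t+\cdot},A^{t,x}_{t+\cdot})$ equals that of $(X^{0,x}_{\cdot},A^{0,x}_{\cdot})$, the deterministic number $Y^{t,x}_{t}$ for horizon $n$ coincides with $Y^{0,x}_{0}$ for horizon $n-t$, i.e. with $Y^{x;\,n-t}_{0}$ in the notation of the infinite-horizon section. Combining this with the estimate already proved inside Proposition~\ref{el-p}, namely $|Y^{x;m}_{0}-Y^{x}_{0}|\le |\lambda|^{-1/2}\sqrt{C}\;e^{\lambda m}$ uniformly in $x\in\overline{D}$ (recall $\lambda<0$), we get for every fixed $T<\infty$ and all $n>T$ that $\sup_{(t,x)\in[0,T]\times\overline{D}}|u_{n}(t,x)-u(x)|\le |\lambda|^{-1/2}\sqrt{C}\;e^{\lambda(n-T)}\to 0$. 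Hence $u_{n}\to w$ locally uniformly on $[0,\infty)\times\overline{D}$, where $w(t,x):=u(x)$ is time-independent.

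By the classical stability of viscosity sub- and super-solutions under locally uniform convergence --- the perturbed-test-function argument goes through up to and including $Bd(\overline{D})$ because $\Phi_{i},\Gamma_{i}$ are continuous and the relations defining sub-/super-solution, $\vee$ and $\wedge$ included, are closed --- $w$ is a viscosity solution of~(\ref{pvi1}) on $(0,\infty)\times\overline{D}$. It remains to read off the elliptic inequalities. If $(p,q,X)\in\mathcal{P}^{2,+}w_{i}(t,x)$, then putting $x'=x$ in~(\ref{sup-jet}) and letting $s\to t$ from each side forces $p=0$, and the surviving part of~(\ref{sup-jet}) says exactly $(q,X)\in\mathcal{P}^{2,+}u_{i}(x)$; conversely each $(q,X)\in\mathcal{P}^{2,+}u_{i}(x)$ gives $(0,q,X)\in\mathcal{P}^{2,+}w_{i}(t,x)$. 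Since $\Phi_{i}(t,x,\cdot)=\Phi_{i}(x,\cdot)$ and $\Gamma_{i}(t,x,\cdot)=\Gamma_{i}(x,\cdot)$, the parabolic sub-solution inequalities $p+\Phi_{i}\ge 0$ on $\overline{D}$ and $[p+\Phi_{i}]\vee\Gamma_{i}\ge 0$ on $Bd(\overline{D})$ become precisely (a) and (b), and symmetrically (working with $\mathcal{P}^{2,-}$) the super-solution inequalities become (c) and (d); continuity of $u$ is Proposition~\ref{el-p}.

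The step demanding the most care is coupling the identification $u_{n}(t,x)=Y^{x;\,n-t}_{0}$ with the stability argument: one must verify that the convergence is genuinely uniform on compact time--space sets --- which it is, thanks to the rate $e^{\lambda(n-T)}$ coming from $\lambda<0$ --- so that the standard perturbation of test functions at nearby points is legitimate near $Bd(\overline{D})$. A self-contained alternative, essentially Pardoux and Zhang's original argument, would be: prove the Markov identity $Y^{x}_{s}=u(X^{x}_{s})$ from uniqueness of~(\ref{bsde-el1}) and the flow property; then for $\chi\in C^{2}$ with $u-\chi$ attaining a local extremum at $x_{0}$, apply It\^{o}'s formula to $\chi(X^{x}_{\cdot})$ on $[0,\tau_{\varepsilon}\wedge h]$ with $\tau_{\varepsilon}$ the exit time of a small ball around $x_{0}$, subtract from~(\ref{bsde-el1}) on the same interval, divide by $h$, and let $h\downarrow 0$ then $\varepsilon\downarrow 0$. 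For $x_{0}\in D$ the reflection term $-\int_{0}^{\tau_{\varepsilon}\wedge h}\langle\nabla\chi(X^{x}_{r}),\nabla\phi(X^{x}_{r})\rangle\,dA^{x}_{r}$ vanishes once $\varepsilon$ is small (as $A^{x}$ does not increase off $Bd(\overline{D})$) and one recovers (a), (c); for $x_{0}\in Bd(\overline{D})$ it survives and produces the alternatives in (b), (d).
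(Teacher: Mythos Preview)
The paper does not actually prove this proposition: it is stated as a citation of Theorem~5.3 from Pardoux--Zhang \cite{pa-zh/98}, and no proof environment follows it. The paper's contribution here is only to supply the continuity of $x\mapsto u(x)$ (Proposition~\ref{el-p}), which is a prerequisite for the statement to make sense; the viscosity-solution property itself is taken for granted from \cite{pa-zh/98}.

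Your argument is nonetheless a correct and self-contained derivation. The reduction to the parabolic result via time-homogeneity $u_{n}(t,x)=Y^{x;\,n-t}_{0}$, the uniform rate $e^{\lambda(n-T)}$ from the proof of Proposition~\ref{el-p}, and stability of viscosity solutions under locally uniform limits all go through as you describe; the identification of parabolic jets of a time-independent function with elliptic jets (forcing $p=0$) is the right way to pass from the parabolic inequalities of Theorem~\ref{PZ} to (a)--(d). The alternative you sketch at the end---It\^{o}'s formula applied to a test function on $[0,\tau_{\varepsilon}\wedge h]$, together with the Markov identity $Y^{x}_{s}=u(X^{x}_{s})$---is essentially the original method in \cite{pa-zh/98}. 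Your stability route has the advantage of reusing Theorem~\ref{PZ} wholesale rather than repeating the local It\^{o} computation, at the cost of invoking a general stability theorem for the nonlinear Neumann boundary formulation; both are legitimate.
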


\section{Annex}

\subsection{Convex functions}

Let $\varphi:\mathbb{R}^{m}\rightarrow]-\infty,+\infty]$\ be a proper convex
lower semicontinuous function. We denote $\mathrm{Dom}\left(  \varphi\right)
=\left\{  y\in\mathbb{R}^{m}:\varphi\left(  y\right)  <\infty\right\}  ;$
$\varphi$ is a proper function if $\mathrm{Dom}\left(  \varphi\right)
\neq\emptyset.$

The subdifferential (multivalued) operator $\partial\varphi$ is defined by%
\[
\partial\varphi\left(  y\right)  :=\left\{  \hat{y}\in\mathbb{R}%
^{m}:\left\langle \hat{y},v-y\right\rangle +\varphi\left(  y\right)
\leq\varphi\left(  v\right)  ,\;\forall~v\in\mathbb{R}^{m}\right\}  ;
\]
$\partial\varphi:\mathbb{R}^{m}\rightrightarrows\mathbb{R}^{m}$ is a maximal
monotone operator. We have%
\[
\mathrm{Dom}\left(  \partial\varphi\right)  \overset{def}{=}\left\{
y\in\mathbb{R}^{m}:\partial\varphi\left(  y\right)  \neq\emptyset\right\}
\subset\mathrm{Dom}\left(  \varphi\right)  .
\]
Recall that $\overline{\mathrm{Dom}\left(  \partial\varphi\right)  }%
=\overline{\mathrm{Dom}\left(  \varphi\right)  }$ and $int\left(
\mathrm{Dom}\left(  \partial\varphi\right)  \right)  =int\left(
\mathrm{Dom}\left(  \varphi\right)  \right)  .$

For all $y\in\mathrm{Dom}\left(  \varphi\right)  $ and $z\in\mathbb{R}^{m}$ we
have%
\[
\varphi_{-}^{\prime}\left(  y,z\right)  \overset{def}{=}\lim_{t\nearrow
0}\uparrow\frac{\varphi\left(  y+tz\right)  -\varphi\left(  y\right)  }{t}%
\leq\lim_{t\searrow0}\downarrow\frac{\varphi\left(  y+tz\right)
-\varphi\left(  y\right)  }{t}\overset{def}{=}\varphi_{+}^{\prime}\left(
y,z\right)  .
\]
$\varphi_{-}^{\prime}\left(  y,z\right)  =-\varphi_{+}^{\prime}\left(
y,-z\right)  .$ Moreover%
\[%
\begin{array}
[c]{ccc}%
\hat{y}\in\partial\varphi\left(  y\right)   & \;\Longleftrightarrow\; &
\left\langle \hat{y},z\right\rangle \geq\varphi_{-}^{\prime}\left(
y,z\right)  ,\;\forall~z\in\mathbb{R}^{m},\\
& \;\Longleftrightarrow\; & \left\langle \hat{y},z\right\rangle \leq
\varphi_{+}^{\prime}\left(  y,z\right)  ,\;\forall~z\in\mathbb{R}^{m}.
\end{array}
\]
If $m=1$ we write $\varphi_{-}^{\prime}\left(  y\right)  =\varphi_{-}^{\prime
}\left(  y,1\right)  ,$  $\varphi_{+}^{\prime}\left(  y\right)  =\varphi
_{+}^{\prime}\left(  y,1\right)  $ and we have%
\[
\partial\varphi\left(  y\right)  =\left[  \varphi_{-}^{\prime}\left(
y\right)  ,\varphi_{+}^{\prime}\left(  y\right)  \right]  \cap\mathbb{R}.
\]
Let $\varepsilon>0$. The Moreau--Yosida regularization of $\varphi$ is the
function $\varphi_{\varepsilon}:\mathbb{R}^{m}\rightarrow\mathbb{R}$
\[
\varphi_{\varepsilon}\left(  y\right)  \overset{def}{=}\inf\left\{  \frac
{1}{2\varepsilon}\left\vert y-z\right\vert ^{2}+\varphi\left(  z\right)
:z\in\mathbb{R}^{m}\right\}  .
\]
We mention that $\varphi_{\varepsilon}$ is a $C^{1}$ convex function and (see
e.g. Pardoux \& R\u{a}\c{s}canu \cite{pa-ra/14}, Annex B) for all
$x,y\in\mathbb{R}^{m}$%
\begin{equation}%
\begin{array}
[c]{rl}%
(a)\quad & \varphi_{\varepsilon}\left(  x\right)  =\dfrac{\varepsilon}%
{2}\left\vert \nabla\varphi_{\varepsilon}(x)\right\vert ^{2}+\varphi\left(
x-\varepsilon\nabla\varphi_{\varepsilon}(x)\right)  ,\medskip\\
(b)\quad & \nabla\varphi_{\varepsilon}(x)=\partial\varphi_{\varepsilon}\left(
x\right)  \in\partial\varphi\left(  x-\varepsilon\nabla\varphi_{\varepsilon
}(x)\right)  ,\medskip\\
(c)\quad & \left\vert \nabla\varphi_{\varepsilon}(x)-\nabla\varphi
_{\varepsilon}(y)\right\vert \leq\dfrac{1}{\varepsilon}\left\vert
x-y\right\vert .
\end{array}
\label{ineq Yosida}%
\end{equation}

\subsection{A backward stochastic inequality}

From Proposition 6.80 (Annex C) in Pardoux \& R\u{a}\c{s}canu \cite{pa-ra/14}
we have

\begin{lemma}
\label{An_L1}\textit{ Let} $\left(  Y,Z\right)  \in S_{m}^{0}\times
\Lambda_{m\times k}^{0}$ \textit{satisfying}%
\[
Y_{t}=Y_{T}+\int_{t}^{T}d\mathcal{K}_{r}-\int_{t}^{T}Z_{r}dB_{r},\;0\leq t\leq
T,\quad\mathbb{P}-a.s.,
\]
where $\mathcal{K}\in S_{m}^{0}$ and $\mathcal{K}_{\cdot}\left(
\omega\right)  \in BV\left(  \left[  0,T\right]  ;\mathbb{R}^{m}\right)
,\;\mathbb{P}-a.s.\;\omega\in\Omega.\smallskip$\newline\textit{Assume be
given\newline}$%
\begin{array}
[c]{c}%
\mathit{\blacktriangle}\quad
\end{array}
L$ is a non-decreasing stochastic process\textit{, }$L_{0}=0,$ \newline$%
\begin{array}
[c]{c}%
\mathit{\blacktriangle}\quad
\end{array}
R$ is a stochastic process\textit{, }$R_{0}=0$ and $R_{\cdot}\left(
\omega\right)  \in BV\left(  \left[  0,T\right]  ;\mathbb{R}^{m}\right)  $,
$\mathbb{P}-a.s.\;\omega\in\Omega,\newline%
\begin{array}
[c]{c}%
\mathit{\blacktriangle}\quad
\end{array}
V$ \textit{a continuous stochastic process}, $V_{0}=0$, $V_{\cdot}\left(
\omega\right)  \in BV\left(  \left[  0,T\right]  ;\mathbb{R}^{m}\right)
,\mathbb{P}-a.s.\;\omega\in\Omega,$ and%
\[
\mathbb{E~}\left(
{\displaystyle\int_{0}^{T}}
e^{2V_{r}}dR_{r}\right)  ^{-}<\infty
\]
\textit{If} $a<1$ and%
\begin{equation}%
\begin{array}
[c]{rl}%
\left(  i\right)  \quad &
\begin{array}
[c]{l}%
\left\langle Y_{r},d\mathcal{K}_{r}\right\rangle \leq\dfrac{a}{2}\left\vert
Z_{r}\right\vert ^{2}dr+\left(  |Y_{r}|^{2}dV_{r}+|Y_{r}|dL_{r}+dR_{r}\right)
\\
\quad\quad\quad\quad\quad\quad\quad\quad\quad\quad\quad\quad\text{as measures
on }\left[  0,T\right]  ,\medskip
\end{array}
\\
\left(  ii\right)  \quad & \mathbb{E~}\sup\limits_{r\in\left[  \tau
,\sigma\right]  }e^{2V_{r}}\left\vert Y_{r}\right\vert ^{2}<\infty,
\end{array}
\label{AnC-bsde-ip1}%
\end{equation}
\textit{then there exists a positive constant} $C_{a}$ , \textit{depending
only} $a,$ \textit{such that}%
\begin{equation}%
\begin{array}
[c]{l}%
\mathbb{E~}\left(  \sup\limits_{r\in\left[  0,T\right]  }\left\vert e^{V_{r}%
}Y_{r}\right\vert ^{2}\right)  +\mathbb{E~}\left(
{\displaystyle\int_{0}^{T}}
e^{2V_{r}}\left\vert Z_{r}\right\vert ^{2}dr\right)  \medskip\\
\leq C_{a}~\mathbb{E}\left[  \left\vert e^{V_{T}}Y_{T}\right\vert ^{2}+\left(
%
{\displaystyle\int_{0}^{T}}
e^{V_{r}}dL_{s}\right)  ^{2}+%
{\displaystyle\int_{0}^{T}}
e^{2V_{r}}dR_{r}\right]  .
\end{array}
\label{anexC-bsde-b2}%
\end{equation}

\end{lemma}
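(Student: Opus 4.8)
The plan is to run the classical a priori estimate for backward equations, the key observation being that $V$ plays the role of a pathwise (random) Gronwall exponent and that the structural hypothesis \eqref{AnC-bsde-ip1}(i) is designed precisely so that the $dV$ contributions cancel after an application of It\^o's formula. First I would apply It\^o's formula to $r\mapsto e^{2V_r}|Y_r|^2$ on $[t,T]$. Since $V$ is continuous and of finite variation, $d\bigl(e^{2V_r}\bigr)=2e^{2V_r}\,dV_r$ and there is no cross-variation term, so, using $dY_r=-d\mathcal{K}_r+Z_r\,dB_r$,
\[
e^{2V_t}|Y_t|^2+\int_t^T e^{2V_r}|Z_r|^2\,dr
= e^{2V_T}|Y_T|^2-2\int_t^T e^{2V_r}|Y_r|^2\,dV_r+2\int_t^T e^{2V_r}\langle Y_r,d\mathcal{K}_r\rangle-2\int_t^T e^{2V_r}\langle Y_r,Z_r\,dB_r\rangle .
\]
Estimating $\langle Y_r,d\mathcal{K}_r\rangle$ by \eqref{AnC-bsde-ip1}(i) produces a term $+2\int_t^T e^{2V_r}|Y_r|^2\,dV_r$ that exactly cancels the one above, a term $a\int_t^T e^{2V_r}|Z_r|^2\,dr$ that moves to the left-hand side, and the remaining $2\int_t^T e^{2V_r}|Y_r|\,dL_r+2\int_t^T e^{2V_r}\,dR_r$. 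Since $1-a>0$ this yields, for every $t\in[0,T]$,
\[
e^{2V_t}|Y_t|^2+(1-a)\int_t^T e^{2V_r}|Z_r|^2\,dr\;\le\; e^{2V_T}|Y_T|^2+2\int_t^T e^{2V_r}|Y_r|\,dL_r+2\int_t^T e^{2V_r}\,dR_r-2\int_t^T e^{2V_r}\langle Y_r,Z_r\,dB_r\rangle ,
\]
which is the inequality driving everything that follows.

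Next I would make the stochastic integral amenable to taking expectations by a localization argument. Along stopping times $\tau_N\uparrow T$ reducing the continuous local martingale $\int_0^{\cdot}e^{2V_r}\langle Y_r,Z_r\,dB_r\rangle$ (such times exist because $\int_0^T e^{2V_r}|Z_r|^2\,dr<\infty$ a.s. and, by $(ii)$, $\sup_r e^{2V_r}|Y_r|^2<\infty$), the It\^o identity on $[0,\tau_N]$ combined with \eqref{AnC-bsde-ip1}(i) gives, after taking expectations,
\[
(1-a)\,\mathbb{E}\int_0^{\tau_N} e^{2V_r}|Z_r|^2\,dr\;\le\;\mathbb{E}\,e^{2V_{\tau_N}}|Y_{\tau_N}|^2+2\,\mathbb{E}\int_0^{\tau_N} e^{2V_r}|Y_r|\,dL_r+2\,\mathbb{E}\int_0^{\tau_N} e^{2V_r}\,dR_r .
\]
Bounding $e^{2V_{\tau_N}}|Y_{\tau_N}|^2\le\sup_r e^{2V_r}|Y_r|^2$, bounding the $dL$ term via $e^{2V_r}|Y_r|\,dL_r\le\bigl(\sup_s e^{V_s}|Y_s|\bigr)e^{V_r}\,dL_r$ and Young's inequality, and using the lower-bound hypothesis on $\int_0^T e^{2V_r}\,dR_r$, one obtains (unless the right-hand side of \eqref{anexC-bsde-b2} is infinite, in which case there is nothing to prove) a finite bound for $\mathbb{E}\int_0^T e^{2V_r}|Z_r|^2\,dr$ after letting $N\to\infty$ by monotone convergence.

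Then I would take the supremum over $t\in[0,T]$ in the driving inequality \emph{before} taking expectations, split $\int_t^T=\int_0^T-\int_0^t$, and control the martingale part by the Burkholder--Davis--Gundy inequality,
\[
\mathbb{E}\sup_{t\in[0,T]}\Big|\int_0^t e^{2V_r}\langle Y_r,Z_r\,dB_r\rangle\Big|\le c\,\mathbb{E}\Big(\int_0^T e^{4V_r}|Y_r|^2|Z_r|^2\,dr\Big)^{1/2}\le c\,\mathbb{E}\Big[\sup_r\!\bigl(e^{V_r}|Y_r|\bigr)\Big(\int_0^T e^{2V_r}|Z_r|^2\,dr\Big)^{1/2}\Big].
\]
A further use of Young's inequality $2\alpha\beta\le\varepsilon\alpha^2+\varepsilon^{-1}\beta^2$ lets one absorb $\varepsilon\,\mathbb{E}\sup_r e^{2V_r}|Y_r|^2$ into the left-hand side (legitimate because this quantity is finite by $(ii)$) and reduces the $\int e^{2V}|Z|^2$ factor to the bound just obtained; the $|Y_r|\,dL_r$ term is absorbed the same way. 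Choosing $\varepsilon$ small depending only on $a$ and recombining the resulting estimates for $\mathbb{E}\sup_r e^{2V_r}|Y_r|^2$ and $\mathbb{E}\int_0^T e^{2V_r}|Z_r|^2\,dr$ gives \eqref{anexC-bsde-b2} with a constant $C_a$ depending only on $a$.

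The steps that require care are the localization together with the uniform-integrability argument that legitimizes dropping the mean of the stochastic integral and passing to the limit under only the $S_m^0\times\Lambda_{m\times k}^0$ integrability, which is exactly where $(ii)$ and $\mathbb{E}\bigl(\int_0^T e^{2V_r}\,dR_r\bigr)^-<\infty$ are used, and the bookkeeping of the constant $\varepsilon$ so that the $\sup_r e^{2V_r}|Y_r|^2$ term and the $\int_0^T e^{2V_r}|Z_r|^2\,dr$ term are absorbed simultaneously; both are routine once the driving inequality above is in hand.
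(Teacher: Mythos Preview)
Your argument is correct and is precisely the standard route---It\^o's formula for $e^{2V_r}|Y_r|^2$, cancellation of the $dV$ terms via hypothesis~(i), localization to kill the martingale term, then BDG plus Young to close---that underlies Proposition~6.80 of \cite{pa-ra/14}, which is what the paper invokes in lieu of a self-contained proof. The only point the paper adds is that, when defining the localizing stopping times, one should \emph{not} include a truncation of the $R$-integral (as is done in \cite{pa-ra/14} where $R$ is assumed non-decreasing); this is exactly what allows $R$ to be merely of bounded variation with $\mathbb{E}\bigl(\int_0^T e^{2V_r}\,dR_r\bigr)^-<\infty$, and your localization---which reduces only the local martingale $\int_0^\cdot e^{2V_r}\langle Y_r,Z_r\,dB_r\rangle$---already implements this modification correctly.
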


We remark that the proof of Lemma \ref{An_L1} follows the proof of Proposition
6.80 \cite{pa-ra/14}, with a single small change : in the definition of the
localization stopping time, we delete the term containing $R$, and therefore
we do not need to restrict us to the case where $R$ is  non-decreasing.

\section{Erratum\textit{ }}

In this paper we have corrected the proofs of continuity of the function $\left(
t,x\right)  \mapsto u\left(  t,x\right)  =Y_{t}^{t,x}$ from the papers
\cite{ma-ra/10} (Proposition 13 and Corollary 14) and \cite{pa-zh/98}
(Proposition 4.1 and Theorem 5.1).

\paragraph{Acknowledgement}
The work of A. R. was supported by the grant \textquotedblleft Deterministic and
stochastic systems with state constraints\textquotedblright, code 241/05.10.2011.

\end{document}